\theoremstyle{plain}
\newtheorem{theorem}{Theorem}[section]
\newtheorem{corollary}{Corollary}
\newtheorem{lemma}[theorem]{Lemma}
\newtheorem{proposition}{Proposition}
\theoremstyle{definition}
\newtheorem{remark}{Remark}
\def \dv {\mathrm{div}}
\title[The cost of approximate controllability]
      {The cost of approximate controllability of heat equation with general dynamical bounadary conditions}
\author{I. Boutaayamou}
\address{I. Boutaayamou, Ibn Zohr University, Polydisplinary faculty, Ouarzazate,  B.P. 638, 45000, Morocco}
\email{dsboutaayamou@gmail.com}	
\author{S. E. Chorfi}
\address{S. E. Chorfi, Cadi Ayyad University, Faculty of Sciences Semlalia, 2390, Marrakesh, Morocco}
\email{chorphi@gmail.com}	
\author{L. Maniar}
\address{L. Maniar, Cadi Ayyad University, Faculty of Sciences Semlalia, 2390, Marrakesh, Morocco}
\email{maniar@uca.ma}
\author{O. Oukdach}
\address{O. Oukdach, Cadi Ayyad University, Faculty of Sciences Semlalia, 2390, Marrakesh, Morocco}
\email{omar.oukdach@gmail.com}
\subjclass[2010]{Primary: 13C11; Secondary: 13B25.}
 \keywords{Cost of approximate controllability,  Carleman estimate, observability inequality, dynamic boundary conditions.}
\begin{document}
	\maketitle 
\begin{abstract}
	We consider the heat equation with dynamic bounary conditions  involving gradient terms in a bounded domain. In this paper we  study the cost  of approximate controllability for this equation. Combining new developed  Carleman
estimates and some optimization techniques,  we obtain explicit bounds of the minimal norm control. We consider the linear and the semilinear cases.
\end{abstract}
\section{Introduction}
Consider the heat equation with dynamic boundary conditions, more precisely we deal with the following system 
{\small
	\begin{equation}
	\left\{\begin{array}{ll}
	{\partial_t y-\text{div}(\mathcal{A}\nabla y)  +B(x,t)\cdot \nabla y+ a(x,t)y= 1_{\omega}v }& {\text { in } \Omega\times(0,T),} \\
	{\partial_t y_{_{\Gamma}}   - \text{div}_{\Gamma}(\mathcal{A}_{\Gamma}\nabla_{\Gamma} y_{\Gamma})    + \partial_{\nu}^{\mathcal{A}}y+B_{\Gamma}(x, t)\cdot \nabla_{\Gamma} y_{\Gamma} + b(x,t)y_{\Gamma}=0} &\,\,{\text {on} \,\Gamma\times(0,T),} \\
	y_{\Gamma}(x,t) = y\rvert_{\Gamma}(x,t) &\,\,\text{on } \Gamma\times(0,T), \\
	{(y(0),y_{\Gamma}(0)) =(y_0,y_{\Gamma,0})} & {\text { in } \Omega \times \Gamma} \label{1.1}.
	\end{array}\right.
	\end{equation}}
Here, $T>0$ is  a fixed control time, $\Omega$ a bounded open set of $\mathbb{R}^N$, $N \geq 2$, with smooth boundary $\Gamma :=\partial\Omega$, and the control region $\omega$ is an arbitrary nonempty open subset which is strictly contained in $\Omega$ i.e., $\omega\Subset\Omega$. To abridge the notations, we denote in the sequel  $\Omega_T:=\Omega\times(0, T)$, $\Gamma_T:=\Gamma\times(0, T)$ and $\omega_T:=\omega\times(0, T)$. 
By $y_{\Gamma}$, one denotes the trace of $y$, that is, $y_{\Gamma}=y_{|_{\Gamma}}$, and $y_0 \in L^2(\Omega)$ the initial data
in the bulk and $y_{\Gamma, 0} \in  L^2(\Gamma)$ on the
boundary. We emphasize that $y_{\Gamma,0}$
is not necessarily the trace of $y_{0}$, since we do not assume that $y_0$ has a trace, but if $y_0$ has a
well-defined trace on $\Gamma$, then the trace must coincide with  $y_{\Gamma, 0}$. 
$\nu$ is the outer unit normal field, $  \partial^{\mathcal{A}}_{\nu}y:= (\mathcal{A}\nabla y\cdot\nu){\arrowvert_\Gamma}$ is the
co-normal derivative at $\Gamma$.
The functions   $a$, $b$, $B$ and $B_\Gamma$ are assumed belong to   $L^{\infty}(\Omega_T)$, $L^{\infty}(\Gamma_T)$, $L^{\infty}(\Omega)^N$ and  $L^{\infty}(\Gamma_T)^N$, respectively;  $1_\omega$  is the  characteristic function of $\omega$ and $v$ is the control function which acts on the system through the subset $\omega$. 
For the   divergence operator $\dv(\cdot)$ and tangential divergence operator $\dv_{\Gamma}(\cdot)$, we recall from \cite{KhMaMaGh19}, the following definitions.
For $F\in (L^2(\Omega))^N$ and  $F_{\Gamma}\in (L^2(\Gamma))^N$ 
$$ \dv(F): H^1(\Omega) \longrightarrow \mathbb{R},\quad u\longmapsto -\int_{\Omega} F\cdot\nabla u\, dx + \langle F\cdot \nu,u_{|\Gamma}\rangle_{H^{-\frac{1}{2}}(\Gamma),H^{\frac{1}{2}}(\Gamma)},$$
$$ \dv_{\Gamma}(F_{\Gamma}): H^1(\Gamma) \longrightarrow \mathbb{R},\quad u_{\Gamma}\longmapsto -\int_{\Gamma} F_{\Gamma}\cdot\nabla _{\Gamma} u_{\Gamma}\, d\sigma.$$
Here, $\nabla _{\Gamma} u_{\Gamma}$ is the tangential gradient of $u_{\Gamma}$ on $\Gamma$. The boundary $\Gamma$ of $\Omega$ is considered to be a $(N-1)$-dimensional compact Riemannian
submanifold equipped by  the Riemannian metric $g,$  induced by
the natural embedding $\Gamma \subset \mathbb{R}^N$, and the natural surface measure $d\sigma.$ See for instance  the second section in \cite{LaTrYa99} or \cite{C2} for more details.
Viewed as linear forms on $H^1(\Omega)$ and
$H^1(\Gamma)$, $\dv(F)$ and  $\dv_{\Gamma}(F_{\Gamma})$ are   continuous. In particular, we have the following estimates
\begin{align*}
|\langle \dv(F), u\rangle| \leq C_1  \|F\|_{(L^2{(\Omega))^N}}\|u\|_{H^{1}(\Omega)}, \quad F\in (L^2(\Omega))^N, u\in H^{1}(\Omega),\\
|\langle \dv_{\Gamma}(F_{\Gamma}), u_{\Gamma}\rangle| \leq C_2 \|F_{\Gamma}\|_{(L^2{(\Gamma))^N}}\|u_{\Gamma}\|_{H^{1}(\Gamma)}, \quad F_{\Gamma}\in (L^2(\Gamma))^N, u_{\Gamma}\in H^{1}(\Gamma)
\end{align*}
for some positive constants $C_1$ and $C_2$,  see {\cite[Proposition 3.6]{KhMaMaGh19}} for more details. 
For $F_{\Gamma} =\nabla_{\Gamma}u_{\Gamma}, u_{\Gamma}\in H^1(\Gamma)$, we define the Laplace-Beltrami operator $\Delta_\Gamma$ as follows 
\begin{equation*}
\Delta_\Gamma u_{\Gamma}= \dv_{\Gamma}(\nabla_{\Gamma}u_{\Gamma}).
\end{equation*}
Throughout this paper, we assume that the matrices $\mathcal{A}$ and $\mathcal{A}_{\Gamma}$ satisfy 

Throughout this paper, we assume that the matrices $\mathcal{A}$ and $\mathcal{A}_{\Gamma}$ satisfy 
\begin{enumerate}
	\item [(i)] $\mathcal{A}(\cdot)=(c(\cdot)_{i,j})\in C^1(\overline{\Omega}, \mathbb{R}^{N\times N})$ and $\mathcal{A}_{\Gamma}(\cdot)=(c_{_{\Gamma}}(\cdot)_{i,j})\in C^1(\Gamma, \mathbb{R}^{N\times N})$ are  symmetric, i.e., $c(x)_{i,j}=c(x)_{j,i}$ and $c_{_{\Gamma}}(x)_{i,j}=c_{_{\Gamma}}(x)_{j,i}$, 
	\item [(ii)]  $\mathcal{A}(\cdot)$ and $\mathcal{A}_{\Gamma}(\cdot)$ are uniformly elliptic, in particular there exist constants  $\alpha>0 $ and $\alpha_{_{\Gamma}}>0 $  such that 
	\begin{equation}\label{el1.2}
	\langle \mathcal{A}(x)\zeta, \zeta\rangle\geq	\alpha|\zeta|^2, \quad \text{and}\,\,\langle \mathcal{A}_{\Gamma}(x_{_{\Gamma}})\zeta, \zeta\rangle\geq	\alpha_{_{\Gamma}}|\zeta|^2
	\end{equation}
	for each $x\in \overline{\Omega}$, $x_{_{\Gamma}}\in \Gamma$, and  $\zeta\in\mathbb{R}^N.$
\end{enumerate}

Several authors have studied the existence, uniqueness and the regularity of solutions to the system \eqref{1.1}. We refer for example to the papers \cite{MiZi05,KhMaMaGh19, B6,B8, B10, C2, B11} and the references therein. This type of boundary conditions has been also called generalized Wentzell or generalized Wentzell-Robin boundary conditions and arises for many known equations of mathematical physics and are  motivated by, among others, problems in diffusion phenomena, reaction-diffusion systems in phase-transition phenomena, special flows in hydrodynamics, models in climatology, and so on. We refer to \cite{Gol06} for a derivation and physical interpretation of such boundary conditions.

More recently, the controllability question of such kind of problems  has been also investigated, especially in papers \cite{B8}, \cite{CiGa'17} and \cite{KhMa19}. In {\cite{B8}}, the authors have proved  interior and boundary  controllability results of (\ref{1.1}). More precisely, they proved that for all $\omega\Subset\Omega$ (resp. $\Gamma_0\subset \Gamma$),  there is an  internal control (resp. boundary control) such that the solution to (\ref{1.1}) with $\mathcal{A}=\mathcal{A}_{\Gamma}=I,$ $I$ the identity matrix, and $B=B_\Gamma=0,$  satisfies both the null and approximate controllability properties. In \cite{KhMa19} the authors studied the  same question in the  case $B\neq0$ and $B_{\Gamma}\neq0$, but with $\mathcal{A}=\mathcal{A}_{\Gamma}=I$. It is worth noting that the main key, in \cite{KhMa19} and \cite{B8},  to obtain  the above controllability results  is the  establishment  of suitable Carleman estimates. Note that null controllability of parabolic  and hyperbolic equations, using Carleman estimates, is extensively  studied in the case of static boundary conditions, see e.g. \cite{Ba02, LaTrYa97, Cor12, ImYa03, Ba00, Ba99, B3, B5, Cor07}. In \cite{CiGa'17},  the authors have dealt with the controllability  of wave equation with dynamic conditions.              

As well known, see for instance\cite{Cor12},  the  problem of approximate controllability for system \eqref{1.1}  can be reduced to the following unique continuation property: \begin{equation}\label{01.3}
\varphi=0 \quad \text{in} \,\,\omega_T \Longrightarrow (\varphi=0 \quad\text{in} \,\,\Omega_T \,\,\text{and}  \,\,\varphi_\Gamma=0 \quad\text{on} \,\, \Gamma_T),        
\end{equation} 
where $\Phi=(\varphi,\varphi_\Gamma)$ is the solution of the  so-called adjoint system
{\small
	\begin{equation}
	\left\{\begin{array}{ll}
	{-\partial_t \varphi-\text{div}(\mathcal{A}\nabla \varphi)  -\text{div}(\varphi B)+ a(x,t)\varphi=0}& \,{\text {in} \,\Omega_T,} \\
	{-\partial_t \varphi_{\Gamma}   -\text{div}_{\Gamma}(\mathcal{A}_{\Gamma}\nabla_{\Gamma} \varphi_{\Gamma})     + \partial^{\mathcal{A}}_{\nu}\varphi - \text{div}_{\Gamma}(\varphi_{\Gamma} B_{\Gamma})+\varphi_{\Gamma} B\cdot\nu + b(x,t)\varphi_{\Gamma}=0}&\,{\text{on}\,\Gamma_T,} \\
	\varphi_{\Gamma}(x,t)=\varphi\rvert_{\Gamma}(x,t)&\,\text{on}\,\Gamma_T, \\
	{(\varphi(T),\varphi_{\Gamma}(T) ) =(\varphi_T,\varphi_{\Gamma,T})} &\,{\text {in}\, \Omega \times\Gamma} \label{1.5}.
	\end{array}\right.
	\end{equation}  }  
Using a suitable Carleman estimate, Lemma \ref{l5.8} below,  one can show that  the property \eqref{01.3}    holds, and the   approximate  controllability of \eqref{1.1}   is then satisfied.

The main concern in the present paper is to obtain explicit bound of the cost
of  interior  approximate controllability, that is,  an explicit bound, with respect to data,  of the minimal norm of  controls needed to control the system \eqref{1.1} approximately.

In order to  clarify this purpose, let us recall that  it is easy,   by Carleman estimate below, to show that for each $\varepsilon>0$ and $Y_1=(y_1, y_{\Gamma,1})\in\mathbb{L}^2:= L^2(\Omega)\times L^2(\Gamma)$ there exists a control $v\in L^2(\omega_T)$ such that
the solution  $Y= (y, y_\Gamma)$ to (\ref{1.1}) satisfies 
\begin{equation}\label{1.3}
\|y(T)-y_1\|_{L^2(\Omega)}\,+\, \|y_{\Gamma}(T)-y_{\Gamma,1}\|_{L^2(\Gamma)}\leq \varepsilon,
\end{equation}
where, in (\ref{1.3}), $Y(T)= (y(T), y_\Gamma(T))$ is the solution to (\ref{1.1}) at $t=T$. In fact, a null controllability result can be  proved.
In particular, inequality (\ref{1.3}) implies  that the set of  admissible internal controls \, $\mathcal{U}_{ad} (Y_0,Y_1,\varepsilon,\omega)$
$$\mathcal{U}_{ad} (Y_0,Y_1,\varepsilon,\omega)=\big\{ v\in L^2(\omega_T) :   \|y(T)-y_1\|_{L^2(\Omega)}\,+\, \|y_{\Gamma}(T)-y_{\Gamma,1}\|_{L^2(\Gamma)}\leq \varepsilon\big\}
$$
is nonempty.

\par       Let us introduce the following quantity, which measures  the cost of  interior  approximate controllability, or, more precisely, the cost of achieving \eqref{1.3}, namely
$$ \mathcal{C}(Y_0,Y_1,\varepsilon, \omega)= \inf_{v\in \mathcal{U}_{ad} (Y_0,Y_1,\varepsilon,\omega)}\|v\|_{L^2(\omega_T)}.$$
The  main aim of this paper is to obtain explicit bound of $\mathcal{C}(Y_0,Y_1,\varepsilon, \omega)$, and to show how to use this result to deal with the controllability issue for some semilinear problems,  see Section \ref{sss.3} below.

\par Observe that, taking into account that systems (\ref{1.1}) is linear, one can assume, without loss of generality, that $Y_0=0$. Indeed,
$$\mathcal{C}(Y_0,Y_1,\varepsilon,\omega)= \mathcal{C}(0,Z_1,\varepsilon,\omega),$$
where $Z_1= Y_1- Y(T,Y_0, v=0)$, and  $Y(T,Y_0, v=0)$ is the solution of (\ref{1.1}) with $v=0$ at $t=T$.

Several motivations can be found for this cost of  controllbility issue. Let us recall that  parabolic systems, due to their regularizing effect, are, roughly speaking,  approximately but not exactly controllable,  it is then  natural to study the cost of their approximate controllability, or,
in other words, the size of the control needed to reach to a neighborhood
of a final state which is not exactly reachable. This problem has been the object of intensive research  in the past few years.   Thus, it has been analyzed in several recent papers. Among them, let us mention  \cite{ BE'16, B4, Mi04, B7, B12}. In \cite{B4} and \cite{B7},  the authors obtained explicit bound of the cost of approximate controllability for linear heat equation, while \cite{B12} and  \cite{BE'16} treated the same question for semilinear heat equations   and for  Ginzburg-Landau
equation respectively. In \cite{LeLiGa09}, some results 
on $L^p$ and $L^{\infty}$-type cost estimates are obtained for the static heat equation. The paper \cite{Mi04} addressed the following question: How does the geometry of the control region influence the cost of controlling the heat to zero in small time? This problem of control cost,  as pointed out by several authors, is also important in engineering cybernetics.

For our further results, we  remind the following fundamental Fenchel-Reckafellar duality theorem.
\begin{theorem}\label{RF.1}$($\cite{Ek'74}$)$
	Let $X$, $Y$ be two Hilbert spaces  and $L\in\mathcal{L}(X,Y)$ a linear continuous operator. Let $F:X\longrightarrow \mathbb{R}\cup\{+\infty\}$ and    $G:Y\longrightarrow \mathbb{R}\cup\{+\infty\}$ be two convex and lower semi-continuous functions, and  assume  $0 \in \text{int}(L(D(F)-D(G) ))$.  Then,   we have
	\begin{equation*}
	\inf_{x\in X}[F(x) + G(L x)] = -\inf_{y\in Y}[F^{\star}(L^{\star}y) + G^{\star}(-y)],
	\end{equation*} 
	where $L^{\star}$ is the adjoint operator of $L$,  $F^{\star}$ and $G^{\star}$ are the conjugate functions of $F$ and $G$, respectively.
\end{theorem}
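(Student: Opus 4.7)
The plan is to combine Fenchel--Young weak duality with a biconjugate argument on an auxiliary perturbation function. First, for every $x \in X$ and $y \in Y$, the Fenchel--Young inequalities $F(x) + F^{\star}(L^{\star}y) \geq \langle y, Lx\rangle$ and $G(Lx) + G^{\star}(-y) \geq -\langle y, Lx\rangle$ add up to the pointwise bound $F(x) + G(Lx) \geq -F^{\star}(L^{\star}y) - G^{\star}(-y)$; minimising in $x$ and maximising in $y$ immediately yields the weak duality inequality $\inf_{x}[F(x) + G(Lx)] \geq -\inf_{y}[F^{\star}(L^{\star}y) + G^{\star}(-y)]$.

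For the reverse inequality I would introduce the perturbation function
\[
p(u) := \inf_{x\in X}\bigl[F(x) + G(Lx+u)\bigr], \qquad u \in Y,
\]
which is convex with effective domain $\operatorname{dom}(p) = D(G) - L(D(F))$, so the qualification hypothesis reads precisely $0\in\operatorname{int}(\operatorname{dom}(p))$; weak duality rules out $p\equiv -\infty$, hence $p$ is proper. A short computation with the substitution $v = Lx + u$ yields the conjugate
\[
p^{\star}(y) = \sup_{u,x}\bigl[\langle y, u\rangle - F(x) - G(Lx + u)\bigr] = F^{\star}(-L^{\star}y) + G^{\star}(y).
\]
Once $p$ is known to be lower semi-continuous at $0$, the Fenchel--Moreau biconjugate theorem gives $p(0) = p^{\star\star}(0) = -\inf_{y} p^{\star}(y)$, and replacing $y$ by $-y$ produces exactly the claimed identity.

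The main obstacle, and the only step that really uses the qualification condition, is establishing the lower semicontinuity (in fact continuity) of $p$ at $0$. Here I would invoke the classical fact that a proper convex function on a Hilbert space which is bounded above on a neighbourhood of an interior point of its domain is continuous at that point. The local upper bound on $p$ near $0$ is extracted from the qualification condition: I would pick $x_{0}\in D(F)$ and $v_{0}\in D(G)$ with $v_{0} - Lx_{0}$ close to an arbitrary direction, and use convexity of $G$ on a ball around $v_{0}$ to dominate $p(u)\leq F(x_{0}) + G(Lx_{0}+u)$ uniformly for $u$ in a small neighbourhood of $0$. Every other step is a mechanical application of convex duality.
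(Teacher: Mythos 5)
The paper does not actually prove this statement: it is quoted as a known result from Ekeland--Temam \cite{Ek'74}, so there is no in-paper argument to compare against. Your skeleton --- Fenchel--Young for weak duality, the perturbation function $p(u)=\inf_{x}[F(x)+G(Lx+u)]$, the identity $p^{\star}(y)=F^{\star}(-L^{\star}y)+G^{\star}(y)$, and Fenchel--Moreau at $u=0$ --- is the standard and correct route, and the weak-duality half, the computation of $\mathrm{dom}(p)$ and of $p^{\star}$, and the final sign bookkeeping are all fine. (One small slip: weak duality does not by itself rule out $p(0)=-\infty$; but in that degenerate case $p^{\star}\equiv+\infty$ and the asserted identity reads $-\infty=-\infty$, so it can be dispatched separately.)

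The genuine gap is in the step you yourself identify as the crux: the local upper bound for $p$ near $0$. You propose to bound $p(u)\le F(x_{0})+G(Lx_{0}+u)$ with a \emph{fixed} $x_{0}$ and to control the right-hand side by convexity of $G$ on a ball around $Lx_{0}$. For that to give a finite bound uniformly in $u$ you need $G$ to be bounded above on a ball centred at $Lx_{0}$, i.e.\ continuous at a point of $L(D(F))$. That is the hypothesis of the classical Ekeland--Temam version, but it is \emph{not} implied by the stated qualification $0\in\mathrm{int}\,(L(D(F))-D(G))$: the latter can hold while $D(G)$ has empty interior (e.g.\ $L$ surjective, $F\equiv 0$, $G(y)=\sum n^{2}y_{n}^{2}$ on $\ell^{2}$), in which case $G$ is nowhere locally bounded and no single $x_{0}$ works for all $u$ near $0$. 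Under the stated hypothesis one must let the witness $x$ depend on $u$ and obtain a \emph{uniform} bound on $F(x_{u})+G(Lx_{u}+u)$, which is where the real work lies; the standard device (essentially Attouch--Br\'ezis) is a Baire category argument on the sets $C_{n}=\{u:\exists x,\ \|x\|\le n,\ F(x)\le n,\ G(Lx+u)\le n\}$. These are convex, they are closed (using weak compactness of balls in the Hilbert space together with weak lower semicontinuity of the convex lsc functions $F$ and $G$), and they cover $\mathrm{dom}(p)$; since a closed ball around $0$ lies in $\mathrm{dom}(p)$, Baire gives a ball $B(u_{0},r)\subset C_{N}$, and averaging with a point $-u_{0}\in C_{M}$, $M\ge N$, yields $B(0,r/2)\subset C_{M}$, hence $p\le 2M$ there. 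Without an argument of this type the continuity of $p$ at $0$ --- and with it the strong-duality half of the theorem --- is not established.
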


The paper is organized as follows. In Section \ref{WP0.1}  we  introduce some functional spaces which allow us to give sense to the solution of the considered equations, and we recall some previous results on the well-posedness issue.  In Section \ref{Caar0.2}, we state and show several Carleman estmates; needed for our aim.    We consider the linear case, in   Section \ref{subs.1},  and  the semilinear case in  Section \ref{sss.3}.
\section{Functional setting and well-posedness }\label{WP0.1}
Let $\Omega\subset \mathbb{R}^N$ be a bounded open set with smooth boundary $\Gamma :=\partial\Omega$ and $1 \leq p\leq \infty$. Following \cite{B8}, we introduce the product space defined by
$$ \mathbb{L}^p = L^p(\Omega)\times L^p(\Gamma), \qquad 1 \leq p\leq \infty.$$
Here,  we have considered the Lebesgue measure $dx$ on $\Omega$ and the natural surface measure $d\sigma$ on $\Gamma.$
Equipped by the norm
$$  \|(u, u_\Gamma)\|_{\mathbb{L}^p}= \big( \|u\|^p_{L^{p}(\Omega)} + \|u_\Gamma\|^p_{L^p(\Gamma)}   \big )^{\frac{1}{p}},\quad 1\leq p < \infty,$$
$\mathbb{L}^p$ is a Banach space.
For $p=\infty$, we set {\small$\|(u, u_\Gamma)\|_{\mathbb{L}^{\infty}} = \max\{\|u\|_{L^{\infty}(\Omega)}, \|u_\Gamma\|_{L^{\infty}(\Gamma)} \},$}  and we have $(\mathbb{L}^{\infty}, \|.\|_{\mathbb{L}^{\infty}})$ is a Banach space.   
Observe that  $\mathbb{L}^p$ can be identified with the space $L^p(\overline{\Omega}, d\mu)$, where the measure $\mu$ is defined on $\overline{\Omega}$,    for every measurable set $ B\subset \overline{\Omega}$,
by
$$  \mu(B)= \int_{B\cap\Omega}dx \,+\sigma(B\cap \Gamma).$$
In addition, it is  known  that $ \mathbb{L}^2 $ is a real Hilbert space with the scalar product 
$$ \big\langle (u,w),(v,z)\big\rangle_{\mathbb{L}^2}= \langle u,v\rangle_{L^2(\Omega)} + \langle w,z\rangle_{L^2(\Gamma)} = \int_{\Omega}u v \,dx +  \int_{\Gamma}w z \,d\sigma.$$
Recall that $H^1(\Gamma)$ and $H^2(\Gamma)$ are real Hilbert spaces endowed with the respective norms
$$ \|u\|_{H^1(\Gamma)}= \langle u,u \rangle^{\frac{	1}{2}}_{H^1(\Gamma)}, \,\text{with} \,\,\langle u,v \rangle_{H^1(\Gamma)}= \int_{\Gamma}u v d\sigma + \int_{\Gamma}\nabla_{\Gamma} u \nabla_{\Gamma}v d\sigma, \quad $$
and 
$$ \|u\|_{H^2(\Gamma)}= \langle u,u \rangle^{\frac{	1}{2}}_{H^2(\Gamma)},\, \text{with}\,\, \langle u,v \rangle_{H^2(\Gamma)}= \int_{\Gamma}u v \,d\sigma +\int_{\Gamma}\Delta_{\Gamma} u \Delta_{\Gamma}v d\sigma.\quad $$                 
We point out that the operator $\Delta_{\Gamma}$  can be considered as an unbounded linear operator from $L^2(\Gamma)$ in $L^2(\Gamma)$, with domain
$$D(\Delta_{\Gamma}) =\{ u\in L^2(\Gamma) : \,\, \Delta_{\Gamma}u\in L^2(\Gamma)\},$$
and is known   that $-\Delta_{\Gamma}$ is  a self-adjoint and nonnegative operator on $L^2(\Gamma)$. This implies that  $-\Delta_{\Gamma}$ generates an analytic  $C_0$-semigroup $(e^{t\Delta_\Gamma})_{t\geq 0}$  on $L^2(\Gamma)$. If $\Gamma$ is
smooth, then one can show that $D(\Delta_\Gamma) = H^2(\Gamma)$, and $u\mapsto \|u\|_{L^2(\Gamma)}+ \|\Delta_{\Gamma}u\|_{L^2(\Gamma)}$ defines an equivalent norm on $H^2(\Gamma)$,  see {\cite{B8, B10,B11}} and    the references therein for more details.  As in {\cite{B8}}, we denote 
$$\mathbb{H}^k=\{(u,u_\Gamma)\in H^k(\Omega)\times H^k(\Gamma): \,\, u_\Gamma=u_{|\Gamma}\},\quad k=1,2,$$
viewed as a subspace of $H^k(\Omega)\times H^k(\Gamma)$ with the natural topology inherited by $H^k(\Omega)\times H^k(\Gamma)$,  where $u_{|\Gamma}$  denotes the trace of $u$ on $\Gamma$,
and
$$ \mathbb{E}(t_0, t_1)= H^2(t_0,t_1; \mathbb{L}^2) \cap L^2(t_0,t_1; \mathbb{H}^2), \quad \text{for} \,\,t_1>t_0,  \, \text{and} \,\,\mathbb{E}_1=\mathbb{E}(0, T).$$
Notices that the trace theorem show that the space $\mathbb{H}^1$ is closed in $H^1(\Omega)\times H^1(\Gamma)$, and 
the norm
$$ \|(u,u_\Gamma)\|_{\mathbb{H}^1}= \langle (u,u_\Gamma),(u,u_\Gamma) \rangle^{\frac{1}{2}}_{{\mathbb{H}^1}},$$ $$ \text{where}\quad  \langle (u,u_\Gamma),(v,v_\Gamma) \rangle_{{\mathbb{H}^1}} = \int_{\Omega}\nabla u \nabla v\, dx \,+\, \int_{\Gamma}\nabla_{\Gamma} u_\Gamma \nabla_{\Gamma} v_\Gamma\, d\sigma \,+\,\int_{\Gamma} u_\Gamma  v_\Gamma\, d\sigma,\qquad $$
is equivalent in $\mathbb{H}^1$ to the standard norm inherited by $H^1(\Omega)\times H^1(\Gamma).$
Similarly,  for  $\mathbb{H}^2$, the norm    
$$ \|(u,u_\Gamma)\|_{\mathbb{H}^2}= \langle (u,u_\Gamma),(u,u_\Gamma) \rangle^{\frac{1}{2}}_{{\mathbb{H}^2}},$$ $$ \text{where}\quad  \langle (u,u_\Gamma),(v,v_\Gamma) \rangle_{{\mathbb{H}^2}} = \int_{\Omega}\Delta u \Delta v\, dx \,+\, \int_{\Gamma}\Delta_{\Gamma} u_\Gamma \Delta_{\Gamma} v_\Gamma\, d\sigma \,+\,\int_{\Gamma} u_\Gamma  v_\Gamma\, d\sigma,\qquad $$
is equivalent in $\mathbb{H}^2$ to the standard norm inherited by $H^2(\Omega)\times H^2(\Gamma).$ 

We denote by $(H^k(\Omega))'$, $H^{-k}(\Gamma)$ and $\mathbb{H}^{-k}$, the dual of $H^k(\Omega)$, $H^{k}(\Gamma)$ and  $\mathbb{H}^{k}$, respectively,  $k=1,2$, and  
$$	\mathbb{W}=\{U\in L^2(0,T; \mathbb{H}^1): \,\,\, U^{'}\in L^2(0,T; \mathbb{H}^{-1}) \}.$$
Recall also that
$\mathbb{H}^1$ (resp. 	$\mathbb{E}_1$) embeds compactly into $\mathbb{L}^2$ (resp. $L^2(0,T; \mathbb{H}^1))$. This will be essential when dealing with the semilinear case in Section \ref{sss.3} below.

Now we shall recall some results  on  the  well-posedness of the nonhomogeneous  forward    system
{\small
	\begin{equation}
	\left\{\begin{array}{ll}
	{\partial_t y-\text{div}(\mathcal{A}\nabla y)  +B(x,t)\cdot \nabla y+ a(x,t)y= f }& {\text { in } \Omega_T,} \\
	{\partial_t y_{_{\Gamma}}   - \text{div}_{\Gamma}(\mathcal{A}_{\Gamma}\nabla_{\Gamma} y_{\Gamma})    + \partial_{\nu}^{\mathcal{A}}y+B_{\Gamma}(x, t)\cdot \nabla_{\Gamma} y_{\Gamma} + b(x,t)y_{\Gamma(0)}=g} &\,\,{\text {on} \,\Gamma_T,} \\
	y_{\Gamma}(x,t) = y\rvert_{\Gamma}(x,t) &\,\,\text{on } \Gamma_T, \\
	{(y(0),y_{\Gamma}(0)) =(y_0,y_{\Gamma,0})} & {\text { in } \Omega_T} \label{sys2.2}.
	\end{array}\right.
	\end{equation}}
and the non-homogeneous backward one
{\small
	\begin{equation}
	\left\{\begin{array}{ll}
	{-\partial_t \varphi-\text{div}(\mathcal{A}\nabla \varphi)  -\text{div}(\varphi B)+ a(x,t)\varphi=f_1}& {\text {in} \,\Omega_T,} \\
	{-\partial_t \varphi_{\Gamma}   -\text{div}_{\Gamma}(\mathcal{A}_{\Gamma}\nabla_{\Gamma} \varphi_{\Gamma})     + \partial^{\mathcal{A}}_{\nu}\varphi - \text{div}_{\Gamma}(\varphi_{\Gamma} B_{\Gamma})+\varphi_{\Gamma} B\cdot\nu + b(x,t)\varphi_{\Gamma}=g_1}&\,{\text{on}\,\Gamma_T,} \\
	\varphi_{\Gamma}(t,x)=\varphi\rvert_{\Gamma}(t,x)&\,\text{on}\,\Gamma_T, \\
	{(\varphi(T),\varphi_{\Gamma}(T) ) =(\varphi_T,\varphi_{\Gamma,T})} &\,{\text {in}\, \Omega \times\Gamma} \label{systadjoint2.3}.
	\end{array}\right.
	\end{equation}   }
Remark first that the system \eqref{sys2.2} can be rewritten as the following
abstract Cauchy problem
\begin{equation}\label{1.8}
\left\{
\begin{array}{ll}
Y'(t) = A\,Y - D(t)\,Y+F , \,   \,\,\, t>0,  \\

Y(0)=Y_0= (y_0,y_{\Gamma,0}),
\end{array}
\right.
\end{equation}
where, 
$$ Y=(y, y_\Gamma),\,\, F=(f, g),\quad A=\begin{pmatrix} 
\text{div}(\mathcal{A}\nabla )  & 0 \\
-\partial^{\mathcal{A}}_{\nu} & \text{div}_{\Gamma}(\mathcal{A}_{\Gamma}\nabla_{\Gamma}) 
\end{pmatrix}, \mathcal{D}(A) =\mathbb{H}^2$$
and 
$$D(t)=\begin{pmatrix} 
B(t)\cdot\nabla + a(t) & 0 \\
0 & B_{\Gamma}(t)\cdot\nabla_{\Gamma} + b(t)
\end{pmatrix}.$$


Following \cite{B8}, we can show that the operator A satisfies the following important property.
\begin{proposition}[\cite{B8}]\label{t4.2}
	The operator $A$ is densely defined, and generates an analytic $C_0$-semigroup $(e^{tA})_{t\geq 0}$ on $\mathbb{L}^2.$ We have also  $(\mathbb{L}^2, \mathbb{H}^2 )_{\frac{1}{2}, 2}= \mathbb{H}^1$.
\end{proposition}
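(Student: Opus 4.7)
The plan is to construct $-A$ via a sesquilinear form and invoke the standard form method on Hilbert spaces. Define $\mathfrak{a}\colon \mathbb{H}^1 \times \mathbb{H}^1 \to \mathbb{R}$ by
\begin{equation*}
\mathfrak{a}\bigl((u,u_\Gamma),(v,v_\Gamma)\bigr) = \int_\Omega \mathcal{A}\nabla u \cdot \nabla v\, dx + \int_\Gamma \mathcal{A}_\Gamma \nabla_\Gamma u_\Gamma \cdot \nabla_\Gamma v_\Gamma\, d\sigma.
\end{equation*}
The boundedness of $\mathcal{A},\mathcal{A}_\Gamma$ yields continuity of $\mathfrak{a}$ on $\mathbb{H}^1$; symmetry of the coefficient matrices makes $\mathfrak{a}$ symmetric; the uniform ellipticity assumption \eqref{el1.2} gives $\mathfrak{a}((U,U)) + \|U\|^2_{\mathbb{L}^2} \geq c\,\|U\|^2_{\mathbb{H}^1}$, i.e.\ coercivity after a mass shift. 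Density of $\mathbb{H}^1$ in $\mathbb{L}^2$ is immediate from the density of $\{(u,u_{|\Gamma}): u\in C^\infty(\overline{\Omega})\}$.

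Next I would identify the operator associated with $\mathfrak{a}$ as $-A$ with domain $\mathbb{H}^2$. For $(u,u_\Gamma)\in\mathbb{H}^2$ and $(v,v_\Gamma)\in\mathbb{H}^1$, integration by parts using the definitions of $\mathrm{div}$ and $\mathrm{div}_\Gamma$ recalled in the introduction gives
\begin{equation*}
\mathfrak{a}\bigl((u,u_\Gamma),(v,v_\Gamma)\bigr) = -\int_\Omega \mathrm{div}(\mathcal{A}\nabla u)\,v\, dx + \int_\Gamma \bigl(\partial_\nu^{\mathcal{A}}u - \mathrm{div}_\Gamma(\mathcal{A}_\Gamma\nabla_\Gamma u_\Gamma)\bigr)v_\Gamma\, d\sigma,
\end{equation*}
which is exactly $\langle -A(u,u_\Gamma),(v,v_\Gamma)\rangle_{\mathbb{L}^2}$. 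The fact that the form-theoretic domain coincides with $\mathbb{H}^2$ (and not merely a subspace of $\mathbb{H}^1$ on which the distributional image lies in $\mathbb{L}^2$) is the main technical point; it rests on elliptic regularity for the Wentzell problem together with the mapping properties of $\partial_\nu^{\mathcal{A}}$, and is precisely the content proved in \cite{B8} in the model case $\mathcal{A}=\mathcal{A}_\Gamma=I$. I would either quote that result and note that the same argument goes through for smooth variable coefficients, or sketch the freezing-of-coefficients / bootstrapping argument that reduces the general case to the Laplacian case. Once this identification is in place, the classical form theorem (see e.g.\ Ouhabaz or Kato) immediately yields that $-A$ is self-adjoint and nonnegative on $\mathbb{L}^2$, hence $A$ generates an analytic $C_0$-semigroup, and density of the domain is automatic.

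Finally, the interpolation identity $(\mathbb{L}^2,\mathbb{H}^2)_{1/2,2}=\mathbb{H}^1$ is a consequence of the general fact that for a nonnegative self-adjoint operator $T$ on a Hilbert space $H$ one has $(H,D(T))_{1/2,2}=D(T^{1/2})$, together with the identity $D((I-A)^{1/2}) = V$, where $V$ is the form domain of the associated closed symmetric form. Applying this to $T=I-A$ and $V=\mathbb{H}^1$ gives the stated equality.

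The step I expect to be most delicate is the precise identification $D(A)=\mathbb{H}^2$: the form method alone only produces $\{U\in \mathbb{H}^1 : \mathfrak{a}(U,\cdot)\text{ extends continuously to }\mathbb{L}^2\}$, and showing that this abstract domain coincides with the Sobolev description $\mathbb{H}^2$ requires the elliptic regularity result available in \cite{B8}. Everything else is routine verification from the definitions and the bounded/elliptic hypotheses on $\mathcal{A}$ and $\mathcal{A}_\Gamma$.
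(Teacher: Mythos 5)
Your argument is correct and is essentially the proof behind the statement: the paper itself gives no proof of Proposition \ref{t4.2}, simply quoting \cite{B8}, and the form method you describe (a symmetric, continuous, $\mathbb{L}^2$-elliptic form on $\mathbb{H}^1$, identification of the associated operator's domain with $\mathbb{H}^2$ via elliptic regularity for the Wentzell problem, then $(\mathbb{L}^2,\mathbb{H}^2)_{\frac12,2}=D\bigl((I-A)^{1/2}\bigr)=\mathbb{H}^1$) is exactly how that reference and its variable-coefficient extension \cite{KhMaMaGh19} establish the result. You also correctly isolate the only genuinely nontrivial step, namely the identification $D(A)=\mathbb{H}^2$ by elliptic regularity, which is precisely the external input the paper is importing.
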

The following existence and uniqueness results hold.
\begin{proposition} [\cite{Schn01}]\label{t4.22}
	For every $Y_0=(y_0,y_{\Gamma,0})\in\mathbb{L}^2$, $f\in L^2(\Omega_T)$ and  $g\in L^2(\Gamma_T)$,  the system \eqref{sys2.2} has a unique mild solution given by \begin{equation*}\label{4.2}
	Y(t)= e^{tA}Y_0 \,+\, \int_{0}^{t}e^{(t-s)A}(F(s)- D(s)Y(s))ds
	\end{equation*}
	for all $t\in [0,T]$.
	Moreover,  there exists a
	constant $C > 0$ such that
	\begin{equation}\label{4.3}
	\|Y\|_{C([0,T];\mathbb{L}^2)}\leq C\big( \|Y_0\|_{\mathbb{L}^2} + \|f\|_{L^2(\Omega_T) } + \|g\|_{L^2(\Gamma_T) } \big).
	\end{equation}
	
	
	\begin{proof}
		We have \,	$(\mathbb{L}^2, \mathbb{H}^2 )_{ \frac{1}{2}, 2}= \mathbb{H}^1$	and since $a, b, B,$ and  $B_{\Gamma}$ are bounded, then $D(t)\in \mathcal{L}(\mathbb{H}^1; \mathbb{L}^2)$, for all $t\in (0,T)$. It suffices then to apply Theorem 3.1 in \cite{Schn01}.\qed
	\end{proof}
\end{proposition}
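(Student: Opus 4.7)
The plan is to recast the system \eqref{sys2.2} as the abstract Cauchy problem \eqref{1.8} on $\mathbb{L}^2$ and invoke an abstract perturbation theorem for analytic semigroups with time-dependent, relatively bounded perturbations, namely Theorem~3.1 of \cite{Schn01}. The principal part is already handled by Proposition~\ref{t4.2}: $A$ generates an analytic $C_0$-semigroup on $\mathbb{L}^2$ and the half-way real interpolation space is $(\mathbb{L}^2,\mathbb{H}^2)_{1/2,2}=\mathbb{H}^1$. The inhomogeneity $F=(f,g)$ sits in $L^2(0,T;\mathbb{L}^2)$ by hypothesis, so only the structural properties of $D(t)$ remain to be verified.

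For that, one writes
\[
D(t)(u,u_\Gamma)=\bigl(B(t)\cdot\nabla u+a(t)u,\ B_\Gamma(t)\cdot\nabla_\Gamma u_\Gamma+b(t)u_\Gamma\bigr),
\]
and uses the $L^\infty$-bounds on $a,b,B,B_\Gamma$ together with the continuity of $\nabla$ and $\nabla_\Gamma$ from their $H^1$-components into the corresponding $L^2$-spaces to deduce, uniformly in $t$,
\[
\|D(t)(u,u_\Gamma)\|_{\mathbb{L}^2}\le M\,\|(u,u_\Gamma)\|_{\mathbb{H}^1}.
\]
Hence $D\in L^\infty(0,T;\mathcal{L}(\mathbb{H}^1,\mathbb{L}^2))$. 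Since $\mathbb{H}^1$ coincides with the order-$1/2$ interpolation space of $A$, this is exactly the relative boundedness hypothesis feeding into \cite[Theorem~3.1]{Schn01}, which then yields a unique mild solution $Y\in C([0,T];\mathbb{L}^2)$ represented by the variation-of-constants formula, together with the a priori estimate \eqref{4.3}. If one wants to derive \eqref{4.3} by hand, it suffices to combine the uniform bound $\|e^{tA}\|_{\mathcal{L}(\mathbb{L}^2)}\le Me^{\omega t}$ with the analytic smoothing bound $\|e^{tA}\|_{\mathcal{L}(\mathbb{L}^2,\mathbb{H}^1)}\le Ct^{-1/2}$ in the mild formula and close the resulting inequality by a singular Gronwall lemma.

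The main obstacle, and the reason for going through Schnaubelt's abstract framework rather than a plain Duhamel argument, is that the drift terms $B\cdot\nabla$ and $B_\Gamma\cdot\nabla_\Gamma$ are \emph{not} bounded perturbations on $\mathbb{L}^2$: they lose half a derivative relative to the generator. One must therefore exploit both the analyticity of $(e^{tA})_{t\ge0}$ and the identification $\mathbb{H}^1=(\mathbb{L}^2,\mathbb{H}^2)_{1/2,2}$ to absorb this gap, and this is precisely what is packaged in \cite[Theorem~3.1]{Schn01}.
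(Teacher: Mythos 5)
Your argument is correct and follows essentially the same route as the paper: both reduce \eqref{sys2.2} to the abstract problem \eqref{1.8}, use Proposition \ref{t4.2} for the analyticity of $(e^{tA})_{t\ge 0}$ and the identification $(\mathbb{L}^2,\mathbb{H}^2)_{\frac12,2}=\mathbb{H}^1$, check that the $L^\infty$-bounds on $a,b,B,B_\Gamma$ give $D(t)\in\mathcal{L}(\mathbb{H}^1,\mathbb{L}^2)$ uniformly in $t$, and conclude by Theorem~3.1 of \cite{Schn01}. Your additional remarks on why the drift terms are only relatively bounded and how \eqref{4.3} could alternatively be closed by a singular Gronwall argument are consistent elaborations of the same proof, not a different one.
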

For the backward system, we have the following well-posedness result,  see \cite{KhMaMaGh19} for the proof and more details.
\begin{proposition}\label{prop2.3}
	For every $\Phi_T=(\varphi_T,\varphi_{\Gamma,T})\in\mathbb{L}^2$ and $F_1=(f_1,g_1)\in$ $\\L^2(0,T; \mathbb{H}^{-1})$, the backward system \eqref{systadjoint2.3} has a unique weak solution {\small$\Phi=(\varphi, \varphi_{\Gamma)}\in \mathbb{W}^1$,} i.e., 
	\begin{align}
	&\int_{0}^{T}\langle \partial_t\varphi, v\rangle_{(H^1(\Omega))^{'}, H^1(\Omega) }\,dt + \int_{\Omega_T}\mathcal{A}\nabla\varphi\cdot\nabla v\, dx\,dt 
	- \int_{\Omega_T}\varphi B\cdot\nabla v\, dx\,dt\nonumber\\
	&+\int_{\Omega_T}a \varphi v \,dx\,dt + \int_{0}^{T}\langle \partial_t\varphi_{\Gamma}, v_{\Gamma}\rangle_{H^{-1}(\Gamma), H^1(\Gamma) }\,dt +\int_{\Gamma_T}\mathcal{A}_{\Gamma}\nabla_{\Gamma}\varphi_{\Gamma}\cdot\nabla_{\Gamma} v_{\Gamma} \,d\sigma\,dt \nonumber\\
	& -\int_{\Gamma_T}\varphi_{\Gamma} B_{\Gamma}\cdot\nabla_{\Gamma} v_{\Gamma} d\sigma\,dt +\int_{\Gamma_T}b\varphi_{\Gamma} v_{\Gamma}\, d\sigma\,dt=  \int_{0}^{T}\langle f, v\rangle_{(H^1(\Omega))^{'}, H^1(\Omega) }\,dt \nonumber\\
	& +\int_{0}^{T}\langle g, v_{\Gamma}\rangle_{H^{-1}(\Gamma), H^1(\Gamma) }\,dt\label{Kh1.12}
	\end{align}
	for each $(v,v_{\Gamma})\in L^2(0,T; \mathbb{H}^1)$, with  $v(0)=v_{\Gamma}(0)=0$ and $(\varphi(T), \varphi_{\Gamma}(T) )=(\varphi_T, \varphi_{\Gamma, T}).$
	Moreover, we have the estimate
	\begin{equation}\label{est2.10}
	\max_{0\leq t\leq T}\|\Phi(t)\|^2_{\mathbb{L}^2}+\|\Phi\|^2_{L^2(0,T ; \mathbb{H}^1)}+\|\Phi^{'}\|^2_{L^2(0,T; \mathbb{H}^{-1})}\leq C\big(\|\Phi_T\|_{\mathbb{L}^2}+ \|F\|^2_{L^2(0,T; \mathbb{H}^{-1})})
	\end{equation}  
	for some positive constant $C$.
\end{proposition}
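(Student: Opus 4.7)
The plan is to reduce the backward problem to a standard forward parabolic equation by time-reversal and then apply the Lions--Magenes variational framework for the Gelfand triple $\mathbb{H}^1\hookrightarrow\mathbb{L}^2\hookrightarrow\mathbb{H}^{-1}$. Setting $\psi(t)=\varphi(T-t)$ and $\psi_{\Gamma}(t)=\varphi_{\Gamma}(T-t)$, the system \eqref{systadjoint2.3} turns into a forward problem with coefficients $\tilde a(t)=a(T-t)$, $\tilde b(t)=b(T-t)$, $\tilde B(t)=B(T-t)$, $\tilde B_{\Gamma}(t)=B_{\Gamma}(T-t)$ (all retaining their $L^{\infty}$ bounds), source $\tilde F_1(t)=F_1(T-t)$, and initial datum $\Phi_T$, so it suffices to treat this forward problem and then transfer the conclusions back.

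To set up the variational formulation I would test the two reversed equations against a pair $V=(v,v_{\Gamma})\in\mathbb{H}^1$ and integrate by parts in space. The decisive algebraic observation is that the boundary remainders arising from the bulk equation cancel the extra surface terms in the boundary equation: the term $\int_{\Omega}-\text{div}(\mathcal{A}\nabla\psi)\,v\,dx$ produces $\int_{\Omega}\mathcal{A}\nabla\psi\cdot\nabla v\,dx - \int_{\Gamma}\partial^{\mathcal{A}}_{\nu}\psi\,v_{\Gamma}\,d\sigma$, and this boundary piece cancels the $\partial^{\mathcal{A}}_{\nu}\psi$ appearing in the surface equation; likewise, $\int_{\Omega}-\text{div}(\psi\tilde B)\,v\,dx$ gives $\int_{\Omega}\psi\tilde B\cdot\nabla v\,dx - \int_{\Gamma}\psi_{\Gamma}\tilde B\cdot\nu\,v_{\Gamma}\,d\sigma$, whose surface piece cancels $\psi_{\Gamma}\tilde B\cdot\nu$. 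What remains is the bounded bilinear form
\begin{align*}
\mathfrak{a}(t;U,V) &= \int_{\Omega}\mathcal{A}\nabla u\cdot\nabla v\,dx + \int_{\Gamma}\mathcal{A}_{\Gamma}\nabla_{\Gamma}u_{\Gamma}\cdot\nabla_{\Gamma}v_{\Gamma}\,d\sigma \\
&\quad - \int_{\Omega}u\,\tilde B(t)\cdot\nabla v\,dx - \int_{\Gamma}u_{\Gamma}\,\tilde B_{\Gamma}(t)\cdot\nabla_{\Gamma}v_{\Gamma}\,d\sigma \\
&\quad + \int_{\Omega}\tilde a(t)\,u\,v\,dx + \int_{\Gamma}\tilde b(t)\,u_{\Gamma}\,v_{\Gamma}\,d\sigma
\end{align*}
on $\mathbb{H}^1\times\mathbb{H}^1$, uniformly continuous in $t\in(0,T)$ thanks to the $L^{\infty}$ bounds on the coefficients.

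A G\aa{}rding inequality for $\mathfrak{a}$ then follows by combining the ellipticity bounds \eqref{el1.2} with Young's inequality to absorb the drift terms into the principal parts: there exist $c>0$ and $\lambda_0\ge 0$ with $\mathfrak{a}(t;U,U)+\lambda_0\|U\|_{\mathbb{L}^2}^2\geq c\|U\|_{\mathbb{H}^1}^2$ for all $U\in\mathbb{H}^1$ and a.e.\ $t\in(0,T)$. This places the problem in the classical Lions--Magenes setting and yields a unique $\psi\in\mathbb{W}$ solving the variational identity \eqref{Kh1.12} (read in forward time). The estimate \eqref{est2.10} is then obtained by choosing $V=\psi(t)$ in that identity, applying the G\aa{}rding bound on the left and Cauchy--Schwarz together with the duality pairing on the right, and closing with Gronwall's lemma; the $L^2(0,T;\mathbb{H}^{-1})$-bound on $\psi'$ is read directly off the equation. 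Undoing the time-reversal yields the statement of Proposition \ref{prop2.3}.

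The main technical obstacle is the correct handling of the coupling between bulk and boundary equations through the co-normal trace $\partial^{\mathcal{A}}_{\nu}\psi$ and the surface drift $\psi_{\Gamma}B\cdot\nu$: treated naively, $\partial^{\mathcal{A}}_{\nu}\psi$ does not belong to $L^2(\Gamma)$ for a generic $\psi\in\mathbb{H}^1$, so the bilinear form would not even be well-defined on $\mathbb{H}^1\times\mathbb{H}^1$. The cancellation described above is precisely what makes the variational formulation legitimate; once this point is recognized, the remainder is a routine application of Lions' theorem for parabolic variational equations, which is the route taken in \cite{KhMaMaGh19}.
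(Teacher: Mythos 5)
The paper offers no proof of Proposition \ref{prop2.3}; it simply defers to \cite{KhMaMaGh19}. Your argument is correct and is essentially the one carried out in that reference: time reversal followed by Lions' variational theory on the Gelfand triple $\mathbb{H}^1\hookrightarrow\mathbb{L}^2\hookrightarrow\mathbb{H}^{-1}$, with a G\aa{}rding inequality absorbing the drift terms and a Gronwall closure for \eqref{est2.10}; in particular, your observation that the co-normal derivative $\partial^{\mathcal{A}}_{\nu}\varphi$ and the term $\varphi_{\Gamma}B\cdot\nu$ cancel against the boundary contributions of the bulk integrations by parts is exactly the structural point that makes the bilinear form well defined on $\mathbb{H}^1\times\mathbb{H}^1$.
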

\section{ Carleman estimate}\label{Caar0.2}
\par The main aim in this section is to establish a suitable Carleman estimate to the following general adjoint system
{\small
	\begin{equation}
	\left\{\begin{array}{ll}
	{-\partial_t \varphi-\text{div}(\mathcal{A}\nabla \varphi)= F_0-  \text{div}(F)}& {\text {in} \,\Omega_T,} \\
	{-\partial_t \varphi_{\Gamma}   -\text{div}_{\Gamma}(\mathcal{A}_{\Gamma}\nabla_{\Gamma} \varphi_{\Gamma} )     + \partial^{\mathcal{A}}_{\nu}\varphi=F_{\Gamma,0} +F\cdot\nu- \text{div}_{\Gamma}(F_{\Gamma})}&\,{\text{on}\,\Gamma_T,} \\
	\varphi_{\Gamma}(x,t)=\varphi\rvert_{\Gamma}(x,t)&\,\text{on}\,\Gamma_T, \\
	{(\varphi(T),\varphi_{\Gamma}(T) ) =(\varphi_T,\varphi_{\Gamma,T})} &\,{\text {in}\, \Omega \times\Gamma} \label{NE1.5}
	\end{array}\right.
	\end{equation} }
for $F_0\in L^2(0,T; L^2(\Omega))$, $F\in L^2(0,T; L^2(\Omega)^N)$, $F_{\Gamma,0}\in L^2(0,T; L^2(\Gamma))$  $F_{\Gamma}\in L^2(0,T; L^2(\Gamma)^N)$ and $(\varphi_T,\varphi_{\Gamma,T})\in  \mathbb{L}^2.$

\par Let us start with    introducing the  following  well-known Morse function, see for instance \cite{B5} and \cite{Cor12}.
\begin{lemma}
	Let  $\omega$ be a small open set of $\Omega$.  There is a function $\eta_0\in C^2(\overline{\Omega})$ such that
	\begin{equation*}\label{00.1}
	\left\{
	\begin{array}{ll}
	\eta_0> 0  &\quad  \mathrm{in}\,\,  \Omega  \quad  \mathrm{and} \quad\eta_0=0 \quad \mathrm{in} \,\, \Gamma,\\
	|\nabla\eta_0|  \neq 0  &\quad \mathrm{in}   \,\,\, \overline{\Omega\backslash\omega}.
	
	\end{array}
	\right.     
	\end{equation*}
	Furthermore, the function $\eta_0$ holds the  following  properties 
	\begin{equation*}\label{eta3.26}
	|\nabla_{\Gamma}\eta_0|=0, \quad \partial_{\nu}\eta_0<-c, \quad \nabla\eta_0= \partial_{\nu}\eta_0 \nu\quad \text{on} \,\,\Gamma
	\end{equation*}
	for some constant $c>0$.
\end{lemma}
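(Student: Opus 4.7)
The plan is to reduce the lemma to its classical Fursikov--Imanuvilov formulation in the bulk and to check that the boundary identities are automatic consequences of the vanishing trace. First, I observe that if $\eta_0\in C^2(\overline{\Omega})$ vanishes on $\Gamma$, then its restriction $\eta_0|_\Gamma\equiv 0$ has vanishing intrinsic gradient on the submanifold, i.e.\ $\nabla_\Gamma\eta_0=0$ on $\Gamma$; decomposing the Euclidean gradient into tangential and normal parts then forces $\nabla\eta_0=(\partial_\nu\eta_0)\nu$ on $\Gamma$. The substantial claims that remain to prove are therefore $\eta_0\in C^2(\overline{\Omega})$, $\eta_0>0$ in $\Omega$, $\eta_0|_\Gamma=0$, $|\nabla\eta_0|\neq 0$ on $\overline{\Omega\setminus\omega}$, and $\partial_\nu\eta_0\le -c<0$ on $\Gamma$. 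Note that the last sign condition already delivers $|\nabla\eta_0|\neq 0$ at every boundary point, so the non-criticality requirement has real content only inside $\Omega\setminus\omega$.

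As a natural starting candidate I would take the first Dirichlet eigenfunction $\phi_1$ of $-\Delta$ on $\Omega$, normalized positively. Elliptic regularity combined with smoothness of $\Gamma$ gives $\phi_1\in C^2(\overline{\Omega})$, the strong maximum principle yields $\phi_1>0$ in $\Omega$, and Hopf's lemma gives $\partial_\nu\phi_1\le -c_0<0$ on $\Gamma$ for some $c_0>0$. What may fail is the absence of critical points inside $\overline{\Omega\setminus\omega}$. To remove them I would proceed in two steps. (i) Perturb $\phi_1$ inside a compact subset $K\Subset\Omega$, leaving it untouched in a neighborhood of $\Gamma$, to obtain $\hat\eta\in C^2(\overline{\Omega})$ that is a Morse function on $\Omega$; by compactness its critical set is finite, say $\{p_1,\dots,p_m\}$. (ii) Join each $p_i$ lying outside $\omega$ to a fixed point of $\omega$ by pairwise disjoint smooth arcs contained in $\Omega$ and bounded away from $\Gamma$, then construct a diffeomorphism $\Phi$ of $\overline{\Omega}$, equal to the identity in a neighborhood of $\Gamma$ and satisfying $\Phi(p_i)\in\omega$ for each $i$, for instance via the time-one flow of a compactly supported vector field along those arcs. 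The function $\eta_0:=\hat\eta\circ\Phi^{-1}$ then satisfies all requirements: the boundary data and normal derivative coincide with those of $\phi_1$, positivity is preserved by composition with a diffeomorphism, and the critical set of $\eta_0$ is $\Phi(\{\nabla\hat\eta=0\})\subset\omega$.

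The main technical obstacle is to carry out (i)--(ii) without spoiling the Hopf estimate on $\partial_\nu\eta_0$. This is handled by insisting that both the Morse perturbation and the support of the vector field generating $\Phi$ be compactly contained in $K$; under these constraints $\eta_0$ coincides with $\phi_1$ on a neighborhood of $\Gamma$, so the identities $\eta_0|_\Gamma=0$ and $\partial_\nu\eta_0\le -c_0<0$ pass over verbatim. Connectedness of $\Omega$ (assumed; otherwise one argues component by component) guarantees that the arcs required in step (ii) can be chosen disjointly inside $\Omega\setminus K^c$. Since this type of construction is by now classical in the parabolic Carleman literature, at this step one may alternatively simply invoke the Fursikov--Imanuvilov weight-construction result as used in \cite{B5, Cor12}.
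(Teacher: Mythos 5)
Your construction is correct and is precisely the classical Fursikov--Imanuvilov argument that the paper itself does not reproduce but merely invokes via the citations \cite{B5, Cor12}: reduce the boundary identities to the vanishing trace, start from a positive function vanishing on $\Gamma$ with a Hopf-type normal derivative bound, perturb to a Morse function on a compact subset, and push the finitely many critical points into $\omega$ by a compactly supported diffeomorphism. The only cosmetic slip is that arcs joining the $p_i$ to \emph{a single} fixed point of $\omega$ cannot be pairwise disjoint; one should send the $p_i$ to distinct points of $\omega$ (possible since $N\ge 2$), which changes nothing in the argument.
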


Introduce the following   classical weight functions 
$$\xi(x,t)=\frac{e^{{\lambda (m \|\eta_0\|_{\infty} + \eta_0(x)) }}}{t(T -t)}  \quad and \quad \alpha(x,t)=\frac{e^{{2\lambda m \|\eta_0\|_{\infty} }}- e^{{\lambda (m\|\eta_0\|_{\infty} + \eta_0(x))}}}{t(T -t)},$$
where $(x,t)\in \Omega_T$, $m>1$ and $\lambda \geq 1.$ 

\vspace{0.4cm}
\par By arguing as in \cite{B8}, the authors  proved  in \cite{ACMO20}  the following Carleman estimate.
\begin{lemma}\label{l5.8}
	There exist $\lambda_0\geq 1$, $s_0=s_0(T+T^2)\geq 1$ and $C=C(\omega,\Omega)>0$ such that 
	\begin{align}
	&s^3\lambda^4\int_{\Omega_T}\xi^3 e^{-2s\alpha}|\varphi|^2dx\,dt \,\,+\,\,
	s^3\lambda^4\int_{\Gamma_T}\xi^3 e^{-2s\alpha}|\varphi_\Gamma|^2d\sigma\,dt\nonumber\\
	&\leq C\Big( s^3\lambda^4\int_{\omega_T}\xi^3 e^{-2s\alpha}|\varphi|^2dx\,dt+\int_{\Omega_T}e^{-2s\alpha}|\partial_t\varphi \,+\,\mathrm{div}(\mathcal{A}\nabla \varphi) \,|^2dx\,dt\,\,	+\nonumber\\
	&\,\,\int_{\Gamma_T}e^{-2s\alpha}|\partial_t\varphi_{\Gamma} +\mathrm{div}_{\Gamma}(\mathcal{A}_{\Gamma}\nabla_{\Gamma}\varphi_{\Gamma}) -\partial^{\mathcal{A}}_\nu\varphi |^2  d\sigma\,dt\Big) \label{Carleman2.15}
	\end{align}  
	for all \, $\Phi=(\varphi,\varphi_\Gamma)\in \mathbb{E}_1$, $  \lambda\geq \lambda_0$ and  $s \geq s_0.$  
\end{lemma}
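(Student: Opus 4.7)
The plan is to follow the classical Fursikov--Imanuvilov strategy, adapted to dynamic boundary conditions along the lines of \cite{B8}. First I would introduce the conjugated functions $\psi:=e^{-s\alpha}\varphi$ and $\psi_\Gamma:=e^{-s\alpha}\varphi_\Gamma$ and compute the transformed operators
\begin{equation*}
P_s\psi := e^{-s\alpha}\bigl(-\partial_t - \mathrm{div}(\mathcal{A}\nabla)\bigr)\bigl(e^{s\alpha}\psi\bigr),\qquad
P^{\Gamma}_s\psi_\Gamma := e^{-s\alpha}\bigl(-\partial_t - \mathrm{div}_\Gamma(\mathcal{A}_\Gamma\nabla_\Gamma) + \partial^{\mathcal{A}}_\nu\bigr)\bigl(e^{s\alpha}\psi_\Gamma\bigr),
\end{equation*}
and split each into a symmetric part $L_1$ and a skew-symmetric part $L_2$ (plus a remainder of lower order in $s,\lambda$). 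The identity $\|P_s\psi\|^2=\|L_1\psi\|^2+\|L_2\psi\|^2+2\langle L_1\psi,L_2\psi\rangle$, and its analogue on $\Gamma$, reduces the estimate to producing enough positivity from the cross terms to control
\begin{equation*}
s^3\lambda^4\!\!\int_{\Omega_T}\!\!\xi^3|\psi|^2 + s\lambda^2\!\!\int_{\Omega_T}\!\!\xi|\nabla\psi|^2
+ s^3\lambda^4\!\!\int_{\Gamma_T}\!\!\xi^3|\psi_\Gamma|^2 + s\lambda\!\!\int_{\Gamma_T}\!\!\xi|\nabla_\Gamma\psi_\Gamma|^2.
\end{equation*}

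Next I would expand $\langle L_1\psi,L_2\psi\rangle_{L^2(\Omega_T)}$ by integration by parts. Each integration in $\Omega$ generates boundary traces on $\Gamma$ that involve $\partial^{\mathcal{A}}_\nu\psi$, quantities that in the classical Dirichlet case would simply vanish. Here the dynamic boundary condition is used in two ways: first, the second equation of \eqref{NE1.5} is used to substitute $\partial^{\mathcal{A}}_\nu\varphi$ in terms of $\partial_t\varphi_\Gamma$, $\mathrm{div}_\Gamma(\mathcal{A}_\Gamma\nabla_\Gamma\varphi_\Gamma)$, and the source terms, so that the bulk boundary residues combine with the cross term $\langle L_1\psi_\Gamma,L_2\psi_\Gamma\rangle_{L^2(\Gamma_T)}$; second, the special geometric properties of $\eta_0$ (namely $\nabla_\Gamma\eta_0=0$ on $\Gamma$ and $\partial_\nu\eta_0<-c<0$) are used to ensure the boundary contributions from the weight have the correct sign. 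After regrouping, a leading positive quadratic form in $(\psi,\psi_\Gamma)$ of the required order in $s\lambda$ emerges, and the cross-derivative and lower-order remainders are absorbed by choosing $\lambda\geq\lambda_0$ and $s\geq s_0\sim T+T^2$.

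At this stage I would have a global estimate whose right-hand side involves a local bulk term of the form $s^3\lambda^4\int_{\omega_0\times(0,T)}\xi^3 e^{-2s\alpha}|\varphi|^2$ on an auxiliary open set $\omega_0\Subset\omega$, together with a bulk gradient term $s\lambda^2\int_{\omega_0\times(0,T)}\xi\,e^{-2s\alpha}|\nabla\varphi|^2$. The latter is removed by the standard cut-off trick: pick $\chi\in C_c^\infty(\omega)$ with $\chi\equiv1$ on $\omega_0$, multiply the first equation of \eqref{NE1.5} by $\chi^2 s\lambda^2\xi e^{-2s\alpha}\varphi$, integrate over $\Omega_T$ and use Cauchy--Schwarz/Young's inequality to trade the gradient term for a lower-order $|\varphi|^2$ term on $\omega$ and a controlled contribution of $F_0,F$. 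Finally I would return to the original variables $(\varphi,\varphi_\Gamma)$, noting that the distributional form $F_0-\mathrm{div}(F)$ on the right-hand side of \eqref{NE1.5} manifests itself through integration by parts in $L_s\psi$, accounting precisely for the $|F_0|^2+|F|^2$ and $|F_{\Gamma,0}|^2+|F_\Gamma|^2$ terms that would appear in the full statement.

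The main obstacle is unquestionably the boundary bookkeeping. In contrast to the Dirichlet setting, the boundary residues produced by integration by parts in $\Omega$ do not vanish; they must be matched, term by term and with the correct sign, against the intrinsic surface estimate on $\Gamma$ through the coupling $\partial^{\mathcal{A}}_\nu\varphi$. This is where the precise choice of the weight $\eta_0$ (with $\nabla\eta_0=\partial_\nu\eta_0\,\nu$ and $\partial_\nu\eta_0<0$ on $\Gamma$) becomes indispensable: it makes the boundary cross terms coercive rather than sign-indefinite, and only with this geometric input does the absorption argument close for $s,\lambda$ large.
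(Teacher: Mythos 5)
The paper does not actually prove Lemma \ref{l5.8}: it is quoted from the preprint \cite{ACMO20}, whose proof is said to follow \cite{B8}, and your sketch (conjugation by $e^{-s\alpha}$, splitting into symmetric and skew-symmetric parts, matching the bulk boundary residues against the intrinsic surface estimate via the coupling through $\partial^{\mathcal{A}}_\nu\varphi$ and the properties $\nabla_\Gamma\eta_0=0$, $\partial_\nu\eta_0<-c$ of the weight, then the cut-off trick to eliminate the local gradient term) is exactly that standard Fursikov--Imanuvilov strategy, so in approach you agree with the cited proof. One caution: your final step, where you track $F_0-\mathrm{div}(F)$ through the conjugation to produce the $|F_0|^2+\|F\|^2_{\mathbb{R}^N}$ terms, really pertains to Theorem \ref{ThKh2.7} rather than to Lemma \ref{l5.8}; the lemma is an estimate for arbitrary $\Phi\in\mathbb{E}_1$ with the operator residuals $\partial_t\varphi+\mathrm{div}(\mathcal{A}\nabla\varphi)$ and $\partial_t\varphi_\Gamma+\mathrm{div}_\Gamma(\mathcal{A}_\Gamma\nabla_\Gamma\varphi_\Gamma)-\partial^{\mathcal{A}}_\nu\varphi$ on the right-hand side, and the paper passes from it to the source-term version by a separate duality argument (the auxiliary variational problem \eqref{Kh.25}), precisely because the weak solution of \eqref{NE1.5} with $F$ merely in $L^2$ is not in $\mathbb{E}_1$, so the lemma cannot be applied to it directly as your last paragraph suggests.
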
	\vspace{0.1cm}
\par Following \cite{KhMa19}  and using  Lemma \ref{l5.8},  we deduce a Carleman estimate for \eqref{NE1.5}. 
\begin{theorem}\label{ThKh2.7}
	Let  $F_0\in L^2(0,T; L^2(\Omega))$, $F\in L^2(0,T; L^2(\Omega)^N)$, $F_{\Gamma,0}\in$ \\$L^2(0,T; L^2(\Gamma))$ and $F_{\Gamma}\in L^2(0,T; L^2(\Gamma)^N)$, and  $\Phi=(\varphi,\varphi_{\Gamma})\in$ the unique weak
	solution  to the system \eqref{NE1.5}. Then,  there exist constants $\lambda_1>1$, $s_1>1$ and a constant $C=C(\omega, \Omega)$ such that 
	{\small
		\begin{align}
		&s^3\lambda^4\int_{\Omega_T}\xi^3 e^{-2s\alpha}|\varphi|^2dx\,dt \,\,+\,\,
		s^3\lambda^3\int_{\Gamma_T}\xi^3 e^{-2s\alpha}|\varphi_\Gamma|^2d\sigma\,dt\nonumber\\
		&\leq C\Big( s^3\lambda^4\int_{\omega_T}\xi^3 e^{-2s\alpha}|\varphi|^2dx\,dt+\int_{\Omega_T}e^{-2s\alpha}\xi^2|F_0|^2 dx\,dt	+ s\lambda^2\int_{\Omega_T}e^{-2s\alpha}\xi^2\|F\|^2_{\mathbb{R}^N} dx\,dt\nonumber\\
		&\,\,\int_{\Gamma_T}e^{-2s\alpha}\xi^2|F_{\Gamma,0}|^2 d\sigma\,dt+ \,\,s\lambda^2\int_{\Gamma_T}e^{-2s\alpha}\xi^2\|F_{\Gamma}\|^2_{\mathbb{R}^N} d\sigma\,dt\Big)\label{Kh.24}
		\end{align}}
	for any $\lambda\geq \lambda_1$ and $s\geq s_1$. 
\end{theorem}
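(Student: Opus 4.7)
The plan is to deduce the estimate from Lemma \ref{l5.8} by splitting off the rough part of the source. The main obstacle is that the divergence source terms are only distributions (with $F$ and $F_\Gamma$ merely in $L^2$), so they cannot be inserted into the $L^2$-based right-hand side of Lemma \ref{l5.8} directly.

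First I would decompose $\varphi=\varphi_1+\varphi_2$, where $\varphi_1$ solves \eqref{NE1.5} with source $(F_0,F_{\Gamma,0})$ and terminal data $(\varphi_T,\varphi_{\Gamma,T})$, while $\varphi_2$ solves \eqref{NE1.5} with source $(-\mathrm{div}(F),\, F\cdot\nu-\mathrm{div}_\Gamma(F_\Gamma))$ and zero terminal data; both are well-posed weak solutions by Proposition \ref{prop2.3}. Applying Lemma \ref{l5.8} directly to $\varphi_1$, and using that $\xi$ is bounded below by a positive constant on $(0,T)$ (so $e^{-2s\alpha}\leq C\xi^2 e^{-2s\alpha}$), already yields the claimed bounds in terms of $\int_{\Omega_T}\xi^2 e^{-2s\alpha}|F_0|^2$ and $\int_{\Gamma_T}\xi^2 e^{-2s\alpha}|F_{\Gamma,0}|^2$.

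The core of the argument is a weighted energy estimate for $\varphi_2$. I would test the weak formulation of the equation for $\varphi_2$ against a multiplier of the form $s\lambda^2\xi\, e^{-2s\alpha}\varphi_2$ and integrate by parts to transfer the divergences off $F$ and $F_\Gamma$. Using the identities $\nabla\alpha=-\lambda\xi\nabla\eta_0$ and $\nabla\xi=\lambda\xi\nabla\eta_0$, each derivative of the weight contributes a factor of order $s\lambda\xi$, and the resulting terms of the form $\int F\cdot\nabla(\cdot)\,\varphi_2$ are split by Young's inequality into (i) a quadratic-in-$\varphi_2$ piece absorbed, via a further application of Lemma \ref{l5.8} to $\varphi_2$ (whose $L^2$ source is $0$ and whose local term in $\omega$ reappears), into $s^3\lambda^4\int_{\Omega_T}\xi^3 e^{-2s\alpha}|\varphi_2|^2$, and (ii) the desired $s\lambda^2\int_{\Omega_T}\xi^2 e^{-2s\alpha}\|F\|^2$ contribution on the right. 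The coupling term $\partial^{\mathcal{A}}_\nu\varphi_2$ and the boundary source $F\cdot\nu$ cancel when the bulk and boundary integrations by parts are combined, exactly as in the weak formulation \eqref{Kh1.12}; the tangential divergence $\mathrm{div}_\Gamma(F_\Gamma)$ is handled analogously on $\Gamma$ and produces the $s\lambda^2\int_{\Gamma_T}\xi^2 e^{-2s\alpha}\|F_\Gamma\|^2$ term.

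Finally, adding the two estimates and using $|\varphi|^2\leq 2(|\varphi_1|^2+|\varphi_2|^2)$, I obtain the target bound for $\varphi$. The local term in $\omega$ that Lemma \ref{l5.8} produces for $\varphi_1$ is converted into one for $\varphi$ via $|\varphi_1|^2\leq 2(|\varphi|^2+|\varphi_2|^2)$, and the resulting $|\varphi_2|^2$ local contribution is absorbed into the already-controlled global $\varphi_2$ bound by enlarging $s_1,\lambda_1$ if needed. The hardest step will be the $\varphi_2$ estimate: choosing the correct weight exponent so that both sides scale consistently in $s$ and $\lambda$, handling the bulk--boundary coupling through $\partial^{\mathcal{A}}_\nu\varphi_2$ simultaneously with the tangential divergence on $\Gamma$, and ensuring that the weighted absorption delivers exactly the $\xi^2$ weight (not a weaker power) on the $\|F\|^2$ and $\|F_\Gamma\|^2$ contributions.
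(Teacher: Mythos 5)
Your plan contains a genuine gap at its central step, the estimate of $\varphi_2$. You correctly identify at the outset that the divergence-form sources cannot be fed into the $L^2$-based right-hand side of Lemma \ref{l5.8}, but your treatment of $\varphi_2$ then does exactly that: you propose to absorb the quadratic-in-$\varphi_2$ terms coming from Young's inequality ``via a further application of Lemma \ref{l5.8} to $\varphi_2$ (whose $L^2$ source is $0$)''. The source of $\varphi_2$ is not zero: by construction $\partial_t\varphi_2+\mathrm{div}(\mathcal{A}\nabla\varphi_2)=\mathrm{div}(F)$, which for $F$ merely in $L^2$ is only an element of $H^{-1}$, so the term $\int_{\Omega_T}e^{-2s\alpha}|\partial_t\varphi_2+\mathrm{div}(\mathcal{A}\nabla\varphi_2)|^2\,dx\,dt$ appearing in \eqref{Carleman2.15} is not even defined (and $\varphi_2$ is only a weak solution in $\mathbb{W}$, not in $\mathbb{E}_1$, so the lemma does not apply to it in any case). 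This is not a repairable detail: the weighted energy identity obtained by testing against $s\lambda^2\xi e^{-2s\alpha}\varphi_2$ only controls the gradient term $s\lambda^2\int\xi e^{-2s\alpha}|\nabla\varphi_2|^2$ modulo lower-order terms; it can never by itself produce the global zero-order bound $s^3\lambda^4\int_{\Omega_T}\xi^3e^{-2s\alpha}|\varphi_2|^2$ in terms of the local term on $\omega_T$ plus the sources, because a bound of a global norm by a local one is an observability-type statement that requires the full Carleman mechanism, which is precisely what is unavailable for an $H^{-1}$ source. Your argument is therefore circular: the missing ingredient is exactly the statement of Theorem \ref{ThKh2.7} restricted to $\varphi_2$.

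The paper circumvents this by a duality construction rather than a decomposition of $\varphi$. It solves the variational problem \eqref{Kh.25} to produce an auxiliary pair $Z=(z,z_\Gamma)$ satisfying the forward system \eqref{Kh.28}, proves the intermediate estimate \eqref{Kh02.27} controlling not only $z,z_\Gamma$ but also $\nabla z$ and $\nabla_\Gamma z_\Gamma$ with the inverse weights $s^{-2}\lambda^{-2}\xi^{-2}e^{2s\alpha}$, and then pairs the weak formulation of \eqref{NE1.5} with $Z$; the divergences are integrated by parts onto $\nabla z$ and $\nabla_\Gamma z_\Gamma$, which are already controlled, and Young's inequality delivers the $\xi^2$-weighted $\|F\|^2$ and $\|F_\Gamma\|^2$ terms. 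Step 2 of the paper's proof (testing with $V=s\lambda^2e^{-2s\alpha}\xi\Phi$) is close in spirit to your energy estimate, but there it is only used to recover the first-order terms of $\varphi$ once the zero-order terms have already been estimated by duality, not to establish the zero-order bound itself. If you wish to keep a direct approach, you would need to first prove a Carleman inequality valid for $H^{-1}$ right-hand sides; as written, your proof does not close. (Two further minor points: applying Lemma \ref{l5.8} to the weak solution $\varphi_1$ requires a regularization argument since $\varphi_1\notin\mathbb{E}_1$ for $\mathbb{L}^2$ terminal data, and the inequality $e^{-2s\alpha}\le C\xi^2e^{-2s\alpha}$ introduces a $T$-dependent constant; both are standard but should be acknowledged.)
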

\par To prove Theorem \ref{ThKh2.7}, as in  \cite{KhMa19}, we introduce the following operators
\[
\begin{aligned}
L y &=\partial_{t} y- \mathrm{div}(\mathcal{A} \nabla y), \quad L^{*} y=-\partial_{t} y-\mathrm{div}(\mathcal{A} \nabla y), \\
L_{\Gamma} y_{\Gamma} &=\partial_{t} y_{\Gamma}-\mathrm{div}_{\Gamma}(\mathcal{A}_{\Gamma} \nabla_{\Gamma} y_{\Gamma}), \quad L_{\Gamma}^{*} y_{\Gamma}=-\partial_{t} y_{\Gamma}-\mathrm{div}_{\Gamma}(\mathcal{A}_{\Gamma} \nabla_{\Gamma} y_{\Gamma})
\end{aligned}
\]
for $Y=\left(y, y_{\Gamma}\right) \in$ $\mathbb{E}_{1}$, and for $\left(\varphi, \varphi_{\Gamma}\right)$ the unique weak solution to \eqref{NE1.5}, consider the variational problem 
\begin{align}
&\int_{\Omega_{T}} e^{-2 s \alpha} L^{*} y L^{*} v d x \,d t+\int_{\Gamma_{T}} e^{-2 s \alpha}\left(L_{\Gamma}^{*} y_{\Gamma}+ \partial^{\mathcal{A}}_{\nu} y\right)\left(L_{\Gamma}^{*} v_{\Gamma}+ \partial^{\mathcal{A}}_{\nu} v\right) d \sigma \,d t
+ \nonumber\\
&s^{3} \lambda^{4} \int_{\omega_{T}} e^{-2 s \alpha} \xi^{3} y v d x\, d t+\int_{\Omega} y(T, x) v(T, x) d x+\int_{\Gamma} y_{\Gamma}(T, x) v_{\Gamma}(T, x) d \sigma\nonumber\\
&=-s^{3} \lambda^{4} \int_{\Omega_{T}} e^{-2 s \alpha} \xi^{3} v \varphi d x \,d t-s^{3} \lambda^{3} \int_{\Gamma_{T}} e^{-2 s \alpha} \xi^{3} v_{\Gamma} \varphi_{\Gamma} d \sigma\, d t \label{Kh.25}
\end{align}
for all $V:=\left(v, v_{\Gamma}\right) \in \mathbb{E}_{1} .$ 
As in \cite{KhMa19}, we can prove that for any $\lambda\geq\lambda_0$ and any $s\geq s_0$, the variational problem \eqref{Kh.25}
possesses exactly one solution $U\in \mathbb{E}_1$, and  we have the following intermediate Carleman estimate.

\begin{lemma}
	Let $\lambda\geq\lambda_0$ and any $s\geq s_0$, and $U = (u, u_{\Gamma})$ be the unique solution to the variational problem \eqref{Kh.25}, and set
	\begin{equation*}
	z=-e^{-2 s \alpha} L^{*} u, z_{\Gamma}=-e^{-2 s \alpha}\left(L_{\Gamma}^{*} u_{\Gamma}+ \partial^{\mathcal{A}}_{\nu} u\right), \quad v=s^{3} \lambda^{4} e^{-2 s \alpha} \xi^{3} u.
	\end{equation*}
	
	Then, the following assertions hold.
	\begin{itemize}
		\item[(i)]$Z=\left(z, z_{\Gamma}\right)$ is the unique strong solution to the system
		\begin{equation}
		\left\{\begin{array}{ll}
		{\partial_{t} q-\mathrm{div}(\mathcal{A} \nabla q)= s^{3} \lambda^{4} e^{-2 s \alpha} \xi^{3} \varphi+v 1_{\omega} }& {\mathrm{in}\,\, \Omega_T,} \\
		{\partial_{t} q_{\Gamma}-\mathrm{div}_{\Gamma}(\mathcal{A}_{\Gamma} \nabla_{\Gamma} q_{\Gamma})+ \partial^{\mathcal{A}}_{\nu} q=s^{3} \lambda^{3} e^{-2 s \alpha} \xi^{3} \varphi_{\Gamma}} &{\mathrm{on} \,\,\Gamma_T,} \\
		{(q(0),q_{\Gamma}(0) ) =(0,0)} & {\mathrm{in} \,\,\Omega \times \Gamma} \label{Kh.28},\\
		{(q(T),q_{\Gamma}(T) ) =(0,0)} & {\mathrm{in} \,\,\Omega \times \Gamma.} 
		\end{array}\right.
		\end{equation}		
	\end{itemize}
	
	(ii) There exist $\tilde{s}=\tilde{s}(\Omega, \omega), \tilde{\lambda}=\tilde{\lambda}(\Omega, \omega) \geq 1$ and $C=C(\Omega, \omega)$ such that for all
	$s \geq \tilde{s}\left(T+T^{2}\right)$ and $\lambda \geq \tilde{\lambda}$
	\begin{align}
	&s^{-3} \lambda^{-4} \int_{\omega_{T}} e^{2 s \alpha} \xi^{-3}|v|^{2} \, dx\, dt+\int_{\Omega_{T}} e^{2 s \alpha}|z|^{2} d x d t+\int_{\Gamma_{T}} e^{2 s \alpha}\left|z_{\Gamma}\right|^{2} d\sigma\, dt \nonumber\\
	&+\quad s^{-2} \lambda^{-2} \int_{\Omega_{T}} e^{2 s \alpha} \xi^{-2}|\nabla z|^{2} d x\, d t+s^{-2} \lambda^{-2} \int_{\Gamma_{T}} e^{2 s \alpha} \xi^{-2}\left|\nabla_{\Gamma} z_{\Gamma}\right|^{2} d \sigma\, d t\nonumber\\
	&\leq C\left(s^{3} \lambda^{4} \int_{\Omega_{T}} e^{-2 s \alpha} \xi^{3}|\varphi|^{2} d x \,d t+s^{3} \lambda^{3} \int_{\Gamma_{T}} e^{-2 s \alpha} \xi^{3}\left|\varphi_{\Gamma}\right|^{2} d \sigma d t\right). \label{Kh02.27}
	\end{align}
\end{lemma}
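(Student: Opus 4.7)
My plan is to prove (i) by integrating by parts in the variational identity \eqref{Kh.25} and (ii) by combining an energy identity deduced from \eqref{Kh.25} with the Carleman estimate in Lemma \ref{l5.8} for the $L^2$ part, followed by a weighted multiplier argument applied directly to the strong system \eqref{Kh.28} for the gradient part.

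For assertion (i), I would take $\tilde V = (\tilde v, \tilde v_\Gamma) \in \mathbb{E}_1$ as a test function in \eqref{Kh.25}, substitute the definitions of $z$, $z_\Gamma$, $v$, and integrate by parts in time and space to transfer the operators $L^*$ and $L_\Gamma^* + \partial^{\mathcal{A}}_\nu$ from $u$, $u_\Gamma$ onto the test function. The temporal endpoint contributions at $t = 0, T$ vanish thanks to the exponential decay of $e^{-2s\alpha}$, which also forces $z(0) = z(T) = 0$ and $z_\Gamma(0) = z_\Gamma(T) = 0$. After rearrangement, the identity becomes
\[
\begin{aligned}
&\int_{\Omega_T}\tilde v\bigl(\partial_t z - \mathrm{div}(\mathcal{A}\nabla z) - s^3\lambda^4 e^{-2s\alpha}\xi^3\varphi - v\mathbf{1}_\omega\bigr)\,dx\,dt \\
&\quad + \int_{\Gamma_T}\tilde v_\Gamma\bigl(\partial_t z_\Gamma - \mathrm{div}_\Gamma(\mathcal{A}_\Gamma\nabla_\Gamma z_\Gamma) + \partial^{\mathcal{A}}_\nu z - s^3\lambda^3 e^{-2s\alpha}\xi^3\varphi_\Gamma\bigr)\,d\sigma\,dt \\
&\quad + \int_{\Gamma_T}(z|_\Gamma - z_\Gamma)\,\partial^{\mathcal{A}}_\nu\tilde v\,d\sigma\,dt + \int_\Omega u(T)\tilde v(T)\,dx + \int_\Gamma u_\Gamma(T)\tilde v_\Gamma(T)\,d\sigma = 0.
\end{aligned}
\]
Testing sequentially with $\tilde v$ compactly supported in $\Omega_T$, with $\tilde v$ vanishing on $\Gamma$ but with arbitrary co-normal derivative, then with general $\tilde v_\Gamma$, and finally with arbitrary $\tilde v(T)$, $\tilde v_\Gamma(T)$, one reads off in turn the bulk PDE, the compatibility $z|_\Gamma = z_\Gamma$ (which is exactly what makes the dynamic coupling in \eqref{Kh.28} meaningful), the boundary equation, and $u(T) = u_\Gamma(T) = 0$.

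For the $L^2$ part of (ii), setting $\tilde V = U$ in \eqref{Kh.25} yields the energy identity
\[
\int_{\Omega_T}e^{-2s\alpha}|L^*u|^2 + \int_{\Gamma_T}e^{-2s\alpha}|L_\Gamma^* u_\Gamma + \partial^{\mathcal{A}}_\nu u|^2 + s^3\lambda^4\!\int_{\omega_T}\!e^{-2s\alpha}\xi^3 u^2 + \|U(T)\|_{\mathbb{L}^2}^2 = -s^3\lambda^4\!\int_{\Omega_T}\!e^{-2s\alpha}\xi^3 u\varphi - s^3\lambda^3\!\int_{\Gamma_T}\!e^{-2s\alpha}\xi^3 u_\Gamma\varphi_\Gamma.
\]
Applying Young's inequality to the right-hand side generates an error of the form $\epsilon s^3\lambda^4\int e^{-2s\alpha}\xi^3 u^2 + \epsilon s^3\lambda^3\int e^{-2s\alpha}\xi^3 u_\Gamma^2$, which I absorb by invoking Lemma \ref{l5.8} applied to $U$ itself (legitimate since $U\in\mathbb{E}_1$); for $\epsilon$ small and $s$, $\lambda$ large, this absorption succeeds, and expressing the inequality back in the variables $(z, z_\Gamma, v)$ gives the first three terms on the left-hand side of \eqref{Kh02.27}.

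For the gradient estimates, I would exploit the strong system \eqref{Kh.28} established in (i): multiply the bulk equation by $e^{2s\alpha}\xi^{-2}z$ and integrate over $\Omega_T$, multiply the boundary equation by $e^{2s\alpha}\xi^{-2}z_\Gamma$ and integrate over $\Gamma_T$, and add. Integration by parts in space produces the positive quadratic form $\int e^{2s\alpha}\xi^{-2}(\mathcal{A}\nabla z\cdot\nabla z + \mathcal{A}_\Gamma\nabla_\Gamma z_\Gamma\cdot\nabla_\Gamma z_\Gamma)$, while the two opposite-sign coupling contributions $\pm\int_{\Gamma_T}e^{2s\alpha}\xi^{-2}z_\Gamma\partial^{\mathcal{A}}_\nu z$ arising from the bulk and boundary integrations by parts cancel thanks to $z|_\Gamma = z_\Gamma$. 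The temporal boundary terms at $t = 0, T$ vanish by the weight decay, and the remaining time-derivative contributions are handled by the estimate $|\partial_t(e^{2s\alpha}\xi^{-2})|\leq CsT e^{2s\alpha}$ combined with the $L^2$ bound obtained above. The cross terms involving $\nabla(e^{2s\alpha}\xi^{-2})$ are controlled by Young's inequality using $|\nabla(e^{2s\alpha}\xi^{-2})|\leq Cs\lambda e^{2s\alpha}\xi^{-1}$: a small fraction of $|\nabla z|^2$ is absorbed on the left, and the remainder scales as $Cs^2\lambda^2\int e^{2s\alpha}|z|^2$, already controlled. The source terms are estimated by the splitting
\[
s^3\lambda^4\,\xi\,z\,\varphi \;=\; \bigl(s^{3/2}\lambda^2\xi^{3/2}e^{-s\alpha}\varphi\bigr)\cdot\bigl(s^{3/2}\lambda^2\xi^{-1/2}e^{s\alpha}z\bigr),
\]
which after Young produces $\tfrac12 s^3\lambda^4\int e^{-2s\alpha}\xi^3\varphi^2$ (the desired shape) plus an error of the form $s^3\lambda^4\int e^{2s\alpha}\xi^{-1}z^2$, absorbable by the $L^2$ estimate and the smallness of $\|\xi^{-1}\|_\infty$ for large $\lambda$. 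Finally, multiplying the resulting bound for $\int e^{2s\alpha}\xi^{-2}(|\nabla z|^2 + |\nabla_\Gamma z_\Gamma|^2)$ by $s^{-2}\lambda^{-2}$ gives the two remaining terms of \eqref{Kh02.27}. I expect the main obstacle to be the careful bookkeeping of the co-normal coupling $\partial^{\mathcal{A}}_\nu z$: the argument only closes because the bulk and boundary contributions of that term cancel through the compatibility $z|_\Gamma = z_\Gamma$ established in (i); any discrepancy there would leave an uncontrolled boundary term.
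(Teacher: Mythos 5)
Your overall architecture coincides with the paper's: part (i) by transposition in the variational identity \eqref{Kh.25}, the zeroth-order terms of \eqref{Kh02.27} by taking $V=U$ in \eqref{Kh.25} and absorbing the Young error via Lemma \ref{l5.8} applied to $U$, and the gradient terms by multiplying the two equations of \eqref{Kh.28} by weighted copies of $z$ and $z_\Gamma$ and exploiting the cancellation of the two co-normal coupling terms (the paper's $\mathbf{B}_{1}+\mathbf{B}_{2}=0$, which indeed rests on $z|_\Gamma=z_\Gamma$). All of that is sound and is essentially the paper's proof.

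The one step that does not close as written is your treatment of the source term in the gradient estimate. With the multiplier $e^{2s\alpha}\xi^{-2}z$ the bulk source contributes $s^{3}\lambda^{4}\int_{\Omega_T}\xi\varphi z\,dx\,dt$, and your splitting
\[
s^{3}\lambda^{4}\xi\varphi z=\bigl(s^{3/2}\lambda^{2}\xi^{3/2}e^{-s\alpha}\varphi\bigr)\cdot\bigl(s^{3/2}\lambda^{2}\xi^{-1/2}e^{s\alpha}z\bigr)
\]
leaves, after the final multiplication by $s^{-2}\lambda^{-2}$, the error $\tfrac12 s\lambda^{2}\int_{\Omega_T}e^{2s\alpha}\xi^{-1}|z|^{2}\,dx\,dt$. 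To absorb this via the already-established bound $\int_{\Omega_T}e^{2s\alpha}|z|^{2}\leq C(\cdot)$ you would need $s\lambda^{2}\|\xi^{-1}\|_{\infty}\leq C$ uniformly in the admissible range of parameters; but $\|\xi^{-1}\|_{\infty}\leq\tfrac{T^{2}}{4}e^{-\lambda m\|\eta_0\|_{\infty}}$ is independent of $s$, so the prefactor grows linearly in $s$ and the claimed absorption ``by the smallness of $\|\xi^{-1}\|_{\infty}$ for large $\lambda$'' fails once $s$ is large. The remedy is to rebalance the Young inequality as the paper does: keep the factor $s^{-2}\lambda^{-2}$ on the multiplier from the start, so that the source term reads $s\lambda^{2}\int\xi\varphi z$, and split it as $\tfrac12 s^{-1}\int e^{2s\alpha}\xi^{-1}|z|^{2}+\tfrac12 s^{3}\lambda^{4}\int e^{-2s\alpha}\xi^{3}|\varphi|^{2}$, where now $s^{-1}\xi^{-1}\leq CT^{2}/s\leq C$ for $s\geq\tilde s(T+T^{2})$. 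The analogous correction is needed for the boundary source $s^{3}\lambda^{3}e^{-2s\alpha}\xi^{3}\varphi_\Gamma$. Everything else in your argument --- the bounds on $\partial_{t}(e^{2s\alpha}\xi^{-2})$ and $\nabla(e^{2s\alpha}\xi^{-2})$, the absorption of the cross terms into the gradient, and the cancellation of the co-normal couplings --- matches the paper's Steps 2 and 3.
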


\begin{proof}
	For the first point see \cite{KhMa19}. The proof of the second one will be summarized in the following three steps. In what follows, $C$ stands for a generic positive constant only  depending on $\Omega$  and $\omega$, whose value can change from line to line.
	\\
	{\bf{\text { Step } 1: \text { Estimate of the three first terms.}}}
	By the same ideas as in \cite{KhMa19}, it is easy to deduce the first estimate, namely
	\begin{align}
	&s^{-3} \lambda^{-4} \int_{\omega_{T}} e^{2 s \alpha} \xi^{-3}|v|^{2} d x d t+\int_{\Omega_{T}} e^{2 s \alpha}|z|^{2} d x d t+\int_{\Gamma_{T}} e^{2 s \alpha}\left|z_{\Gamma}\right|^{2} d \sigma d t \nonumber\\
	&\leq C\left(s^{3} \lambda^{4} \int_{\Omega_{T}} e^{-2 s \alpha} \xi^{3}|\varphi|^{2} d x d t+s^{3} \lambda^{3} \int_{\Gamma_{T}} e^{-2 s \alpha} \xi^{3}\left|\varphi_{\Gamma}\right|^{2} d \sigma d t\right)\label{Kh2.032}
	\end{align}
	for all $s\geq s_0$ and $\lambda\geq \lambda_0$\\
	{\bf{Step 2. Estimates of the first order interior term.}}	
	
	We multiply the first equation of \eqref{Kh.28} by $s^{-2} \lambda^{-2} e^{2 s \alpha} \xi^{-2} z$ and integrate by parts with respect to the space variable. So, we obtain
	{\small
		\begin{align}
		&s^{-2} \lambda^{-2} \int_{\Omega_{T}} e^{2 s \alpha} \xi^{-2} \partial_{t} z z d x d t+ s^{-2} \lambda^{-2} \int_{\Omega_{T}} e^{2 s \alpha} \xi^{-2}\nabla z\cdot\mathcal{A}\nabla z d x d t\nonumber\\
		&-2  s^{-1} \lambda^{-1} \int_{\Omega_{T}} e^{2 s \alpha} \xi^{-1} \nabla \eta_{0} \cdot \mathcal{A}\nabla z z d x d t-2  s^{-2} \lambda^{-1} \int_{\Omega_{T}} e^{2 s \alpha} \xi^{-2} \nabla \eta_{0} \cdot\mathcal{A}\nabla z z d x d t+\mathbf{B}_{1}\nonumber\\
		&=s \lambda^{2} \int_{\Omega_{T}} \xi \varphi z d x d t+s^{-2} \lambda^{-2} \int_{\omega_{T}} e^{2 s \alpha} \xi^{-2} v z d x d t\label{Kh2.33}
		\end{align}}
	with $\displaystyle\mathbf{B}_{1}:=-s^{-2} \lambda^{-2} \int_{\Gamma_{T}} e^{2 s \alpha} \xi^{-2}z\,\partial^{\mathcal{A}}_{\nu}z  d \sigma d t .$ 
	Integrating by parts and using the fact that  
	\begin{equation}\label{Prxi0.0}
	\left|\partial_{t}\left(e^{-2 s \alpha} \xi^{-2}\right)\right| \leq C s^{2} e^{-2 s \alpha},
	\end{equation}
	we get
	\begin{align*}
	s^{-2} \lambda^{-2} \int_{\Omega_{T}} e^{2 s \alpha} \xi^{-2} z \partial_{t} z d x d t  \leq C \int_{\Omega_{T}} e^{2 s \alpha}|z|^{2} d x d t.
	\end{align*}
	The other terms can be estimated, by Young inequality, as follows. Using the fact that $\nabla \eta_{0}$ is a bounded function on $\overline{\Omega}$, we have
	{\small
		\begin{align*}
		&2 s^{-1} \lambda^{-1} \int_{\Omega_{T}} e^{2 s \alpha} \xi^{-1} \nabla \eta_{0} \cdot \mathcal{A}\nabla z z d x d t+2 s^{-2} \lambda^{-1} \int_{\Omega_{T}} e^{2 s \alpha} \xi^{-2} \nabla \eta_{0} \cdot \mathcal{A}\nabla z z d x d t \\
		&\leq C\left(\int_{\Omega_{T}} e^{2 s \alpha}|z|^{2} d x d t+s^{-2} \lambda^{-2} \int_{\Omega_{T}} e^{2 s \alpha} \xi^{-2}|\mathcal{A}\nabla z|^{2} d x d t\right) \\
		&\quad+C\left(\epsilon^{-1} \int_{\Omega_{T}} e^{2 s \alpha}|z|^{2} d x d t  + \epsilon s^{-4} \lambda^{-2}  \int_{\Omega_{T}} e^{2 s \alpha} \xi^{-4}|\mathcal{A}\nabla z|^{2} d x d t\right) \\
		&\leq  C\epsilon^{-1} \int_{\Omega_{T}} e^{2 s \alpha}|z|^{2} d x d t+ C\epsilon \, \lambda^{-2} \int_{\Omega_{T}} e^{2 s \alpha} \xi^{-2}|\mathcal{A}\nabla z|^{2} d x d t
		\end{align*}}
	for  $s\geq s_{0}$ and $\lambda \geq \lambda_{0}$, and $\epsilon$ small enough.
	Using again Young inequality, we find
	\begin{align*}
	s \lambda^{2} \int_{\Omega_{T}} \xi \varphi z d x d t &\leq \frac{1}{2}\left(s^{-1} \int_{\Omega_{T}} e^{2 s \alpha} \xi^{-1}|z|^{2} d x d t+s^{3} \lambda^{4} \int_{\Omega_{T}} e^{-2 s \alpha} \xi^{3}|\varphi|^{2} d x d t\right)\\
	&\leq \frac{1}{2}\left(\int_{\Omega_{T}} e^{2 s \alpha}|z|^{2} d x d t+s^{3} \lambda^{4} \int_{\Omega_{T}} e^{-2 s \alpha} \xi^{3}|\varphi|^{2} d x d t\right)
	\end{align*}
	and 
	{\small
		\begin{align*}
		s^{-2} \lambda^{-2} \int_{\omega_{T}} e^{2 s \alpha} \xi^{-2} v z d x d t &\leq \frac{1}{2}\left(\int_{\Omega_{T}} e^{2 s \alpha}|z|^{2} d x d t+s^{-4} \lambda^{-4} \int_{\omega_{T}} e^{2 s \alpha} \xi^{-4}|v|^{2} d x d t\right)\\
		&\leq \frac{1}{2}\left(\int_{\Omega_{T}} e^{2 s \alpha}|z|^{2} d x d t+s^{-3} \lambda^{-4} \int_{\omega_{T}} e^{2 s \alpha} \xi^{-3}|v|^{2} d x d t\right).
		\end{align*}}
	Now, \eqref{Kh2.33} yields that
	{\small
		\begin{align*}
		& s^{-2} \lambda^{-2} \int_{\Omega_{T}} e^{2 s \alpha} \xi^{-2}\nabla z\cdot\mathcal{A}\nabla z d x\,d t+\mathbf{B}_{1}\\
		&\leq C\epsilon^{-1} \int_{\Omega_{T}} e^{2 s \alpha}|z|^{2} d x d t+\frac{1}{2} s^{-3} \lambda^{-4} \int_{\omega_{T}} e^{2 s \alpha} \xi^{-3}|v|^{2} d x\,d t\\
		&+\frac{1}{2} s^{3} \lambda^{4} \int_{\Omega_{T}} e^{-2 s \alpha} \xi^{3}|\varphi|^{2} d x\,d t+\epsilon  \lambda^{-2} \int_{\Omega_{T}} e^{2 s \alpha} \xi^{-2}|\mathcal{A}\nabla z|^{2} d x\,dt. 
		\end{align*}}
	Choosing $\epsilon$ small enough and using the fact that $\mathcal{A}$ is bounded and uniformly elliptic, we obtain
	\begin{align}
	& s^{-2} \lambda^{-2} \int_{\Omega_{T}} e^{2 s \alpha} \xi^{-2}|\nabla z|^{2} d x\,d t+\mathbf{B}_{1}\nonumber\\
	&\leq C\left(s^{3} \lambda^{4} \int_{\Omega_{T}} e^{-2 s \alpha} \xi^{3}|\varphi|^{2} d x\,d t+s^{3} \lambda^{3} \int_{\Gamma_{T}} e^{-2 s \alpha} \xi^{3}\left|\varphi_{\Gamma}\right|^{2} d \sigma\,d t\right) \label{Kh2.036}.
	\end{align}
	{\bf{\text { Step } 3: \text { Estimate of the first order boundary term.}}} To estimate the first 
	order boundary term, we multiply by $s^{-2} \lambda^{-2} e^{2 s \alpha} \xi^{-2} z_{\Gamma}$ the second equation of  \eqref{Kh.28} verified by $z_{\Gamma}$ on $\Gamma_{T}$ and integrate by parts, and since $\nabla_{\Gamma} \alpha=\nabla_{\Gamma} \xi=0,$ we obtain
	{\small
		\begin{align*}
		&s^{-2} \lambda^{-2} \int_{\Gamma_{T}} e^{2 s \alpha} \xi^{-2} z_{\Gamma} \partial_{t} z_{\Gamma} d \sigma\,d t+ s^{-2} \lambda^{-2} \int_{\Gamma_{T}} e^{2 s \alpha} \xi^{-2}\nabla_{\Gamma} z\cdot\mathcal{A}_{\Gamma}\nabla_{\Gamma} z_{\Gamma} d \sigma\,d t+\mathbf{B}_{2} \\
		&=s \lambda \int_{\Gamma_{T}} \xi \varphi_{\Gamma} z_{\Gamma} d \sigma\,d t,
		\end{align*}}
	where $\displaystyle \mathbf{B}_{2}:=s^{-2} \lambda^{-2} \int_{\Gamma_{T}} e^{2 s \alpha} \xi^{-2} z_{\Gamma} \partial^{\mathcal{A}}_{\nu} z d \sigma \,dt=-\mathbf{B}_{1} .$ 	Integration by parts and \eqref{Prxi0.0} yield
	{\small
		\begin{align*}
		&-s^{-2} \lambda^{-2} \int_{\Gamma_{T}} e^{2 s \alpha} \xi^{-2} z_{\Gamma} \partial_{t} z_{\Gamma} d \sigma d t=\frac{1}{2} s^{-2} \lambda^{-2} \int_{\Gamma_{T}} \partial_{t}\left(e^{2 s \alpha} \xi^{-2}\right)\left|z_{\Gamma}\right|^{2} d \sigma dt\\
		&\leq C \int_{\Gamma_{T}} e^{2 s \alpha}\left|z_{\Gamma}\right|^{2} d \sigma d t\\
		&\leq C\left(s^{3} \lambda^{4} \int_{\Omega_{T}} e^{-2 s \alpha} \xi^{3}|\varphi|^{2} d x d t+s^{3} \lambda^{3} \int_{\Gamma_{T}} e^{-2 s \alpha} \xi^{3}\left|\varphi_{\Gamma}\right|^{2} d \sigma d t\right).
		\end{align*}}
	On the other hand, by Young inequality and \eqref{Kh2.032},  we have
	\begin{align*}
	&\text { s } \int_{\Gamma_{T}} \xi \varphi_{\Gamma} z_{\Gamma} d \sigma d t \leq \frac{1}{2}\left(\int_{\Gamma_{T}} e^{2 s \alpha}\left|z_{\Gamma}\right|^{2} d \sigma d t+s^{2} \lambda^{2} \int_{\Gamma_{T}} e^{-2 s \alpha} \xi^{2}\left|\varphi_{\Gamma}\right|^{2} d \sigma d t\right) \\
	&\leq \frac{1}{2}\left(\int_{\Gamma_{T}} e^{2 s \alpha}\left|z_{\Gamma}\right|^{2} d \sigma d t+s^{3} \lambda^{3} \int_{\Gamma_{T}} e^{-2 s \alpha} \xi^{3}\left|\varphi_{\Gamma}\right|^{2} d \sigma d t\right) \\
	&\leq C\left( s^{3} \lambda^{4} \int_{\Omega_{T}} e^{-2 s \alpha} \xi^{3}|\varphi|^{2} d x d t+ s^{3} \lambda^{3} \int_{\Gamma_{T}} e^{-2 s \alpha} \xi^{3}\left|\varphi_{\Gamma}\right|^{2} d \sigma d t\right).
	\end{align*}
	
	Hence, since $\mathcal{A}_{\Gamma}$ is uniformly  elliptic,  for $\tilde{s}$, $\tilde{\lambda}$ large if needed, we find
	\begin{align}
	& s^{-2} \lambda^{-2} \int_{\Gamma_{T}} e^{2 s \alpha} \xi^{-2}\left|\nabla_{\Gamma} z\right|^{2} d \sigma d t+\mathbf{B}_{2}\nonumber\\
	&=s \lambda \int_{\Gamma_{T}} \xi \varphi_{\Gamma} z_{\Gamma} d \sigma d t-s^{-2} \lambda^{-2} \int_{\Gamma_{T}} e^{2 s \alpha} \xi^{-2} z \partial_{t} z d \sigma d t\nonumber\\
	&\leq C\left(s^{3} \lambda^{4} \int_{\Omega_{T}} e^{-2 s \alpha} \xi^{3}|\varphi|^{2} d x d t+s^{3} \lambda^{3} \int_{\Gamma_{T}} e^{-2 s \alpha} \xi^{3}\left|\varphi_{\Gamma}\right|^{2} d \sigma d t\right).\label{Kh2.36}
	\end{align}
	Now, summing \eqref{Kh2.032}, \eqref{Kh2.032} and \eqref{Kh2.36}, and using the fact that $\mathbf{B}_{1}+\mathbf{B}_{2}=0$,  we obtain the desired estimate.  \qed
\end{proof}

\begin{proof}[Proof of Theorem \ref{ThKh2.7}]
	The proof will be done in three
	steps.\\
	{\bf{ Step 1: Estimate of the two first terms.}}
	Let  $Z=\left(z, z_{\Gamma}\right)$ the weak solution of \eqref{Kh.28}. Following \cite{KhMa19},   we can easily  prove that
	{\small
		\begin{align}
		&s^{3} \lambda^{4} \int_{\Omega_{T}} e^{-2 s \alpha} \xi^{3}|\varphi|^{2} d x d t+s^{3} \lambda^{3} \int_{\Gamma_{T}} e^{-2 s \alpha} \xi^{3}\left|\varphi_{\Gamma}\right|^{2} d \sigma d t\nonumber\\
		&\leq C\left(s^{3} \lambda^{4} \int_{\omega_{T}} e^{-2 s \alpha} \xi^{3}|\varphi|^{2} d x d t+\int_{\Omega_{T}} e^{-2 s \alpha}\left|F_{0}\right|^{2} d x d t+s^{2} \lambda^{2} \int_{\Omega_{T}} e^{-2 s \alpha} \xi^{2}\|F\|_{\mathbb{R}^{N}}^{2} d x d t\right.\nonumber\\
		&\left.+\int_{\Gamma_{T}} e^{-2 s \alpha}\left|F_{0, \Gamma}\right|^{2} d \sigma d t+s^{2} \lambda^{2} \int_{\Gamma_{T}} e^{-2 s \alpha} \xi^{2}\left\|F_{\Gamma}\right\|_{\mathbb{R}^{N}}^{2} d \sigma d t\right).\label{Kh1.40}
		\end{align}}
	{\bf{Step 2. Estimate of the first order terms.}} We use the fact that	
	$\Phi$ is a weak solution to \eqref{1.5} and that $s \lambda^{2} e^{-2 s \alpha} \xi \Phi=\left(s \lambda^{2} e^{-2 s \alpha} \xi \varphi, s \lambda^{2} e^{-2 s \alpha} \xi \varphi_{\Gamma}\right) \in$
	$L^{2}\left(0, T ; \mathbb{H}^{1}\right)$ to obtain from \eqref{Kh1.12} with $V=s \lambda^{2} e^{-2 s \alpha} \xi \Phi,$ that
	{\small
		\begin{align*}
		& s \lambda^{2} \int_{\Omega_{T}} \mathcal{A}\nabla \varphi\cdot \nabla\left(e^{-2 s \alpha} \xi\varphi\right) d x d t+ s \lambda^{2} \int_{\Gamma_{T}} \mathcal{A}_{\Gamma}\nabla_{\Gamma} \varphi_{\Gamma}\cdot \nabla_{\Gamma}\left(e^{-2 s \alpha} \xi \varphi_{\Gamma}\right) d \sigma d t\\
		&+\int_{0}^{T}\left\langle\partial_{t}\left(e^{-2 s \alpha} \xi \varphi\right), \varphi\right\rangle_{\left(H^{1}(\Omega)\right)^{\prime}, H^{1}(\Omega)} d t\\	
		&+ s \lambda^{2} \int_{0}^{T}\left\langle\partial_{t}\left(e^{-2 s \alpha} \xi \varphi_{\Gamma}\right), \varphi_{\Gamma}\right\rangle_{H^{-1}(\Gamma), H^{1}(\Gamma)} dt\\
		&=-\int_{0}^{T}\langle F_{0}-\operatorname{div} F, z(t)\rangle_{\left(H^{1}(\Omega)\right)^{\prime}, H^{1}(\Omega)} d t \\	
		&-\int_{0}^{T}\langle F \cdot\nu+F_{0, \Gamma}-\operatorname{div}_{\Gamma} F_{\Gamma}, z_{\Gamma}\rangle _{H^{-1}(\Gamma), H^{1}(\Gamma)} d t.
		\end{align*}}
	Integrating by parts in time and space, we obtain
	{\small
		\begin{align*}
		&s \lambda^{2} \int_{\Omega_{T}} e^{-2 s \alpha} \xi \mathcal{A}\nabla \varphi \cdot \nabla \varphi d x d t+\frac{s \lambda^{2}}{2} \int_{\Omega_{T}}\left(\partial_{t}\left(e^{-2 s \alpha} \xi\right)-\mathrm{div}(\mathcal{A}\nabla e^{-2 s \alpha} \xi)\right)|\varphi|^{2} d x d t+\mathbf{B}_{3}\\
		&+s \frac{\lambda^{2}}{2} \int_{\Gamma_{T}}\left(\partial_{t}\left(e^{-2 s \alpha} \xi\right)-\mathrm{div}_{\Gamma}(\mathcal{A}_{\Gamma}\nabla_{\Gamma} (e^{-2 s \alpha} \xi))\right)\left|\varphi_{\Gamma}\right|^{2} d \sigma d t\\
		&   +  s \lambda^{2} \int_{\Gamma_{T}} e^{-2 s \alpha} \xi\mathcal{A}\nabla_{\Gamma}\cdot \nabla_{\Gamma} \varphi_{\Gamma}\, d \sigma d t\\
		&=-s \lambda^{2} \int_{\Omega_{T}} e^{-2 s \alpha}\left(\xi F_{0} \varphi-\xi F \cdot \nabla \varphi\right) d x d t+s \lambda^{2} \int_{\Omega_{T}} F \cdot \nabla\left(e^{-2 s \alpha} \xi\right) \varphi d x d t\\ 
		&-s \lambda^{2} \int_{\Gamma_{T}} e^{-2 s \alpha}\left(\xi F_{0, \Gamma} \varphi-\xi F_{\Gamma} \cdot \nabla_{\Gamma} \varphi_{\Gamma}\right) d \sigma d t+s \lambda^{2} \int_{\Gamma_{T}} F_{\Gamma} \cdot \nabla_{\Gamma}\left(e^{-2 s \alpha} \xi\right) \varphi_{\Gamma} d \sigma d t,
		\end{align*}}
	where,  we have set $
	\mathbf{B}_{3}:=-\frac{ s \lambda^{2}}{2} \int_{\Gamma_{T}} \partial^{\mathcal{A}}_{\nu}\left(e^{-2 s \alpha} \xi\right) \varphi_{\Gamma}^{2} d \sigma d t.$
	The same calculations as  in \cite{KhMa19}  leads to
	{\small	{\small
			\begin{align*}
			&s \lambda^{2} \int_{\Omega_{T}} e^{-2 s \alpha} \xi\mathcal{A}\nabla \varphi\cdot\nabla \varphi d x d t+s \lambda^{2} \int_{\Gamma_{T}} e^{-2 s \alpha} \xi\mathcal{A}_{\Gamma}\nabla_{\Gamma} \varphi_{\Gamma}\cdot\nabla_{\Gamma} \varphi_{\Gamma} d \sigma d t\\
			&\leq C\Big(s^{2} \lambda^{2} \int_{\Omega_{T}} \xi^{2}\|F\|_{\mathbb{R}^{N}}^{2} d x d t+s^{3} \lambda^{4} \int_{\Omega_{T}} e^{-2 s \alpha} \xi^{3}|\varphi|^{2} d x d t\\
			& + s^{-1} \int_{\Omega_{T}} e^{-2 s \alpha} \xi^{-1}\left|F_{0}\right|^{2} d x d t\Big) + C\Big(s \lambda^{2} \int_{\Gamma_{T}} \xi\left\|F_{\Gamma}\right\|_{\mathbb{R}^{N}}^{2} d \sigma d t\\
			&+s^{3} \lambda^{4} \int_{\Gamma_{T}} e^{-2 s \alpha} \xi^{3}\left|\varphi_{\Gamma}\right|^{2} d \sigma d t+s^{-1} \int_{\Gamma_{T}} e^{-2 s \alpha} \xi^{-1}\left|F_{0, \Gamma}\right|^{2} d \sigma\Big).
			\end{align*}}}
	We can then conclude  by the fact that   $\mathcal{A}$ and $\mathcal{A}_{\Gamma}$ are uniformly elliptic. This achieves the proof. \qed
\end{proof}
Now, we are in position to deal with the  cost of  approximate controllability issue.
\section{ Cost of interior approximate controllability}\label{subs.1}
As mentioned in the introduction,   the  system (\ref{1.1}) is approximately controllable. In other words, for given $Y_0\in \mathbb{L}^2$, a final state $Y_1\in \mathbb{L}^2$ and $\varepsilon>0$, there exists a control $v\in L^2(\omega_T)$ such that the solution of (\ref{1.1}) satisfies 
\begin{equation}\label{5.1}
\parallel Y(T)- Y_1\parallel_{\mathbb{L}^2}\leq \varepsilon.
\end{equation}
In particular, the set of admissible controls \,$\mathcal{U}_{ad}(Y_0, Y_1, \varepsilon, \omega)$ 
\begin{equation*}\label{0.3}
\mathcal{U}_{ad}(Y_0, Y_1, \varepsilon,\omega)=\big\{ v\in L^2(\omega_T): \text{the\, solution} \,Y \, \text{to} \,\,\eqref{1.1}\,\, \text{satisfies} \,\, \eqref{5.1}\big\}.
\end{equation*}
is nonempty. 
Let us introduce the following quantity, which measures the cost of approximate controllability or, more precisely, the cost of achieving \eqref{5.1}
\begin{equation}\label{5.3}
\mathcal{C}(Y_0, Y_1, \varepsilon, \omega)= \inf_{v\in \mathcal{U}_{ad}(Y_0, Y_1, \varepsilon, \omega)} \lVert v\lVert_{L^2(\omega_T)}.
\end{equation}
The first main aim of this paper is to obtain explicit bounds of  $\mathcal{C}(Y_0, Y_1, \varepsilon, \omega)$.

Taking into account that system \eqref{1.1} is linear, one can assume, without loss
of generality, that $Y_0=0$,  since
\begin{equation*}\label{0.6}
\mathcal {U}_{ad}(Y_0, Y_1, \varepsilon, \omega)= \mathcal {U}_{ad}(0, Z_1, \varepsilon, \omega)
\end{equation*}
for  $Z_1=Y_1-Z(T)$, with $Z$ is the solution of (\ref{1.1}) with $v=0$. 
We can prove, by means of Fenchel-Rockaffelar's duality Theorem \ref{RF.1}, that 
\begin{equation*}
\frac{1}{2}\;\mathcal{C}(0, Y_1, \varepsilon, \omega)^2=\inf_{v\in \mathcal {U}_{ad}(0, Y_1, \varepsilon, \omega)} \frac{1}{2} \lVert v\lVert^2_{L^2(\omega_T)}= - \inf_{\Phi_{T}\in \mathbb{L}^{2}} J^{\varepsilon}_{Y_{1},\omega}(\Phi_T),
\end{equation*}
where
\begin{equation*}\label{5.7}
J^{\varepsilon}_{Y_1,\omega}(\Phi_T)=  \frac{1}{2}\int_{\omega_{T}}|\varphi|^2\, dx dt + \varepsilon\|\Phi_T\|_{\mathbb{L}^2} -\big\langle Y_1,\Phi_T\big\rangle_{\mathbb{L}^2},
\end{equation*}
and $\Phi=(\varphi,\varphi_{\Gamma})$ is  the solution of the adjoint system (\ref{1.5}) with final data $\Phi_T$,   see \cite{GlLi08} for a detailed description to the above technique.
\begin{remark}
	Point out that the admissible set $\mathcal {U}_{ad}(Y_0, Y_1, \varepsilon, \omega)$ is convex and closed, and then the problem \eqref{5.3} admits a unique solution,  but it can be a difficult task to solve and characterize  directly its solution, due to  the nature of  constraints that are involved. This is why  the Fenchel-Rockaffelar duality	idea is often used.
\end{remark}

In the sequel we prove that $J^{\varepsilon}_{Y_{1},\omega}$ achieves its minimum at an element $\widehat{\Phi}_T$,  and by means of the solution of the adjoint system  associated to $\widehat{\Phi}_T$ we deduce the norm optimal control.
\begin{lemma}\label{ll4.1}
	The map $J^{\varepsilon}_{Y_1,\omega}$  is continuous, strictly convex and coercive in $\mathbb{L}^2$, and then there is $\widehat{\Phi}_{T}=(\widehat{\varphi}_T,\widehat{\varphi}_{\Gamma,T})\in\mathbb{L}^{2}$ such that
	\begin{equation}\label{5.8}
	J^{\varepsilon}_{Y_1,\omega}(\widehat{\Phi}_T)= \inf_{\Phi_{T}\in \mathbb{L}^{2}} J^{\varepsilon}_{Y_1,\omega}(\Phi_T).
	\end{equation}
\end{lemma}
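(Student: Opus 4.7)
The plan is to verify the three properties (continuity, strict convexity, coercivity) separately and then invoke the direct method of the calculus of variations to obtain the minimizer. Throughout, I denote by $S\colon \mathbb{L}^2 \to L^2(0,T;\mathbb{H}^1)$ the linear solution map $\Phi_T \mapsto \Phi=(\varphi,\varphi_\Gamma)$ of the adjoint system \eqref{1.5}, which is continuous by Proposition \ref{prop2.3}; in particular the partial map $\Phi_T \mapsto \varphi|_{\omega_T}$ is linear and continuous from $\mathbb{L}^2$ into $L^2(\omega_T)$.

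First I would prove continuity of $J^{\varepsilon}_{Y_1,\omega}$. The term $\varepsilon\|\Phi_T\|_{\mathbb{L}^2}$ is the norm, hence continuous; $\langle Y_1,\Phi_T\rangle_{\mathbb{L}^2}$ is linear and continuous; and the quadratic term $\tfrac{1}{2}\|\varphi\|_{L^2(\omega_T)}^2$ is continuous as the composition of the continuous linear map $\Phi_T\mapsto \varphi|_{\omega_T}$ with the squared norm. Convexity is immediate: $\tfrac12\|\cdot\|_{L^2(\omega_T)}^2$ is convex, so its precomposition with a linear map is convex, $\varepsilon\|\cdot\|_{\mathbb{L}^2}$ is convex, and the linear term is affine. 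For strict convexity the key ingredient is the unique continuation property, which is a direct consequence of the Carleman estimate in Theorem \ref{ThKh2.7} applied with $F_0=F=F_{\Gamma,0}=F_\Gamma=0$: if $\varphi\equiv 0$ on $\omega_T$ then the right-hand side of \eqref{Kh.24} vanishes and hence $\Phi\equiv 0$ on $\Omega_T\cup\Gamma_T$, in particular $\Phi_T=0$. Consequently $\Phi_T\mapsto \varphi|_{\omega_T}$ is injective, so the strictly convex function $v\mapsto\tfrac12\|v\|_{L^2(\omega_T)}^2$ composed with an injective linear map remains strictly convex; adding the convex and affine terms preserves strict convexity.

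The main obstacle is coercivity, which I would establish by a contradiction-free normalization argument combined with unique continuation. Take any sequence $\Phi_T^n$ with $\|\Phi_T^n\|_{\mathbb{L}^2}\to\infty$ and set $\widetilde{\Phi}_T^n := \Phi_T^n/\|\Phi_T^n\|_{\mathbb{L}^2}$. Denoting by $\widetilde{\varphi}^n$ the first component of the corresponding adjoint solution, we have
\begin{equation*}
\frac{J^{\varepsilon}_{Y_1,\omega}(\Phi_T^n)}{\|\Phi_T^n\|_{\mathbb{L}^2}} = \frac{\|\Phi_T^n\|_{\mathbb{L}^2}}{2}\,\|\widetilde{\varphi}^n\|_{L^2(\omega_T)}^2 + \varepsilon - \langle Y_1,\widetilde{\Phi}_T^n\rangle_{\mathbb{L}^2}.
\end{equation*}
Since $\|\widetilde{\Phi}_T^n\|_{\mathbb{L}^2}=1$, extract a subsequence with $\widetilde{\Phi}_T^n\rightharpoonup\widetilde{\Phi}_T$ weakly in $\mathbb{L}^2$; continuity of $S$ yields $\widetilde{\varphi}^n\rightharpoonup\widetilde{\varphi}$ in $L^2(\omega_T)$. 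I then split into two cases. If $\liminf_{n}\|\widetilde{\varphi}^n\|_{L^2(\omega_T)}>0$, the first term forces $J^{\varepsilon}_{Y_1,\omega}(\Phi_T^n)/\|\Phi_T^n\|_{\mathbb{L}^2}\to +\infty$. Otherwise $\widetilde{\varphi}^n\to 0$ strongly in $L^2(\omega_T)$ along a further subsequence, so $\widetilde{\varphi}\equiv 0$ on $\omega_T$; by the unique continuation above, $\widetilde{\Phi}_T=0$, whence $\langle Y_1,\widetilde{\Phi}_T^n\rangle\to 0$ and
\begin{equation*}
\liminf_{n\to\infty}\frac{J^{\varepsilon}_{Y_1,\omega}(\Phi_T^n)}{\|\Phi_T^n\|_{\mathbb{L}^2}} \geq \varepsilon > 0.
\end{equation*}
In both cases $J^{\varepsilon}_{Y_1,\omega}(\Phi_T^n)\to +\infty$, which is coercivity.

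Finally, existence of the minimizer \eqref{5.8} follows from the direct method: $J^{\varepsilon}_{Y_1,\omega}$ is convex and continuous on the Hilbert space $\mathbb{L}^2$, hence weakly sequentially lower semicontinuous; coercivity ensures any minimizing sequence is bounded, so it admits a weakly convergent subsequence whose weak limit $\widehat{\Phi}_T$ achieves the infimum. Strict convexity then guarantees that such a minimizer is unique, although the statement only asserts its existence.
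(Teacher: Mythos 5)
Your proposal is correct and follows essentially the same route as the paper: the same normalization of a sequence $\Phi_T^n$ with $\|\Phi_T^n\|_{\mathbb{L}^2}\to\infty$, the same two-case split on $\liminf_n\|\widetilde{\varphi}^n\|^2_{L^2(\omega_T)}$, the same appeal to unique continuation to force $\widetilde{\Phi}_T=0$ and hence $\liminf J^{\varepsilon}_{Y_1,\omega}(\Phi_T^n)/\|\Phi_T^n\|_{\mathbb{L}^2}\geq\varepsilon$, and existence via lower semicontinuity plus coercivity (the paper cites Brezis, you spell out the direct method). Your explicit justification of strict convexity through the injectivity of $\Phi_T\mapsto\varphi|_{\omega_T}$ is a welcome refinement of the paper's ``easy to see,'' but it is not a different argument.
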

\begin{proof} 
	It is easy to see that $J_{\varepsilon,Y_{1}}$ is strictly convex and, by  the inequality \eqref{est2.10}, $J_{\varepsilon,Y_{1}}$ is continuous in $\mathbb{L}^2$. Hence, by {\cite[Corollary \MakeUppercase{\romannumeral 3}.20, p.58]{C1}}, the existence of a minimum is ensured if $J_{\varepsilon,Y_{1}}$  is coercive,  i.e.,
	\begin{equation}\label{5.88}
	\lim_{\|\Phi_T\|\rightarrow +\infty}J^{\varepsilon}_{Y_1,\omega}(\Phi_T)= +\infty. 
	\end{equation}
	It suffices to prove that
	\begin{equation}\label{5.9}
	\liminf_{\|\Phi_T\|\rightarrow+\infty}\frac{J^{\varepsilon}_{Y_1,\omega}(\Phi_T)}{\|\Phi_T\|}\geq \varepsilon, 
	\end{equation}
	since, obviously (\ref{5.9}) implies  (\ref{5.88}).
	To  prove \eqref{5.9},  let $(\Phi^n_T)= (\varphi_T^n, \varphi^n_{T,\Gamma})\subset\mathbb{L}^2$ be a sequence of initial data for the adjoint system with $\|\Phi^n_T\|_{\mathbb{L}^2}\longrightarrow +\infty$, and 
	set $ \widetilde{\Phi}^n_T = \frac{\Phi^n_T}{\|\Phi^n_T\|_{\mathbb{L}^2}}.$
	On the other hand, let $\widetilde{\Phi}^n=(\widetilde{\varphi}^n, \widetilde{\varphi}_{\Gamma}^n)$ the solution of the adjoint problem with final data $ \widetilde{\Phi}^n_T$. Then,
	$$ \frac{J^{\varepsilon}_{Y_1,\omega}(\Phi^n_T)}{\|\Phi^n_T\|_{\mathbb{L}^2}} = \frac{1}{2}\|\Phi^n_T\|_{\mathbb{L}^2} \int_{\omega_T}|\widetilde{\varphi}^n|^2\,dx\,dt  + \varepsilon \, - \, \big\langle Y_1,\widetilde{\Phi}^n_T\big\rangle_{\mathbb{L}^2}.$$
	We  discuss the  following two cases :
	If  $\displaystyle\liminf_{n\longrightarrow +\infty}\int_{\omega_T}|\widetilde{\varphi}^n|^2\,dx\,dt>0$, we obtain 
	$$\frac{J^{\varepsilon}_{Y_1,\omega}(\Phi^n_T)}{\|\Phi^n_T\|_{\mathbb{L}^2}} \longrightarrow +\infty, \quad \text{as}\,\, n\longrightarrow +\infty .$$ 
	If  now $\displaystyle\liminf_{n\longrightarrow +\infty}\int_{\omega_T}|\widetilde{\varphi}^n|^2\,dx\,dt=0,$ since the sequence $(\widetilde{\Phi}^n_T)$ is bounded in $\mathbb{L}^2,$ by extracting a subsequence (still
	denoted by the index $n$), we deduce that $(\widetilde{\varphi}^n_T,\widetilde{\varphi}^n_{\Gamma,T})=\widetilde{\Phi}^n_T\rightharpoonup \Psi_T:=(\psi_T,\psi_{\Gamma,T})$ weakly in $\mathbb{L}^2$ and, by using the linearity and the continuity of solution operator, we get $(\widetilde{\varphi}^n,\widetilde{\varphi}^n_{\Gamma})=\widetilde{\Phi}^n\rightharpoonup \Psi=(\psi,\psi_{\Gamma})$ weakly in $L^2( 0,T;\mathbb{L}^2)$,  where $\Psi$ is the solution of the adjoint system \eqref{1.5} with final data $\Psi_T.$ In particular,  $\widetilde{\varphi}^n\rightharpoonup \psi$ weakly in $L^2( \omega_T), $ and 
	$$\int_{\omega_T}\psi^2\,dx dt \leq \liminf_{n\longrightarrow +\infty}\int_{\omega_T}|\widetilde{\varphi}^n|^2\,dx\,dt=0,$$
	and therefore  $\psi = 0$ on $\omega_T.$ Then, by the unique continuation property \eqref{01.3}, we get  $\psi = 0$ on $\Omega_T$ and $\psi_{\Gamma} = 0$ on $\Gamma_T.$ We can then deduce that $\Psi_T =0$, and, in particular,  $\big\langle Y_1,\widetilde{\Phi}^n_T\big\rangle_{\mathbb{L}^2}\longrightarrow 0.$
	Hence
	$$   \liminf_{n\rightarrow\infty}  \frac{J^{\varepsilon}_{Y_1,\omega}(\Phi^n_T)}{\|\Phi^n_T\|_{\mathbb{L}^2}}\geq \liminf_{n\rightarrow\infty}\big(\varepsilon- \big\langle Y_1,\widetilde{\Phi}^n_T\big\rangle_{\mathbb{L}^2}\big)=\varepsilon.$$
	This concludes the proof. \qed
\end{proof}

The result  \eqref{5.8} gives rise to the solution of the problem (\ref{5.3}), more precisely we have the following results.
\begin{theorem}\label{lem2.5}
	(i)\; 	Let  $\widehat{\Phi}_T$ be the minimum of $J^{\varepsilon}_{Y_{1}, \omega}$ in $\mathbb{L}^2$, and  $\widehat{\Phi}=(   \widehat{\varphi}, \widehat{\varphi}_\Gamma )$ the solution of \eqref{1.5} with final data $\widehat{\Phi}_T$.   Then,  $\widehat{v}=1_\omega\widehat{ \varphi}$ is a control of \eqref{1.1},  i.e.,
	\begin{equation*}
	\|Y(T,\widehat{v} )-Y_1\|_{\mathbb{L}^2}\leq \varepsilon.
	\end{equation*}
	(ii)\; The control $\widehat{v}$  is an optimal one, that is to say
	$$\|\widehat{\varphi}\|_{L^2(\omega_T)} =\inf_{v\in \mathcal{U}_{\text{ad}}(0,Y_1,\varepsilon,\omega)}\|v\|_{L^2(\omega_T)}.$$ 
\end{theorem}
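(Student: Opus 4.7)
The plan is to exploit the first-order optimality condition for the functional $J^{\varepsilon}_{Y_1,\omega}$ at the minimizer $\widehat{\Phi}_T$, combined with the integration-by-parts duality identity between the state equation \eqref{1.1} and the adjoint equation \eqref{1.5}.

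\textbf{Step 1: Euler--Lagrange inequality.} Since $\widehat{\Phi}_T$ minimizes $J^{\varepsilon}_{Y_1,\omega}$ and the quadratic and linear parts are smooth while $\Phi_T\mapsto \varepsilon\|\Phi_T\|_{\mathbb{L}^2}$ is convex with directional derivative bounded by $\varepsilon\|\Phi_T\|_{\mathbb{L}^2}$, passing to the limit $h\to 0^+$ in $h^{-1}(J^{\varepsilon}_{Y_1,\omega}(\widehat{\Phi}_T+h\Phi_T)-J^{\varepsilon}_{Y_1,\omega}(\widehat{\Phi}_T))\geq 0$ yields
\begin{equation*}
\int_{\omega_T}\widehat{\varphi}\varphi\, dx\,dt \;+\;\varepsilon\,d(\widehat{\Phi}_T;\Phi_T)\;-\;\langle Y_1,\Phi_T\rangle_{\mathbb{L}^2}\;\geq\;0\qquad\forall\,\Phi_T\in\mathbb{L}^2,
\end{equation*}
where $d(\widehat{\Phi}_T;\Phi_T)=\langle \widehat{\Phi}_T,\Phi_T\rangle_{\mathbb{L}^2}/\|\widehat{\Phi}_T\|_{\mathbb{L}^2}$ if $\widehat{\Phi}_T\neq 0$, and $d(0;\Phi_T)=\|\Phi_T\|_{\mathbb{L}^2}$ otherwise.

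\textbf{Step 2: Duality identity.} Multiplying the first equation of \eqref{1.1} (with $v=\widehat{v}=1_\omega \widehat{\varphi}$ and $Y_0=0$) by $\varphi$, the boundary equation by $\varphi_\Gamma$, integrating over $\Omega_T$ and $\Gamma_T$ respectively, and integrating by parts in space and time, the interior conormal derivatives cancel against the boundary term $\partial^{\mathcal{A}}_\nu\varphi$ in \eqref{1.5}, giving the duality relation
\begin{equation*}
\int_{\omega_T}\widehat{\varphi}\,\varphi\, dx\,dt\;=\;\langle Y(T,\widehat{v}),\Phi_T\rangle_{\mathbb{L}^2}.
\end{equation*}
Substituting this into the inequality from Step 1 and replacing $\Phi_T$ by $-\Phi_T$, we obtain
\begin{equation*}
\bigl|\langle Y(T,\widehat{v})-Y_1,\Phi_T\rangle_{\mathbb{L}^2}\bigr|\;\leq\;\varepsilon\,\|\Phi_T\|_{\mathbb{L}^2}\qquad\forall\,\Phi_T\in\mathbb{L}^2,
\end{equation*}
which, by duality in $\mathbb{L}^2$, implies $\|Y(T,\widehat{v})-Y_1\|_{\mathbb{L}^2}\leq \varepsilon$. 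This proves (i) and shows that $\widehat{v}\in \mathcal{U}_{ad}(0,Y_1,\varepsilon,\omega)$.

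\textbf{Step 3: Optimality.} For any admissible $v\in\mathcal{U}_{ad}(0,Y_1,\varepsilon,\omega)$, the same duality identity applied to the pair ($v$, $\widehat{\Phi}$) gives $\int_{\omega_T} v\,\widehat{\varphi}\,dx\,dt=\langle Y(T,v),\widehat{\Phi}_T\rangle_{\mathbb{L}^2}$. Meanwhile, choosing $\Phi_T=\widehat{\Phi}_T$ in the Euler--Lagrange relation of Step 1 (which becomes an equality in that direction) yields $\langle Y_1,\widehat{\Phi}_T\rangle_{\mathbb{L}^2}=\|\widehat{\varphi}\|^2_{L^2(\omega_T)}+\varepsilon\|\widehat{\Phi}_T\|_{\mathbb{L}^2}$ when $\widehat{\Phi}_T\neq 0$ (the case $\widehat{\Phi}_T=0$ trivially gives $\widehat{v}=0$ as the unique optimum). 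Combining these two displays,
\begin{equation*}
\|\widehat{\varphi}\|^2_{L^2(\omega_T)}\;=\;\int_{\omega_T}v\,\widehat{\varphi}\,dx\,dt\;+\;\langle Y_1-Y(T,v),\widehat{\Phi}_T\rangle_{\mathbb{L}^2}\;-\;\varepsilon\|\widehat{\Phi}_T\|_{\mathbb{L}^2},
\end{equation*}
and applying Cauchy--Schwarz together with $\|Y(T,v)-Y_1\|_{\mathbb{L}^2}\leq\varepsilon$ yields $\|\widehat{\varphi}\|^2_{L^2(\omega_T)}\leq \|v\|_{L^2(\omega_T)}\|\widehat{\varphi}\|_{L^2(\omega_T)}$, i.e., $\|\widehat{v}\|_{L^2(\omega_T)}\leq \|v\|_{L^2(\omega_T)}$, proving (ii).

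The only delicate step is the rigorous justification of the duality identity in Step 2, since the adjoint state $\Phi$ lives only in $\mathbb{W}\hookrightarrow L^2(0,T;\mathbb{H}^1)$ with $\Phi'\in L^2(0,T;\mathbb{H}^{-1})$; one must therefore interpret the boundary and time integrations by parts through the variational formulation \eqref{Kh1.12} rather than pointwise, and verify that the trace terms on $\Gamma_T$ involving $\partial^{\mathcal{A}}_\nu\varphi$ produced by integrating the bulk equation cancel exactly those arising from the dynamic boundary equation. This is standard but requires care given the dynamic boundary conditions.
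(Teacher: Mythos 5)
Your proof is correct. Part (i) follows the paper's own argument almost verbatim: the variational inequality at the minimizer (you use the exact one-sided directional derivative of the norm where the paper uses the cruder bound $\bigl|\,\|\widehat{\Phi}_T+h\Psi_T\|-\|\widehat{\Phi}_T\|\,\bigr|\leq |h|\|\Psi_T\|$, but both yield the same conclusion $|\langle Y(T,\widehat v)-Y_1,\Phi_T\rangle_{\mathbb{L}^2}|\leq\varepsilon\|\Phi_T\|_{\mathbb{L}^2}$), combined with the state--adjoint duality identity. For part (ii) you take a genuinely different and more elementary route. The paper appeals to the Fenchel--Rockafellar duality theorem to reduce optimality to the identity $\|\widehat{\varphi}\|^2_{L^2(\omega_T)}+\varepsilon\|\widehat{\Phi}_T\|_{\mathbb{L}^2}-\langle Y_1,\widehat{\Phi}_T\rangle_{\mathbb{L}^2}=0$, and then proves one inequality by admissibility of $\widehat v$ and the other via the Duvaut--Lions subdifferential characterization (Lemma \ref{l5.4}) with the test point $v=0$. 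You instead obtain the same identity directly by testing the Euler--Lagrange inequality in the two directions $\pm\widehat{\Phi}_T$, along which the norm term is differentiable, and then conclude by a bare-hands comparison: for any admissible $v$, the duality pairing plus Cauchy--Schwarz and the constraint $\|Y(T,v)-Y_1\|_{\mathbb{L}^2}\leq\varepsilon$ give $\|\widehat{\varphi}\|^2_{L^2(\omega_T)}\leq\|v\|_{L^2(\omega_T)}\|\widehat{\varphi}\|_{L^2(\omega_T)}$. This buys self-containedness: you avoid invoking Theorem \ref{RF.1} (whose qualification hypothesis $0\in\mathrm{int}(L(D(F)-D(G)))$ the paper never verifies) and the external convex-analysis lemma, at the cost of no extra length. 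Your handling of the degenerate case $\widehat{\Phi}_T=0$ and your closing caveat about justifying the integration by parts within the weak formulation \eqref{Kh1.12} are both appropriate; the paper glosses over the latter as well.
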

\begin{proof}
	(i)\; 	Suppose that $J^{\varepsilon}_{Y_{1}, \omega}$ attains its minimum value at $\widehat{\Phi}_T\in \mathbb{L}^2$. Then,  for any $\Psi_T\in \mathbb{L}^2$ and $h\in \mathbb{R}$, we have
	$$  J^{\varepsilon}_{Y_1,\omega}(\widehat{\Phi}_T ) \leq  J^{\varepsilon}_{Y_1,\omega}(\widehat{\Phi}_T + h\Psi_T).$$   
	By definition of  $J^{\varepsilon}_{Y_1,\omega}$ and  the linearity of solution operator, we have 
	{\small
		\begin{align*} 
		J^{\varepsilon}_{Y_1,\omega}(\widehat{\Phi}_T + h\Psi_T)& = \frac{1}{2}\|\widehat{\varphi} + h {\psi}\|^2 _{L^2(\omega_T)}  + \varepsilon\|  \widehat{\Phi}_T + h {\Psi_T}\|_{\mathbb{L}^2}  \, -\, \big\langle Y_1,\widehat{\Phi}_T +h \Psi_T \big\rangle_{\mathbb{L}^2} \\
		&= \frac{1}{2}\| \widehat{\varphi}\|^2_{L^2(\omega_T)}  + \frac{h^2}{2}  \| \psi\|^2_{L^2(\omega_T)}  + h\langle \widehat{\varphi}, \psi \rangle_{L^2(\omega_T)}    + \varepsilon\|  \widehat{\Phi}_T + h {\Psi}\|_{\mathbb{L}^2} \\
		&\quad - \big\langle Y_1,\widehat{\Phi}_T +h \Psi_T \big\rangle_{\mathbb{L}^2}.
		\end{align*}}
	Here $\Psi=(\psi, \psi_\Gamma)$ is, as usual,  the solution of adjoint system \eqref{1.5} with final data $\Psi_T.$
	Thus              
	\begin{equation*}
	0\leq \big[\varepsilon\|  \widehat{\Phi}_T + h {\Psi_T}\|_{\mathbb{L}^2}- \|  \widehat{\Phi}_T \|_{\mathbb{L}^2}\big] + \frac{h^2}{2}  \| \psi\|^2_{L^2(\omega_T)} + h\big(\langle \widehat{\varphi}, \psi \rangle_{L^2(\omega_T)} \,-\,\big\langle Y_1,\Psi_T\big\rangle_{\mathbb{L}^2}\big).
	\end{equation*}
	Since $ \|  \widehat{\Phi}_T + h {\Psi_T}\|_{\mathbb{L}^2}-  \|  \widehat{\Phi}_T \|_{\mathbb{L}^2} \leq  |h|\|  \Psi_T \|_{\mathbb{L}^2}$, we obtain, 	for all  $h\in\mathbb{R}$ and  $\Psi_T\in \mathbb{L}^2$, 
	\begin{equation*}
	0\leq \varepsilon |h|\| {\Psi_T}\|_{\mathbb{L}^2}  + \frac{h^2}{2}  \| \psi\|^2_{L^2(\omega_T)} + h\big(\langle \widehat{\varphi}, \psi \rangle_{L^2(\omega_T)} \, -\,\big\langle Y_1,\Psi_T\big\rangle_{\mathbb{L}^2}\big).
	\end{equation*}
	Dividing by $h > 0$ and  passing to the limit $h \longrightarrow 0$,  we obtain
	\begin{equation*}
	0\leq \varepsilon \| {\Psi_T}\|_{\mathbb{L}^2}   + \big(\langle \widehat{\varphi}, \psi \rangle_{L^2(\omega_T)} \, -\,\big\langle Y_1,\Psi_T\big\rangle_{\mathbb{L}^2}\big).
	\end{equation*} 
	The same calculations with $h < 0$ gives,  for all $\Psi_T\in \mathbb{L}^2$,        
	\begin{equation}\label{5.13}
	\big|\langle \widehat{\varphi}, \psi \rangle_{L^2(\omega_T)} \, -\,\big\langle Y_1,\Psi_T\big\rangle_{\mathbb{L}^2}\big|\leq \varepsilon \| {\Psi_T}\|_{\mathbb{L}^2}.
	\end{equation}
	Taking the control $v= \widehat{\varphi}$ in (\ref{1.1}), and multiplying the adjoint system (\ref{1.5}) by $\Psi$ and integrating by parts,  we deduce 
	$ \langle \widehat{\varphi}, \psi \rangle_{L^2(\omega_T)}= \big\langle Y(T),\Psi_T\big\rangle_{\mathbb{L}^2} .$
	This and  (\ref{5.13}) yield our aim
	$$   \|Y(T)-Y_1\|_{\mathbb{L}^2}\leq \varepsilon.$$
	For (ii), observe, thanks to Fenchel-Rockafellar duality Theorem \ref{RF.1} and the definition of $J^{\varepsilon}_{Y_1,\omega}$, that the minimization problem 
	\begin{equation*}      
	\|\widehat{\varphi}\| =\inf_{v\in \mathcal{U}_{\text{ad}}(0,Y_1,\varepsilon, \omega)}\|v\|_{L^2(\omega_T)} 
	\end{equation*} 
	is equivalent to the following equality             
	\begin{equation*}      
	\|\widehat{\varphi}\|^2_{L^2(\omega_T)} +\varepsilon\|\widehat{\Phi}_T\|_{\mathbb{L}^2} - \big\langle Y_1,\widehat{\Phi}_T\big\rangle_{\mathbb{L}^2}=0. 
	\end{equation*}
	Since $\widehat{v}:=\widehat{\varphi}\in \mathcal{U}_{\text{ad}}(0,Y_1,\varepsilon,\omega),$  we deduce easily that  
	$$\|\widehat{\varphi}\|^2_{L^2(\omega_T)} +\varepsilon\|\widehat{\Phi}_T\|_{\mathbb{L}^2} - \big\langle Y_1,\widehat{\Phi}_T\big\rangle_{\mathbb{L}^2}\geq 0.$$
	To finish the proof, it remains to show the other inequality, to do this end, we make use of the following lemma, see {\cite{B2}} for the proof.
	\begin{lemma}[\cite{B2}]\label{l5.4}
		Let $H$ a Hilbert space and $K\subset H$ a convex closed. Let $J_1, J_2: K\longrightarrow \mathbb{R}$ be two continuous convex functions. We assume that $J_1$ is differentiable on int(K). Then  the following two conditions are equivalent
		\begin{itemize}
			\item[(a)]  $u\in K,\quad J_1(u) +J_2(u)= \inf\limits_{ v\in K} \big[J_1(v) +J_2(v)\big]$,
			\item[(b)]  $u\in K,\quad J^{'}_1(u)\cdot(v-u) + J_2(v) - J_2(u) \geq 0,  \quad \forall v \in K.$
		\end{itemize}          
	\end{lemma} 
	\vspace{0.3cm}
	\par Let us apply  Lemma \ref{l5.4} for
	$$J_1(\Phi_T)= \frac{1}{2}\|\varphi\|^2_{L^2(\omega_T)} - \big\langle Y_1,\Phi_T \big\rangle_{\mathbb{L}^2}\quad \text{and}\quad
	J_2(\Phi_T)= \varepsilon\|\Phi_T\|_{\mathbb{L}^2},\quad\Phi_T\in {\mathbb{L}^2} .$$ 
	We have \,$ J^{\varepsilon}_{Y_1,\omega}= J_1+J_2$, and 
	\begin{equation} \label{5.15}
	\small J^{\varepsilon}_{Y_1, \omega}(\widehat{\Phi}_T) =\min_{\Phi_T\in \mathbb{L}^2(\Omega)} J^{\varepsilon}_{Y_1, \omega}(\Phi_T)  \Longleftrightarrow J^{'}_1(\widehat{\Phi}_T)\cdot (v-\widehat{\Phi}_T) + J_2(v) - J_2(\widehat{\Phi}_T) \geq 0,\, \,  \forall\, v \in \mathbb{L}^2.                                 
	\end{equation}
	It is  easy to see that
	\begin{equation} \label{5.16}
	J_1(\Phi_T)=\frac{1}{2}\|\Pi_1\circ S(\Phi_T)\|^2_{L^2(\omega_T)} -\big\langle Y_1, \Phi_T\big\rangle_{\mathbb{L}^2},
	\end{equation}
	where $S$ is the solution operator  of the adjoint system \eqref{1.5} i.e., $S(\Phi_T)=\Phi$ and $\Pi_1$ is the first projection from $L^2(\omega_T)\times L^2(\Gamma_T)$ into $L^2(\omega_T)$, that is, 
	$\Pi_1(u,v)=u$ for all $(u,v)\in L^2(\omega_T)\times L^2(\Gamma_T)$.
	Hence,  from (\ref{5.16}),  for all $K_0\in \mathbb{L}^2$, one has
	{\small
		\begin{equation}\label{5.17}
		J^{'}_1(\widehat{\Phi}_T)\cdot(K_0)  = \big\langle \Pi_1(S(\Phi_T)) , \Pi_1(S(K_0))\big\rangle_{\mathbb{L}^2} - \,\, \big\langle Y_1,K_0 \big\rangle_{\mathbb{L}^2} = \langle \widehat{\varphi},k \rangle_{L^2(\omega_T)}-\,\,\big\langle Y_1,K_0 \big\rangle_{\mathbb{L}^2},
		\end{equation}}
	with $\widehat{\Phi}=(\widehat{\varphi},\widehat{\varphi}_\Gamma)$ is  (respectively $K=(k,k_\Gamma)$) the solution of the adjoint system (\ref{1.5}) with final  data $\widehat{\Phi}_T$ (respectively $K_0$).
	Taking $v=0$ in the right hand side of (\ref{5.15}),  it follows
	\begin{equation*}
	J^{'}_1(\widehat{\Phi}_T)\cdot(-\widehat{\Phi}_T)  +J_2(\widehat{\Phi}_T) \geq 0,                      
	\end{equation*}
	and using (\ref{5.17}), we have
	\begin{equation*}
	-\|\widehat{\varphi}\|^2_{L^2(\omega_T)}  + \big\langle Y_1,\widehat{\Phi}_T \big\rangle_{\mathbb{L}^2} -  \varepsilon\|\widehat{\Phi}_T\|_{\mathbb{L}^2} \geq 0,
	\end{equation*}
	which finishes the proof.       \qed      
\end{proof}
\begin{remark}\label{R5.5}
	From  Theorem \eqref{lem2.5}, we have $$\inf_{\Phi_T\in \mathbb{L}^2}J^{\varepsilon}_{Y_1,\omega }(\Phi_T)=-\frac{1}{2}\int_{\omega_T}|\widehat{\varphi}|^2\,dx dt.$$
\end{remark}

Now, we state our main result in this paper, concerning  the cost of interior approximate controllability. In what follows, $C$ stands for a generic positive constant only  depending on $\Omega$  and $\omega$, whose value can change from line to line.

\begin{theorem}\label{t5.6}
	For any target $Y_1  =(y_1, y_{\Gamma,1})\in\mathbb{H}^2$, $T>0$, $\varepsilon>0$, $a\in L^{\infty}(\Omega_T)$, $b\in L^{\infty}(\Gamma_T)$, $B\in L^{\infty}(\Omega_T)^N$ and $B_{\Gamma}\in L^{\infty}(\Gamma_T)^N$ one has
	\begin{equation*}
	\mathcal{C}(0, Y_1,\varepsilon, \omega)\leq \exp\Big(C\big(N(T,a,b,B,B_{\Gamma}) +  \frac{1}{\epsilon} M(a,b,B,B_{\Gamma}, Y_1)  \Big)\|Y_1\|_{\mathbb{L}^2},
	\end{equation*} 
	where 
	\begin{align*}
	N(T, a,b,B,B_{\Gamma})&= 1 +\frac{1}{T}+ T(\|a\|_{\infty} + \|b\|_{\infty} +\|B\|^2_{\infty} +\|B_{\Gamma}\|^2_{\infty})\nonumber\\
	&+ \|a\|^{\frac{2}{3}}_{\infty} + \|b\|^{\frac{2}{3}}_{\infty} + \|B\|^2_{\infty} +\|B_{\Gamma}\|^2_{\infty}
	\end{align*}
	and 
	\begin{align*}
	&M(a,b,B,B_{\Gamma}, Y_1)=  \|a\|_{\infty} + \|b\|_{\infty} +\|B\|^2_{\infty} +\|B_{\Gamma}\|^2_{\infty}+ \nonumber \\
	&(\|a\|_{\infty} + \|b\|_{\infty})\|Y_1\|_{\mathbb{L}^2}+ (\|B\|_{\infty} + \|B_{\Gamma}\|_{\infty})\|Y_1\|_{\mathbb{H}^1} +\|Y_1\|_{\mathbb{H}^2}.
	\end{align*}

\end{theorem}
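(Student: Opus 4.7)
The starting point is the characterization from Theorem \ref{lem2.5} and Remark \ref{R5.5}: the cost $\mathcal{C}(0,Y_1,\varepsilon,\omega)$ equals $\|\widehat{\varphi}\|_{L^2(\omega_T)}$, where $\widehat{\Phi}=(\widehat{\varphi},\widehat{\varphi}_\Gamma)$ is the adjoint trajectory associated with the minimizer $\widehat{\Phi}_T$ of $J^{\varepsilon}_{Y_1,\omega}$, and the Euler equation for the minimum reads
\[
\|\widehat{\varphi}\|^2_{L^2(\omega_T)} + \varepsilon \|\widehat{\Phi}_T\|_{\mathbb{L}^2} = \langle Y_1,\widehat{\Phi}_T\rangle_{\mathbb{L}^2}.
\]
The strategy is to control the right-hand side of this identity in two different ways—one producing $\|Y_1\|_{\mathbb{L}^2}$ as a prefactor through an observability estimate, the other producing $1/\varepsilon$ and higher norms of $Y_1$ in the exponent through an $\mathbb{H}^2$-lifting of the pairing—and to conclude by combining them.

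First, I would apply Theorem \ref{ThKh2.7} to $\widehat{\Phi}$, reading off the potential contributions as source data $F_0=-a\widehat{\varphi}$, $F=-\widehat{\varphi}B$, $F_{\Gamma,0}=-b\widehat{\varphi}_\Gamma$, $F_\Gamma=-\widehat{\varphi}_\Gamma B_\Gamma$. The weighted potential terms on the right of \eqref{Kh.24}, namely $\|a\|_\infty^2\int \xi^2 e^{-2s\alpha}|\widehat{\varphi}|^2$, $s\lambda^2\|B\|_\infty^2\int\xi^2 e^{-2s\alpha}|\widehat{\varphi}|^2$ and their boundary analogues, are absorbed into the Carleman LHS $s^3\lambda^4\int\xi^3 e^{-2s\alpha}|\widehat{\varphi}|^2$ by choosing
\[
\lambda\geq C\bigl(1+\|a\|_\infty^{2/3}+\|b\|_\infty^{2/3}+\|B\|_\infty^2+\|B_\Gamma\|_\infty^2\bigr),
\]
\[
s\geq C(T+T^2)\bigl(1+\|a\|_\infty+\|b\|_\infty+\|B\|_\infty^2+\|B_\Gamma\|_\infty^2\bigr).
\]
These thresholds produce precisely the terms in $N(T,a,b,B,B_\Gamma)$. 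On $[T/4,3T/4]$ the weights $\xi^3 e^{-2s\alpha}$ are bounded below by a positive constant times $e^{-Cs/T^2}$, so the Carleman LHS controls $\int_{T/4}^{3T/4}\|\widehat{\Phi}(t)\|^2_{\mathbb{L}^2}\,dt$. Picking $t_0\in[T/4,3T/4]$ by a mean-value argument and combining with the forward energy estimate in $\tau=T-t$ for the dissipative system satisfied by $\widehat{\Phi}$ yields the observability
\[
\|\widehat{\Phi}_T\|^2_{\mathbb{L}^2}\leq \exp\bigl(CN(T,a,b,B,B_\Gamma)\bigr)\|\widehat{\varphi}\|^2_{L^2(\omega_T)}.
\]

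To bound the pairing $\langle Y_1,\widehat{\Phi}_T\rangle_{\mathbb{L}^2}$ with the $\mathbb{H}^2$-regularity of $Y_1$, I would introduce the auxiliary forward trajectory $\widetilde{Y}(t)=\theta(t)Y_1$ with $\theta\in C^1([0,T])$, $\theta(0)=0$, $\theta(T)=1$. Then $\widetilde{Y}$ solves the forward system \eqref{sys2.2} with source $F$ given on $\Omega$ by $\theta'(t)y_1+\theta(t)\bigl[-\dv(\mathcal{A}\nabla y_1)+B\cdot\nabla y_1+a\,y_1\bigr]$ and an analogous expression on $\Gamma$ involving $\dv_\Gamma(\mathcal{A}_\Gamma\nabla_\Gamma y_{\Gamma,1})$, $\partial_\nu^{\mathcal{A}}y_1$, $B_\Gamma\cdot\nabla_\Gamma y_{\Gamma,1}$ and $b\,y_{\Gamma,1}$. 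The forward/backward duality between $\widetilde{Y}$ and $\widehat{\Phi}$ integrates cleanly (the $\partial_\nu^{\mathcal{A}}$ cross-terms at $\Gamma$ cancel because of the dynamic boundary coupling, exactly as in the computation behind Proposition \ref{prop2.3}), giving
\[
\langle Y_1,\widehat{\Phi}_T\rangle_{\mathbb{L}^2}=\int_0^T\langle F,\widehat{\Phi}\rangle_{\mathbb{L}^2}\,dt,
\]
and Cauchy--Schwarz together with the explicit formula for $F$ bounds this by $C\,M(a,b,B,B_\Gamma,Y_1)\,\|\widehat{\Phi}\|_{L^2(0,T;\mathbb{L}^2)}$, the structure of $M$ matching the various $\|Y_1\|_{\mathbb{H}^k}$ norms as required.

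The main obstacle is combining these ingredients into the precise form $\exp(C(N+M/\varepsilon))\|Y_1\|_{\mathbb{L}^2}$. My plan is to split the pairing as $\langle Y_1,\widehat{\Phi}_T\rangle\leq \|Y_1\|_{\mathbb{L}^2}\|\widehat{\Phi}_T\|_{\mathbb{L}^2}$ (which feeds directly into observability and yields the $\exp(CN)\|Y_1\|_{\mathbb{L}^2}\mathcal{C}$ contribution) and, in parallel, to exploit the optimality inequality $\varepsilon\|\widehat{\Phi}_T\|_{\mathbb{L}^2}\leq\langle Y_1,\widehat{\Phi}_T\rangle_{\mathbb{L}^2}$ together with the $\mathbb{H}^2$-lifting estimate to obtain an independent control of $\|\widehat{\Phi}_T\|_{\mathbb{L}^2}$ that produces the additional factor $\exp(CM/\varepsilon)$. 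The delicate step is to orchestrate the bootstrap so that only $\|Y_1\|_{\mathbb{L}^2}$ appears as the explicit prefactor while the higher-order dependence on $Y_1$ remains confined to the exponent alongside the $1/\varepsilon$ factor; this is precisely the form required for the fixed-point iteration of Section \ref{sss.3}.
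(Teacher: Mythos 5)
Your proposal founders on its central claimed estimate. You assert that a mean-value argument on $[T/4,3T/4]$ plus ``the forward energy estimate in $\tau=T-t$'' yields
\begin{equation*}
\|\widehat{\Phi}_T\|^2_{\mathbb{L}^2}\leq \exp\bigl(CN(T,a,b,B,B_{\Gamma})\bigr)\,\|\widehat{\varphi}\|^2_{L^2(\omega_T)} .
\end{equation*}
This inequality is false for parabolic problems: the backward adjoint system is well-posed (and smoothing) in the direction of decreasing $t$, so the energy estimate propagates a bound from $\Phi(t_0)$ to $\Phi(t)$ only for $t\leq t_0$. That is exactly why Theorem \ref{t5.7} observes $\Phi(0)$, not $\Phi_T$. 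Taking $\Phi_T$ along a sequence of high-frequency eigenfunctions of $A$ shows that $\|\Phi(t_0)\|/\|\Phi_T\|\to 0$ for any fixed $t_0<T$, so no constant of the form $\exp(CN)$ can work; if such an inequality held, the system would be exactly controllable in $\mathbb{L}^2$ and the cost would be bounded independently of $\varepsilon$, contradicting the $\varepsilon^{-1}$ in the statement. The same objection defeats your second route: the bound $\langle Y_1,\widehat{\Phi}_T\rangle\leq CM\,\|\widehat{\Phi}\|_{L^2(0,T;\mathbb{L}^2)}$ is of no use because $\int_{3T/4}^{T}\|\widehat{\Phi}(t)\|^2_{\mathbb{L}^2}\,dt$ is likewise not controlled by the observation (the Carleman weight degenerates at $t=T$ and the energy estimate again runs the wrong way). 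Consequently the final ``orchestration'' step, which is precisely where $1/\varepsilon$ must enter the exponent, has no mechanism to rest on, and you explicitly leave it unspecified.

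The paper's proof supplies exactly the missing device: a time shift $\delta>0$. One writes $J^{\varepsilon}_{Y_1,\omega}=J^{\delta}_{Y_1,\omega}+\varepsilon\|\Phi_T\|_{\mathbb{L}^2}-\langle Y_1,\Phi_T-\Phi(T-\delta)\rangle_{\mathbb{L}^2}$, pairs $Y_1$ with the \emph{observable} quantity $\Phi(T-\delta)$ (whose observability constant from $\omega_T$ carries $e^{C/\delta}$ in place of $e^{C/T}$), and controls the commutation error via the splitting $\Phi(T-\delta)=e^{\delta A}\Phi_T-\int_0^{\delta}e^{(\delta-s)A}F_T(s)\,ds$, using $\|\Phi_T-e^{\delta A}\Phi_T\|_{\mathbb{H}^{-2}}\leq C\delta\|\Phi_T\|_{\mathbb{L}^2}$ together with the potential terms measured against $\|Y_1\|_{\mathbb{L}^2}$ and $\|Y_1\|_{\mathbb{H}^1}$. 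Requiring this error to be at most $\varepsilon\|\Phi_T\|_{\mathbb{L}^2}$ forces $\delta\sim\varepsilon/(\|Y_1\|_{\mathbb{H}^2}+\cdots)$, and substituting into $e^{C/\delta}$ produces the factor $\exp(CM/\varepsilon)$ while leaving $\|Y_1\|_{\mathbb{L}^2}$ as the prefactor. Your $\mathbb{H}^2$-lifting and your absorption of the potentials into the Carleman parameters are sound ingredients and parallel the paper's Theorem \ref{t5.7}, but without the $\delta$-shift (or an equivalent regularization of the pairing near $t=T$) the argument cannot close.
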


In order to prove  Theorem \ref{t5.6}, we mainly  need   the following observability  inequality with explicit constants.  Using  some ideas as in  the proof of  {\cite[Theorem 1.1]{B4}}, we deduce our result.

Let us first recall the following observability inequality which is a consequence of Theorem \ref{ThKh2.7} and the same ideas as in \cite{KhMa19}.
\begin{theorem}\label{t5.7}
	Let $a\in L^{\infty}(\Omega_T)$,   $b\in L^{\infty}(\Gamma_T)$, $B\in L^{\infty}(\Omega_T)^N$, $B_{\Gamma}\in L^{\infty}(\Gamma_T)^N$   and $\Phi=(\varphi, \varphi_\Gamma)$ the solution  to \eqref{1.5}. Then, one has
	\begin{equation}\label{5.233}
	\lVert\varphi(0)\lVert^2_{L^2(\Omega)}   +  \lVert\varphi_{\Gamma}(0)\lVert^2_{L^2(\Gamma)}\leq C(T, a,b,B,B_{\Gamma}) \int_{\omega_T}|\varphi|^2 dx dt,     
	\end{equation}
	where
	\begin{align*}
	\small
	C(T, a,b,B,B_{\Gamma})&= \exp\Big(C\big(1+ \frac{1}{T} + T(\lVert a\lVert_{\infty} + \lVert b\lVert_{\infty}+\|B\|_{\infty}+\|B_{\Gamma}\|_{\infty}) \\
	&+\|B\|^2_{\infty}+ \|B_{\Gamma}\|^2_{\infty} + \lVert a\lVert^{\frac{2}{3}}_{\infty}+ \lVert b\lVert^{\frac{2}{3}}_{\infty}\big)\Big).
	\end{align*}
\end{theorem}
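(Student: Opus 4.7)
The strategy is to apply the Carleman estimate of Theorem \ref{ThKh2.7} to the adjoint system \eqref{1.5} after rewriting the lower-order terms as source terms in the form of \eqref{NE1.5}, then to absorb these source contributions by choosing $s,\lambda$ large depending on the $L^\infty$-norms of the coefficients, and finally to pass from the weighted estimate to the standard observability inequality via energy dissipation.

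First, I would rewrite \eqref{1.5} in the form \eqref{NE1.5} by setting
\[
F_{0}=-a\varphi,\quad F=-\varphi B,\quad F_{\Gamma,0}=-b\varphi_{\Gamma}-\varphi_{\Gamma}B\!\cdot\!\nu,\quad F_{\Gamma}=-\varphi_{\Gamma}B_{\Gamma},
\]
so that the equations indeed take the form $-\partial_t\varphi-\mathrm{div}(\mathcal A\nabla\varphi)=F_0-\mathrm{div}(F)$ and similarly on the boundary. Substituting into \eqref{Kh.24}, the right-hand side produces the lower-order terms
\[
\int_{\Omega_T}e^{-2s\alpha}\xi^{2}\|a\|_\infty^{2}|\varphi|^{2}+s\lambda^{2}\!\!\int_{\Omega_T}\!e^{-2s\alpha}\xi^{2}\|B\|_\infty^{2}|\varphi|^{2}+\int_{\Gamma_T}e^{-2s\alpha}\xi^{2}(\|b\|_\infty^{2}+\|B\|_\infty^{2})|\varphi_\Gamma|^{2}+s\lambda^{2}\!\!\int_{\Gamma_T}\!e^{-2s\alpha}\xi^{2}\|B_\Gamma\|_\infty^{2}|\varphi_\Gamma|^{2}.
\]
Each of these is dominated by the weighted left-hand side terms $s^{3}\lambda^{4}\int\xi^{3}e^{-2s\alpha}|\varphi|^{2}$ and $s^{3}\lambda^{3}\int\xi^{3}e^{-2s\alpha}|\varphi_\Gamma|^{2}$ provided that $s^{3}\lambda^{4}\xi\geq C(\|a\|_\infty^{2}+\|b\|_\infty^{2})$ and $s^{2}\lambda^{2}\xi\geq C(\|B\|_\infty^{2}+\|B_\Gamma\|_\infty^{2})$; since $\xi\geq 4e^{\lambda m\|\eta_{0}\|_\infty}/T^{2}$, taking $s\geq s_{1}+C\bigl(T^{2}(\|a\|_\infty^{2/3}+\|b\|_\infty^{2/3})+T(\|B\|_\infty+\|B_\Gamma\|_\infty)\bigr)$ suffices to absorb all four terms. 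Fixing such an $s$ and a suitable $\lambda$, I obtain
\begin{equation}\label{planeq1}
s^{3}\lambda^{4}\int_{\Omega_T}\xi^{3}e^{-2s\alpha}|\varphi|^{2}+s^{3}\lambda^{3}\int_{\Gamma_T}\xi^{3}e^{-2s\alpha}|\varphi_\Gamma|^{2}\leq C\,s^{3}\lambda^{4}\!\int_{\omega_T}\xi^{3}e^{-2s\alpha}|\varphi|^{2}.
\end{equation}

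Next, to pass from \eqref{planeq1} to an estimate on $\|\Phi(0)\|_{\mathbb L^{2}}$, I restrict the left-hand side integrals to the subinterval $[T/4,3T/4]$, where $\xi$ and $e^{-2s\alpha}$ are bounded above and below by positive constants depending only on $s$, $\lambda$, $T$ and $\Omega$. This gives
\[
\int_{T/4}^{3T/4}\!\!\bigl(\|\varphi(t)\|_{L^{2}(\Omega)}^{2}+\|\varphi_\Gamma(t)\|_{L^{2}(\Gamma)}^{2}\bigr)\,dt\leq C_{T}^{\mathrm{R}}\int_{\omega_T}|\varphi|^{2}\,dx\,dt,
\]
with $C_{T}^{\mathrm R}$ carrying the explicit dependence coming from the chosen $s,\lambda$. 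Then I apply the standard backward energy (dissipation) estimate to the adjoint system \eqref{1.5}: multiplying by $\Phi$, integrating by parts on $(0,t^\star)$ for $t^\star\in[T/4,3T/4]$, and controlling the contributions of $a,b,B,B_\Gamma$ by Cauchy–Schwarz and Young, Gr\"onwall's lemma gives
\[
\|\Phi(0)\|_{\mathbb L^{2}}^{2}\leq e^{C\,T(\|a\|_\infty+\|b\|_\infty+\|B\|_\infty^{2}+\|B_\Gamma\|_\infty^{2})}\|\Phi(t^\star)\|_{\mathbb L^{2}}^{2}.
\]
Averaging in $t^\star$ over $[T/4,3T/4]$ and combining with the previous inequality yields \eqref{5.233}, and tracking all the exponential factors produced by the choice of $s$ and by Gr\"onwall yields precisely the announced $C(T,a,b,B,B_\Gamma)$.

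The step I expect to be delicate is the bookkeeping of constants in the absorption argument, because the three contributions $\frac{1}{T}$, $T(\|a\|_\infty+\cdots)$ and $\|a\|_\infty^{2/3}+\|b\|_\infty^{2/3}$ appearing in the exponential of $C(T,a,b,B,B_\Gamma)$ come from distinct sources (the lower bound of $\xi$, the weight $e^{-2s\alpha}$ on $[T/4,3T/4]$, and the minimal admissible $s$ to absorb $a,b$), and they must be carefully separated so that the $L^{\infty}$-norms enter with exactly the correct powers. Everything else is a direct application of Theorem~\ref{ThKh2.7} and standard parabolic energy estimates adapted to the dynamic-boundary setting via the framework of Section~\ref{WP0.1}.
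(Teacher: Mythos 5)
Your strategy --- recasting \eqref{1.5} in the form \eqref{NE1.5}, applying the Carleman estimate of Theorem \ref{ThKh2.7}, absorbing the lower-order source terms by taking $s$ large in terms of the $L^\infty$-norms of $a,b,B,B_\Gamma$, and then combining the resulting weighted estimate on the middle time interval with a Gr\"onwall-type dissipation estimate --- is exactly the route the paper intends (it omits the proof, citing Theorem \ref{ThKh2.7} and the ideas of \cite{KhMa19}), so the proposal is essentially the paper's argument. One small slip: with $F=-\varphi B$ the term $F\cdot\nu$ in \eqref{NE1.5} already accounts for the $\varphi_\Gamma B\cdot\nu$ term of \eqref{1.5}, so the correct identification is $F_{\Gamma,0}=-b\varphi_\Gamma$ rather than $-b\varphi_\Gamma-\varphi_\Gamma B\cdot\nu$; this double-counting does not affect the absorption argument or the final constant.
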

\begin{remark}
	(i)\; 	Observe that \eqref{5.233} provides precise estimate on how the observability constant depends on $T$ and size of the parameters $a$, $b$, $B$ and $B_{\Gamma}$. This will be essential when dealing with the semilinear problems.
	
	(ii)\; 
	As proved in {\cite{B8}}, the null controllability result is a consequence of the observability inequality (Theorem \ref{t5.7}), while we can deduce the approximate controllability directly from the Carleman estimate (Lemma \ref{l5.8}).
	
	(iii)\; 
	From the inequality \eqref{5.233}, we deduce a bound  to the  cost of null controllability. Namely, we have, for a positive constant $C>0$ depending on $\omega$,
	\begin{equation*}
	\mathcal{C}^0(Y_0, 0,\omega)\leq C(T, a,b,B,B_{\Gamma})\|Y_0\|_{\mathbb{L}^2},
	\end{equation*}
	where   $$\mathcal{C}^0(Y_0,0, \omega)= \inf_{v\in \mathcal{U}_{ad} (Y_0, 0,\omega)}\|v\|_{L^2(\omega_T)}, \,\,\,\text{and}$$ $$\mathcal{U}_{ad} (Y_0,0, \omega)= \big\{ v\in L^2(\omega_T): y(T)=0,\, y_{\Gamma}(T)=0 \big\}.$$
\end{remark}
\vspace{0.3cm}
\par Now, we are  ready to prove the main result of  this paper:  
\begin{proof}[Proof of Theorem \ref{t5.6}]	Following {\cite{B4}}, let us rewrite $J^{\varepsilon}_{Y_{1}, \omega}$ as follows
	\begin{align*} 
	J^{\varepsilon}_{Y_1,\omega}(\Phi_T) &=   J^{\delta}_{ Y_{1},\omega}(\Phi_T)  +     \varepsilon\lVert \Phi_T\lVert_{\mathbb{L}^2(\Gamma)}  - \big\langle Y_1, \Phi_T-\Phi(T-\delta)\big\rangle_{\mathbb{L}^2},                                                         
	\end{align*}
	where
	\begin{equation*}
	J^{\delta}_{ Y_{1},\omega}(\Phi_T)=  \frac{1}{2}\int_{\omega_{T}}|\varphi|^2\, dx dt \, -\, \big\langle Y_1, \Phi(T-\delta)\big\rangle_{\mathbb{L}^2} ,\,\, \forall \,\,\Phi_T\in\mathbb{L}^2. 
	\end{equation*}
	Here, $\Phi=(\varphi, \varphi_{\Gamma} )$ is the solution to the system adjoint \eqref{1.5}, with final data $\Phi_T.$ 
	The positive number $\delta>0$, small enough, will be fixed later  such that  
	\begin{equation}\label{0.11}
	\varepsilon\lVert \Phi_T\lVert_{\mathbb{L}^2}  - \big\langle Y_1, \Phi_T-\Phi(T-\delta)\big\rangle_{\mathbb{L}^2} \geq 0, \,\, \,\,\Phi_T\in\mathbb{L}^2. 
	\end{equation}
	With this choice of $\delta$, we have 
	\begin{equation}\label{0.12}
	I_1= \inf_{\Phi_{T}\in \mathbb{L}^2} J^{\varepsilon}_{Y_1,\omega}(\Phi_T)\geq\inf_{\Phi_{T}\in \mathbb{L}^{2}} J^{\delta}_{Y_{1},\omega}(\Phi_T)=I_2.
	\end{equation}
	By Remark \ref{R5.5}, we have
	\begin{equation*}
	I_1= \inf_{\Phi_{T}\in \mathbb{L}^2} J^{\varepsilon}_{Y_1,\omega}(\Phi_T)=-\frac{1}{2} \int_{\omega_T}|\widehat{\varphi}|^2 \,\,dx dt, 
	\end{equation*}           
	and hence 
	\begin{equation*}
	\mathcal{C}(0,Y_1,\varepsilon, \omega)^2=  \int_{\omega_T}|\widehat{\varphi}|^2 \,\,dx dt\leq -2I_2 .
	\end{equation*}
	Thus, our task is reduced to estimate $I_2$ and choose $\delta$ such that (\ref{0.11}) holds.
	From the observability inequality \eqref{5.233},
	we have
	\begin{equation}\label{4.42}
	\lVert\varphi(T -\delta)\lVert^2_{L^2(\Omega)}   +  \lVert\varphi_{\Gamma}(T -\delta)\lVert^2_{L^2(\Gamma)}\leq C_{\delta}(a,b,B, B_{\Gamma})  \int_{\omega_T}|\varphi|^2 \,\,dx dt,     
	\end{equation}
	where \begin{align*}
	\ln( C_{\delta}(a,b,B, B_{\Gamma}))&=C\Big(1+ \frac{1}{\delta} + \delta(\lVert a\lVert_{\infty} + \lVert b\lVert_{\infty}\\ 
	&+\|B\|_{\infty}+\|B_{\Gamma}\|_{\infty}) +\|B\|^2_{\infty}+\|B_{\Gamma}\|^2_{\infty}
	+ \lVert a\lVert^{\frac{2}{3}}_{\infty}+ \lVert b\lVert^{\frac{2}{3}}_{\infty}\Big)\\
	&= \tilde{C}_{\delta}(a,b,B, B_{\Gamma}).
	\end{align*}
	Using \eqref{4.42} and the definition of $I_2$, one deduces
	\begin{equation*}
	I_2\geq -\frac{1}{2}  C_{\delta}(a,b,B, B_{\Gamma}) \lVert Y_1\lVert^2_{\mathbb{L}^2}.
	\end{equation*}
	Indeed, using the observability inequality \eqref{4.42}, one has
	{\small
		\begin{align*} 
		\small
		&	J_{\delta, Y_1}(\Phi_T) = \frac{1}{2}\big[\|\varphi\|^2_{L^2(\omega_T)} -2\big\langle Y_1, \Phi(T-\delta)\big\rangle_{\mathbb{L}^2}\big]\\
		&\geq \frac{1}{2}\big[ \exp\big(-\tilde{C}_{\delta}(a,b,B, B_{\Gamma})\big) \|\Phi(T-\delta)\|^2_{\mathbb{L}^2}
		-2\big\langle Y_1, \Phi(T-\delta)\big\rangle_{\mathbb{L}^2}\big] \\
		& =\frac{1}{2} \exp\big(-\tilde{C}_{\delta}(a,b,B, B_{\Gamma})\big) \times\\
		&\quad \quad\Big[\big\|\Phi(T-\delta) - \exp\big(-\tilde{C}_{\delta}(a,b,B, B_{\Gamma}))\big)Y_1\big\|^2_{\mathbb{L}^2}- 
		\exp\big(2C\big(-\tilde{C}_{\delta}(a,b,B, B_{\Gamma})\big)\|Y_1\|^2_{\mathbb{L}^2}\Big]\\
		& \geq - \frac{1}{2}\exp\big(C\big(-\tilde{C}_{\delta}(a,b,B, B_{\Gamma})\big)\|Y_1\|^2_{\mathbb{L}^2}.   
		\end{align*}}          
	Hence, we obtain the following estimate
	\begin{equation}\label{F4.44}
	\mathcal{C}(0,Y_1,\varepsilon, \omega)^2\leq - I_2\leq \exp\big(2\tilde{C}_{\delta}(a,b,B, B_{\Gamma})\big)\lVert Y_1\lVert^2_{\mathbb{L}^2}.  
	\end{equation}
	Now, going back to (\ref{0.11}), we observe that it suffices to look for $\delta >0$ such that
	\begin{equation}
	\Big | \big\langle Y_1, \Phi_T - \Phi(T-\delta)\big\rangle_{\mathbb{L}^2}\Big|\leq\varepsilon\lVert \Phi_T\lVert_{\mathbb{L}^2}.  
	\end{equation}  
	Let us prove that   
	{\small  
		\begin{align}\label{4.46}
		&\big| \big\langle Y_1, \Phi_T - \Phi(T-\delta)\big\rangle_{\mathbb{L}^2}\big|\leq\nonumber\\		
		&  L(\delta, a,b,B, B_{\Gamma})\Big[(\|a\|_{\infty}+\|a\|_{\infty})   \|Y_1\|_{\mathbb{L}^2} +(\|B\|_{\infty}+\|B_{\Gamma}\|_{\infty})   \|Y_1\|_{\mathbb{H}^1}   \lVert \Phi_T\lVert_{\mathbb{L}^2}\Big]\nonumber\\
		& + C\delta\|Y_1\|_{\mathbb{H}^{2}}\lVert \Phi_T\lVert_{\mathbb{L}^2}, 
		\end{align} }
	with 
	{\small 
		\begin{equation*}
		L(\delta, a,b,B, B_{\Gamma})=\frac{e^{\delta(\|a\|_{\infty}+\|b\|_{\infty}+\|B\|^2_{\infty}+\|B_{\Gamma}\|^2_{\infty})}-1}{\|a\|_{\infty}+\|b\|_{\infty}+\|B\|^2_{\infty}+\|B_{\Gamma}\|^2_{\infty}}.
		\end{equation*} }
	To finish, if we assume \eqref{4.46} proved, we choose $\delta$  such that \,\, {\small $$C\delta\|Y_1\|_{\mathbb{H}^{2}} +L(\delta, a,b,B, B_{\Gamma})\Big[(\|a\|_{\infty}+\|a\|_{\infty})   \|Y_1\|_{\mathbb{L}^2} +(\|B\|_{\infty}+\|B_{\Gamma}\|_{\infty})   \|Y_1\|_{\mathbb{H}^1}\Big] \leq\varepsilon,$$}
	and we estimate the quantity $$1+ \frac{1}{\delta} + \delta(\lVert a\lVert_{\infty} + \lVert b\lVert_{\infty}+\|B\|_{\infty}+\|B\|^2_{\infty}+\|B_{\Gamma}\|_{\infty}+\|B_{\Gamma}\|^2_{\infty}).$$
	Let us start from the identity
	\begin{equation}
	\Phi(T-\delta) = e^{\delta A}\Phi_T -\int_{0}^{\delta}e^{(\delta -s) A}F_T(s) ds,
	\end{equation}   
	where 
	{\small 
		\begin{align*}
		&F_T(s)= \Big(a(T-s)\varphi(T-s),b(T-s)\varphi_{\Gamma}(T-s) \Big)\\
		&-\Big(\text{div}(B(T-\delta)\varphi(T-s)),\text{div}_{\Gamma}(B_{\Gamma}(T-s)\varphi_{\Gamma}(T-s))\Big).
		\end{align*}     }               
	With these notations, one has
	{\small 
		\begin{equation}\label{0.133}
		\big\langle Y_1, \Phi_T - \Phi(T-\delta)\big\rangle_{\mathbb{L}^2}=  \big\langle Y_1, \Phi_T - e^{\delta A}\Phi_T \big\rangle_{\mathbb{L}^2} + \big\langle Y_1,  \int_{0}^{\delta}e^{(\delta -s) A}F_T(s) ds\big\rangle_{\mathbb{L}^2}.
		\end{equation} }
	In the sequel,  we estimate the two terms on the right hand side of \eqref{0.133}.  Let us starting by the first term, that is, \,\,$\big\langle Y_1, \Phi_T - e^{\delta A}\Phi_T \big\rangle_{\mathbb{L}^2}.$
	We have 
	\begin{equation*}
	\big\langle Y_1, \Phi_T - e^{\delta A}\Phi_T \big\rangle_{\mathbb{L}^2}\leq \lVert Y_1\lVert_{\mathbb{H}^2}\lVert \Phi_T - e^{\delta A}\Phi_T \lVert_{\mathbb{H}^{-2}}.
	\end{equation*} 
	Here, $\mathbb{H}^{-2}$ designates the dual space  of $\mathbb{H}^{2}$ with respect to the pivot space $\mathbb{L}^2$.
	In order to estimate $ \lVert \Phi_T - e^{\delta A}\Phi_T\lVert_{\mathbb{H}^{-2}}$, let us introduce the following homogeneous Cauchy problem with initial data $\Phi_T$, that is,
	\begin{equation}\label{4.51}
	\left\{
	\begin{array}{ll}
	X'(t) = AX(t)\quad  &t\in (0,T),  \\
	X(0)=\Phi_T\in \mathbb{L}^2  .
	\end{array}
	\right.
	\end{equation}
	We have                         
	\begin{align*}
	\Vert \Phi_T - e^{\delta A}\Phi_T \Vert_{\mathbb{H}^{-2}} & =\left\Vert \int_{0}^{\delta} X'(t) dt \right\Vert_{\mathbb{H}^{-2}}\\
	& \leq \int_{0}^{\delta}  \left\Vert  AX(t)\right\Vert_{\mathbb{H}^{-2}}  dt\\
	&\leq C \int_{0}^{\delta}  \Vert X(t) \Vert_{\mathbb{L}^{2}}dt\leq C\delta \max_{0\leq t\leq \delta}\lVert X(t)\lVert_{\mathbb{L}^{2}}. 
	\end{align*}
	Multiplying \eqref{4.51}  by $X$ and integrating, we have
	\begin{equation*}
	\frac{1}{2} (\|X(t)\|^2_{\mathbb{L}^2}- \|X(0)\|^2_{\mathbb{L}^2})=\big\langle AX(t),X(t)\big\rangle_{\mathbb{L}^{2}}
	\end{equation*}  
	for all $0\leq t\leq\delta$.	Since $A$ is negative, one deduces  
	\begin{equation*}         
	\max_{0\leq t\leq \delta}\lVert X(t)\lVert_{\mathbb{L}^{2}}  \leq \lVert \Phi_T\lVert_{\mathbb{L}^{2}}, 
	\end{equation*}   
	and 	hence, 
	\begin{equation*}         
	\lVert \Phi_T - e^{\delta A}\Phi_T \lVert_{\mathbb{H}^{-2}}\leq C\delta \lVert \Phi_T\lVert_{\mathbb{L}^{2}}.
	\end{equation*} 
	We have then, as a first conclusion
	\begin{equation} \label{F2.48}        
	\big\langle Y_1, \Phi_T - e^{\delta A}\Phi_T \big\rangle_{\mathbb{L}^2}\leq C\delta \lVert Y_1\lVert_{\mathbb{H}^{2}} \lVert \Phi_T\lVert_{\mathbb{L}^{2}}.
	\end{equation} 
	\vspace{0.1 cm} 
	Now, we shall estimate the term  $\displaystyle\big\langle Y_1, \int_{0}^{\delta}e^{(\delta -s) A}F_T(s) ds \big\rangle_{\mathbb{L}^2}$. 
	We know that the operator $A$ is a negative and self-adjoint then, by means of Lumer-Phillips Theorem, it generates a $C_0$-semigroup of contraction. Then, we have
	\begin{align}         
	\displaystyle\big\langle Y_1, \int_{0}^{\delta}e^{(\delta -s) A}F_T(s)  \big\rangle_{\mathbb{L}^2} ds &= \int_{0}^{\delta}\big\langle Y_1, e^{(\delta -s) A}F_T(s)  \big\rangle_{\mathbb{L}^2} ds \nonumber\\
	&\leq(\|a\|_{\infty} +\|b\|_{\infty}  )\lVert Y_1\lVert_{\mathbb{L}^{2}} \int_{0}^{\delta}\lVert \Phi(T-s) \lVert_{\mathbb{L}^{2}} ds\nonumber \\
	& +  (\|B\|_{\infty}  +\|B_{\Gamma}\|_{\infty})\lVert Y_1\lVert_{\mathbb{H}^{1}}  \int_{0}^{\delta}\lVert \Phi(T-s) \lVert_{\mathbb{L}^{2}} ds.\label{F2.49}
	\end{align}          
	On the other hand, multiplying  the adjoint system by $\Phi=(\varphi, \varphi_{\Gamma})$ and integrating, we get 
	\begin{align*}
	&-\frac{1}{2}\frac{d}{dt}(\|\Phi(t)\|_{\mathbb{L}^2}^2) +   \int_{\Omega}\mathcal{A}\nabla\varphi\cdot\nabla \varphi dx \, -  \int_{\Omega}\varphi B \cdot\nabla \varphi dx+ \int_{\Omega} a |\varphi|^2 dx \\
	& + \int_{\Gamma}\mathcal{A}_{\Gamma}\nabla_\Gamma \varphi_\Gamma\cdot \nabla_\Gamma \varphi_\Gamma d\sigma \,-\int_{\Omega}\varphi B_{\Gamma} \cdot\nabla_{\Gamma} \varphi_{\Gamma} d\sigma  + \int_{\Gamma} b |\varphi_\Gamma|^2 d\sigma =0.
	\end{align*}
	By young inequality, and for nonnegative $\lambda$, we have 
	\begin{equation*}
	\int_{\Omega}\varphi B \cdot\nabla \varphi dx\leq \frac{\lambda}{2}\|B\|^2_{\infty}\|  \varphi\|^2_{L^2(\Omega)}  + \frac{1}{2\lambda}\| \nabla\varphi\|^2_{L^2(\Omega)},
	\end{equation*}
	\begin{equation*}
	\int_{\Gamma}\varphi B_{\Gamma} \cdot\nabla_{\Gamma} \varphi_{\Gamma} dx\leq \frac{\lambda}{2}\|B_{\Gamma}\|^2_{\infty}\|  \varphi_{\Gamma}\|^2_{L^2(\Gamma)}  + \frac{1}{2\lambda}\| \nabla_{\Gamma}\varphi_{\Gamma}\|^2_{L^2(\Gamma)}.
	\end{equation*}	
	Using the ellipticity of $\mathcal{A}$ and $\mathcal{A}_{\Gamma} $, and choosing $\lambda$ large enough,  we have  
	\begin{align*}
	-\frac{1}{2}\frac{d}{dt}(\|\Phi(t)\|_{\mathbb{L}^2}^2) \leq C(\|a\|_{\infty}+\|b\|_{\infty}+\|B\|^2_{\infty}+\|B_{\Gamma}\|^2_{\infty})\|\Phi(t)\|_{\mathbb{L}^2}^2.
	\end{align*}
	This proves in particular that the real function
	\begin{equation*}         
	t\longmapsto e^{t(\|a\|_{\infty}+\|b\|_{\infty}+\|B\|^2_{\infty}+\|B_{\Gamma}\|^2_{\infty})}\|\Phi(t)\|_{\mathbb{L}^2}^2
	\end{equation*}  
	is increasing. Therefore, 
	\begin{align*}         
	\int_{0}^{\delta}\|\Phi(T-s)\|_{\mathbb{L}^2}\,ds&\leq\int_{0}^{\delta} e^{s(\|a\|_{\infty}+\|b\|_{\infty}+\|B\|^2_{\infty}+\|B_{\Gamma}\|^2_{\infty})}\,ds\|\Phi_T\|_{\mathbb{L}^2}\\
	&=   \frac{e^{\delta(\|a\|_{\infty}+\|b\|_{\infty}+\|B\|^2_{\infty}+\|B_{\Gamma}\|^2_{\infty})}-1}{\|a\|_{\infty}+\|b\|_{\infty}+\|B\|^2_{\infty}+\|B_{\Gamma}\|^2_{\infty}}\|\Phi_T\|_{\mathbb{L}^2}.
	\end{align*}
	From the last inequality, \eqref{F2.48} and \eqref{F2.49} we  deduce \eqref{4.46}.
	It is suffices to take $\delta$ such that 
	{\small 
		\begin{equation*}
		C\delta\|Y_1\|_{\mathbb{H}^{2}} +     L(a,b,B, B_{\Gamma}) \Big[(\|a\|_{\infty} +  \|b\|_{\infty} )\|Y_1\|_{\mathbb{L}^2}  +  (\|B\|_{\infty} +  \|B_{\Gamma}\|_{\infty} ) \|Y_1\|_{\mathbb{H}^1}\Big]\leq \varepsilon.
		\end{equation*} }
	Choosing  
	{\small
		\begin{align*}
		\delta=\min\left\{ T , \frac{\varepsilon}{3C\|Y_1\|_{\mathbb{H}^{2}}}, K_1 , K_2\right\}, 
		\end{align*}}
	where
	{\small
		\begin{align*}\label{4.63}
		K_1 &= \frac{1}{\|a\|_{\infty} + \|b\|_{\infty} +\|B\|^2_{\infty}+\|B_{\Gamma}\|^2_{\infty}} \ln(1+ \frac{\varepsilon}{3(\|a\|_{\infty} + \|b\|_{\infty})\|Y_1\|_{\mathbb{L}^{2}}})\\
		K_2 &= \frac{1}{\|a\|_{\infty} + \|b\|_{\infty} +\|B\|^2_{\infty}+\|B_{\Gamma}\|^2_{\infty}} \ln(1+ \frac{\varepsilon}{3(\|B\|^2_{\infty} + \|B_{\Gamma}\|^2_{\infty})\|Y_1\|_{\mathbb{H}^{1}}}),
		\end{align*}}
	we can easily show that 
	{\small
		\begin{align*}
		&1\, +\,\frac{1}{\delta}\, + \,\delta\,(\,\|a\|_{\infty}\, +\, \|b\|_{\infty}+ \|B\|^2_{\infty}+\|B_{\Gamma}\|^2_{\infty})\\
		&\leq C\big(1 \,+ \,\frac{1}{T}\, + \,T(\|a\|_{\infty}\, +\, \|b\|_{\infty}+ \|B\|^2_{\infty}+\|B_{\Gamma}\|^2_{\infty}) \,\,+\\
		&\,\,\frac{1}{\varepsilon}\Big(\|Y_1\|_{\mathbb{H}^{2}} + (\|a\|_{\infty} + \|b\|_{\infty})\|Y_1\|_{\mathbb{L}^{2}} +(\|B\|^2_{\infty} + \|B_{\Gamma}\|^2_{\infty})\|Y_1\|_{\mathbb{H}^{1}}\\
		&+ \|a\|_{\infty}+\|b\|_{\infty}+\|B\|^2_{\infty}+\|B_{\Gamma}\|^2_{\infty}\Big).
		\end{align*} }

	Finally, using this inequality  and  \eqref{F4.44}, we deduce  the desired inequality.  \qed
\end{proof} 
\begin{remark}
	It is important to notice that Theorem \ref{t5.6} guarantees that $v$
	remains uniformly bounded when the potentials $a$, $b$, $B$ and $B_{\Gamma}$ remain bounded in $L^{\infty}(\Omega)$, $L^{\infty}(\Gamma_T)$, $(L^{\infty}(\Omega))^N$ and $(L^{\infty}(\Gamma_T))^N$, respectively. This shall play  a crucial role when dealing with the semilinear case.
\end{remark}

\section{ The  semilinear case }\label{sss.3}
In this section we deduce the cost of approximate controllability of  semilinear heat equation with dynamic boundary conditions as a consequence of the results obtained in the linear case and a fixed point technique described  by E. Zuazua in \cite{Zucnl}.
Let us  consider the following system
\begin{equation}
\left\{\begin{array}{ll}
{\partial_t y-\text{div}(\mathcal{A}\nabla y)  + F (y,\nabla y)= 1_{\omega}v }& {\text { in } \Omega_T,} \\
{\partial_t y_{_{\Gamma}}   - \text{div}_{\Gamma}(\mathcal{A}_{\Gamma}\nabla_{\Gamma} y_{\Gamma})    + \partial_{\nu}^{\mathcal{A}}y+ G (y_{\Gamma},\nabla_{\Gamma} y_{\Gamma})=0} &\,\,{\text {on} \,\Gamma_T,} \\
y_{\Gamma}(x,t) = y\rvert_{\Gamma}(x,t) &\,\,\text{on } \Gamma_T, \\
{(y(0),y_{\Gamma}(0)) =(y_0,y_{\Gamma,0})} & {\text { in } \Omega \times \Gamma}, \label{nl2.62}
\end{array}\right.
\end{equation}
with  $F,G\in C^1(\mathbb{R})$ such that $F(0)=G(0)=0$. We assume furthermore that there exist $L_F , L_G>0$ such that
\begin{align}
|F(x,\zeta)-F(y,\xi)|&\leq L_F(|x-y| + |\zeta- \xi|)\label{Lip2.63}\\
|G(x,\zeta)-G(y,\xi)|&\leq L_G(|x-y| + |\zeta- \xi|) \label{Lip2.64}
\end{align}
for all \,\,$x, y \in \mathbb{R}$\, and \,\,$\zeta, \xi \in \mathbb{R}^N$.  Recall that for $Y_0=(y_0, y_{\Gamma,0})\in \mathbb{L}^2$ and $v\in L^2(\omega_T)$, under assumptions \eqref{Lip2.63}-\eqref{Lip2.64},  \eqref{nl2.62} has a unique weak  solution which  belongs to $C([0, T]; \mathbb{L}^2) \cap L^2(0, T; \mathbb{H}^1)$. For some results on controllability issue for nonlinear static heat equation, we refer, for instance, to  \cite{Ba02, Ba00, Ba99}. See also the monograph \cite{Cor12}. 

Following the description given by Zuazua and Fabre et al.  in  \cite{FPZ'93} and \cite{Zucnl}, respectively, the system \eqref{nl2.62} can be rewritten as 
{\small
	\begin{equation}
	\left\{\begin{array}{ll}
	{\partial_t y-\text{div}(\mathcal{A}\nabla y) + \tilde{ F}_1(y)y +\tilde{ F}_2(y)\cdot \nabla y= 1_{\omega}v }& {\text { in } \Omega_T,} \\
	{\partial_t y_{_{\Gamma}}   -\text{div}_{\Gamma}(\mathcal{A}_{\Gamma}\nabla_{\Gamma} y_{\Gamma})   + \partial^{\mathcal{A}}_{\nu}y + \tilde{ G}_1(y_{\Gamma})y_\Gamma +\tilde{ G}_2(y)\cdot \nabla_{\Gamma}y_{\Gamma}=0} &\,\,{\text {on} \,\Gamma_T,} \\
	y_{\Gamma}(x,t) = y\rvert_{\Gamma}(x,t) &\,\,\text{on } \Gamma_T, \\
	{(y(0),y_{\Gamma}(0)) =(y_0,y_{\Gamma,0})} & {\text { in } \Omega \times \Gamma} \label{1.72}, 
	\end{array}\right.
	\end{equation}}
where $\tilde{ F}_1$, $\tilde{ F}_2$, $\tilde{ G}_1$ and $\tilde{ G}_2$ are  real functions satisfying
\begin{align*}
\|\tilde{ F}_1(y)\|_{\infty}&\leq L_F\,\, \text{and}\,\, \|\tilde{ G}_1(y_\Gamma)\|_{\infty}\leq L_G,\\
\|\tilde{ F}_2(y)\|_{\infty}&\leq L_F\,\, \text{and}\,\, \|\tilde{ G}_2(y_\Gamma)\|_{\infty}\leq L_G
\end{align*}
for all  $y\in L^2(0,T; H^1(\Omega))$ and\, $y_\Gamma\in L^2(0,T; H^1(\Gamma))$, see \cite{Zucnl} for the explicit expression of these functions.

Fix $\overline{Y}=(\overline{y},\overline{y}_\Gamma)$ $\in L^2(0, T; \mathbb{H}^1)$, $Y_1\in \mathbb{L}^2$, $\varepsilon>0$, and denote  $a=\tilde{ F}_1(\overline{y})$, $b=\tilde{ G}_1(\overline{y}_\Gamma)$, $B=\tilde{ F}_2(\overline{y}_\Gamma)$ and $B_{\Gamma}=\tilde{ G}_2(\overline{y}_\Gamma)$.   The results of the previous sections  above (the linear case) show that there exists a unique  control $v=v(\overline{Y})$, control with minimal norm, such that the solution $ Y=(y, y_\Gamma)$ of the  following  linearized  system 
{\small
	\begin{equation}
	\left\{\begin{array}{ll}
	{\partial_t y-\text{div}(\mathcal{A}\nabla y)  +B(x,t)\cdot \nabla y+ a(x,t)y= 1_{\omega}v }& {\text { in } \Omega_T,} \\
	{\partial_t y_{_{\Gamma}}   - \text{div}_{\Gamma}(\mathcal{A}_{\Gamma}\nabla_{\Gamma} y_{\Gamma})    + \partial_{\nu}^{\mathcal{A}}y+B_{\Gamma}(x, t)\cdot \nabla_{\Gamma} y_{\Gamma} + b(x,t)y_{\Gamma}=0} &\,\,{\text {on} \,\Gamma_T,} \\
	y_{\Gamma}(x,t) = y\rvert_{\Gamma}(x,t) &\,\,\text{on } \Gamma_T, \\
	{(y(0),y_{\Gamma}(0)) =(y_0,y_{\Gamma,0})} & {\text { in } \Omega\times \Gamma} \label{nl4.744}
	\end{array}\right.
	\end{equation}}
satisfies the approximate controllability property, i.e.,
\begin{equation}
\| Y(T, \overline{Y} )- Y_1\|_{\mathbb{L}^2}\leq \varepsilon.
\end{equation}
To deduce  the approximate controllability result for \eqref{nl2.62}, it suffices to prove that the nonlinear mapping
\begin{align*}
\mathcal{N}: L^2(0, T; \mathbb{H}&^1)\longrightarrow L^2(0, T; \mathbb{H}^1)\\
& \overline{Y}\longmapsto Y,\nonumber 
\end{align*}
where  $Y = \mathcal{N}(\overline{Y})$ is the solution to \eqref{nl4.744} associated to $v=v(\overline{Y})$, $a=\tilde{ F}(\overline{y})$ and $b=\tilde{ G}(\overline{y}_\Gamma)$, admits a fixed point. Indeed, if  $\hat{Y}$ is a fixed point of $\mathcal{N}$, the control $v(\hat{Y})$ is one we are looking for, that is $v(\hat{Y})$ is a control for the semilinear problem \eqref{nl2.62}.
\par In order to study this semilinear case, we need to make precise the dependence of the solution $\Phi$ of the adjoint system \eqref{1.5} and the minimum $\hat{\Phi}_T$ of $J^{\varepsilon}_{Y_1}(\cdot)$ with parameters $a$, $b$, $B$ and $B_{\Gamma}$. To this end, let us introduce the functionals
\begin{align}
\mathcal{L}: \mathbb{L}^{\infty}_T\times(\mathbb{L}^{\infty}_T)^N\times\mathbb{L}^2\longrightarrow L^2(0, T; \mathbb{L}^2); \big((a,b),(B, B_{\Gamma}), \Phi_T\big)\longmapsto \Phi,  
\end{align} 
and 
\begin{align}
\mathcal{M}: \mathbb{L}^{\infty}_T\times (\mathbb{L}^{\infty}_T)^N\longrightarrow \mathbb{L}^2;
((a,b), (B, B_{\Gamma}))\longmapsto \hat{\Phi}_T,
\end{align}
where  $\mathbb{L}^{\infty}_T=L^{\infty}(\Omega_T)\times L^{\infty}(\Gamma_T)$.
The functionals  $\mathcal{L}$ and  $\mathcal{M}$ satisfy the following properties. 
\begin{proposition}\label{SEM.1}
	Let $((a_n, b_n), (B_n, B_{\Gamma, n})) $ be a sequence in $\mathbb{L}^{\infty}_T\times (\mathbb{L}^{\infty}_T)^N $, and $(\Phi_{T,n})$ a sequence in $\mathbb{L}^2.$
	\begin{itemize} 
		\item[(a)]  If $((a_n, b_n), (B_n, B_{\Gamma, n})) $ is bounded in $\mathbb{L}^{\infty}_T\times (\mathbb{L}^{\infty}_T)^N $, then {\small$(\hat{\Phi}_{T, n})=(\mathcal{M}(a_n, b_n) )$} is bounded in $\mathbb{L}^2$.
		\item[(b)] If $(a_n, b_n) \rightharpoonup (a,b) $ weakly $-*$ in $\mathbb{L}^{\infty}_T$,  $(B_n, B_{\Gamma, n}) \rightharpoonup (B, B_{\Gamma}) $ weakly $-*$ in $(\mathbb{L}^{\infty}_T)^N$ and $\Phi_{T,n} \rightharpoonup  \Phi_{T} $ weakly in $\mathbb{L}^{2}$ then, 
		$\mathcal{L}((a_n,b_n), (B_n, B_{\Gamma, n}); \Phi_{T,n} ) \longrightarrow \mathcal{L}((a,b),(B, B_{\Gamma}); \Phi_T)$  strongly in $L^2(0,T;\mathbb{L}^2 ).$
	\end{itemize} 
	
\end{proposition}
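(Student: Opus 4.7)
My plan has two independent arguments. For part (a), I would couple the minimality of $\hat\Phi_{T,n}$ with a controlled ``$\varepsilon/2$-trick''. Since $J^\varepsilon_{Y_1,\omega}(0)=0$, the inequality $J^\varepsilon_{Y_1,\omega}(\hat\Phi_{T,n})\le 0$ reads
\begin{equation*}
\tfrac12\|\hat\varphi_n\|^2_{L^2(\omega_T)} + \varepsilon\|\hat\Phi_{T,n}\|_{\mathbb{L}^2} \le \langle Y_1,\hat\Phi_{T,n}\rangle_{\mathbb{L}^2}.
\end{equation*}
I would then apply Theorem~\ref{t5.6} twice. A first application, with tolerance $\varepsilon$, yields $\|\hat\varphi_n\|_{L^2(\omega_T)}=\mathcal{C}(0,Y_1,\varepsilon,\omega)\le M_1$, the bound being uniform in $n$ since the constants $N$ and $M$ in Theorem~\ref{t5.6} involve only the uniform $L^\infty$-bounds of the coefficients. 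A second application, with the relaxed tolerance $\varepsilon/2$, provides an admissible control $\tilde v_n\in L^2(\omega_T)$ with $\|Y(T,\tilde v_n)-Y_1\|_{\mathbb{L}^2}\le\varepsilon/2$ and $\|\tilde v_n\|_{L^2(\omega_T)}\le M_2$ uniformly. The standard adjoint identity $\langle Y(T,\tilde v_n),\hat\Phi_{T,n}\rangle_{\mathbb{L}^2}=\langle\hat\varphi_n,\tilde v_n\rangle_{L^2(\omega_T)}$ combined with $\|Y_1-Y(T,\tilde v_n)\|_{\mathbb{L}^2}\le\varepsilon/2$ gives
\begin{equation*}
\langle Y_1,\hat\Phi_{T,n}\rangle_{\mathbb{L}^2} \le \|\hat\varphi_n\|_{L^2(\omega_T)}\|\tilde v_n\|_{L^2(\omega_T)} + \tfrac{\varepsilon}{2}\|\hat\Phi_{T,n}\|_{\mathbb{L}^2}.
\end{equation*}
Plugging this into the minimality inequality and absorbing $\tfrac{\varepsilon}{2}\|\hat\Phi_{T,n}\|_{\mathbb{L}^2}$ yields $\tfrac{\varepsilon}{2}\|\hat\Phi_{T,n}\|_{\mathbb{L}^2}\le M_1 M_2$, the desired uniform bound of (a).

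For part (b), the plan is a classical compactness/stability argument. The energy estimate \eqref{est2.10} furnishes a bound on $(\Phi_n)$ in $C([0,T];\mathbb{L}^2)\cap L^2(0,T;\mathbb{H}^1)$ and on $(\partial_t\Phi_n)$ in $L^2(0,T;\mathbb{H}^{-1})$ that is uniform in $n$, since its constant depends only on the $L^\infty$-norms of the coefficients, themselves uniformly bounded by Banach-Steinhaus. The Aubin-Lions-Simon lemma, based on the compact embedding $\mathbb{H}^1\hookrightarrow\mathbb{L}^2$ recalled in Section~\ref{WP0.1}, produces a subsequence converging strongly in $L^2(0,T;\mathbb{L}^2)$ and weakly in $L^2(0,T;\mathbb{H}^1)$ to some limit $\Phi^*$, while compactness of the trace operator $\mathbb{H}^1\to L^2(\Gamma)$ yields the analogous strong convergence of $\varphi^n_\Gamma$ in $L^2(\Gamma_T)$. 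I would then pass to the limit in the weak formulation \eqref{Kh1.12}: the principal part passes by weak convergence in $L^2(0,T;\mathbb{H}^1)$, whereas the zero-order and drift coefficient terms $a_n\varphi_n$, $\varphi_n B_n$, $b_n\varphi_{n,\Gamma}$ and $\varphi_{n,\Gamma}B_{\Gamma,n}$ all converge in $L^2$ by the weak-$*$/strong product lemma. Using the uniform bound on $\partial_t\Phi_n$ in $L^2(0,T;\mathbb{H}^{-1})$ one identifies $\Phi^*(T)=\Phi_T$, and the uniqueness statement of Proposition~\ref{prop2.3} forces $\Phi^*=\mathcal{L}((a,b),(B,B_\Gamma);\Phi_T)$; a standard subsequence argument upgrades the convergence to the full sequence.

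The decisive step is the $\varepsilon/2$-trick in (a): the bare inequality $J^\varepsilon_{Y_1,\omega}(\hat\Phi_{T,n})\le 0$ only yields the trivial compatibility $\varepsilon\le\|Y_1\|_{\mathbb{L}^2}$ and provides \emph{no} direct control on $\|\hat\Phi_{T,n}\|_{\mathbb{L}^2}$, because the penalty $\varepsilon\|\hat\Phi_{T,n}\|$ is exactly of the same order as the dominating term $\langle Y_1,\hat\Phi_{T,n}\rangle\le\|Y_1\|_{\mathbb{L}^2}\|\hat\Phi_{T,n}\|$. It is therefore essential to invoke Theorem~\ref{t5.6} with a \emph{strictly} smaller tolerance in order to manufacture a uniformly bounded admissible competitor $\tilde v_n$, which can then be paired against $\hat\Phi_{T,n}$ through the adjoint identity to generate a slack of size $\varepsilon/2$. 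In part (b), the routine technical point to watch is the passage to the limit in the boundary drift term $\varphi_{n,\Gamma}B_{\Gamma,n}\cdot\nabla_\Gamma v_\Gamma$, which couples weak-$*$ convergence of $B_{\Gamma,n}$ in $L^\infty(\Gamma_T)^N$ with strong convergence of traces in $L^2(\Gamma_T)$, itself obtained from compactness of the trace map on $\mathbb{H}^1$.
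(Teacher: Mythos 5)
Your proposal is correct, but part (a) follows a genuinely different route from the paper. For (a), the paper argues by contradiction: if $\|\hat\Phi_{T,n}\|_{\mathbb{L}^2}\to\infty$ then coercivity of $J^\varepsilon_{Y_1,\omega}$ forces $J_n(\hat\Phi_{T,n})\to\infty$, contradicting the identity $J_n(\hat\Phi_{T,n})=-\tfrac12\|\hat v_n\|^2_{L^2(\omega_T)}$ of Remark~\ref{R5.5} together with the uniform bound on $\|\hat v_n\|$ supplied by Theorem~\ref{t5.6}. Your $\varepsilon/2$-trick replaces the coercivity argument by the explicit chain $\tfrac12\|\hat\varphi_n\|^2+\varepsilon\|\hat\Phi_{T,n}\|\le\langle Y_1,\hat\Phi_{T,n}\rangle\le\|\hat\varphi_n\|\,\|\tilde v_n\|+\tfrac{\varepsilon}{2}\|\hat\Phi_{T,n}\|$, which yields a fully quantitative bound $\|\hat\Phi_{T,n}\|\le 2M_1M_2/\varepsilon$ (in fact $M_2^2/\varepsilon$ suffices, via $\|\hat\varphi_n\|\,\|\tilde v_n\|-\tfrac12\|\hat\varphi_n\|^2\le\tfrac12\|\tilde v_n\|^2$). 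This buys something real: the coercivity established in Lemma~\ref{ll4.1} is qualitative (weak compactness plus unique continuation) and is proved for a \emph{fixed} functional, so the paper's one-line contradiction implicitly requires coercivity uniform over the family $(J_n)$, a point it does not address; your argument avoids this entirely by trading coercivity for a uniformly bounded admissible competitor at tolerance $\varepsilon/2$. Both routes rest on Theorem~\ref{t5.6} and hence implicitly assume $Y_1\in\mathbb{H}^2$, an hypothesis absent from the statement of the proposition but needed in either proof. For (b), your argument coincides with the paper's (energy estimate \eqref{est2.10}, Aubin--Lions--Simon, passage to the limit in \eqref{Kh1.12}, identification by uniqueness); you merely spell out the boundary drift term and the trace compactness, which the paper compresses into ``standard arguments''.
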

\begin{proof}
	For the  first point, assume that there is a subsequence $\hat{\Phi}^n_T$ such that  $\|\hat{\Phi}^n_T\|_{\mathbb{L}^2}\longrightarrow \infty$. Since $J_{\varepsilon}(\cdot)$ is coercive, we have $J_\varepsilon(\hat{\Phi}^n_T)\longrightarrow \infty$. This contradicts the fact that $J_\varepsilon(\hat{\Phi}^n_T)= -\frac{1}{2}\|\hat{v}_n\|^2_{L^2(\omega_T)} $, which is, viewed the boundness of  $a_n$, $b_n$ $B_n$ and $B_{\Gamma, n}$,  bounded.
	To prove the second point, denote $\Phi_n= \mathcal{L}((a_n,b_n), (B_n, B_{\Gamma, n}); \Phi_{T,n})$ and   $\Phi= \mathcal{L}((a,b), (B, B_{\Gamma});\Phi_{T})$. Using the estimation \eqref{est2.10} in Proposition \ref{prop2.3}  we deduce that 
	$(\Phi_n)$ is bounded in $L^2(0,T; \mathbb{H}^1)$ and $(\Phi^{'}_n)$ is bounded in $L^2(0,T; \mathbb{H}^{-1})$. By Aubin-Lions Lemma, see e.g. {\cite[Corollary 4]{Sim87}}, there exists $\Psi \in L^2(0,T; \mathbb{L}^2)$ such that 
	\begin{equation*}
	\Phi_n \longrightarrow\,   \Psi\,\,\, \text{strongly in} \,\, L^2(0,T; \mathbb{L}^2).
	\end{equation*}
	We pass to limit in the definition of the weak solution, and using some standard arguments,  we obtain that $\Psi=\Phi.$   \qed
\end{proof} 
Once this dependence is understood, we can prove that $\mathcal{N}$ admits a fixed point. To do this, it suffices to use  the  Schauder's fixed point Theorem, and  the following lemma.
\begin{lemma}\label{pfix.1}
	The functional $\mathcal{N}$ satisfies the following:
	\begin{itemize}
		\item[(a)] $\mathcal{N}$ is continuous and compact,
		\item[(b)] there is $R>0$ such that $\|\mathcal{N}(\overline{Y})\|_{L^2(0, T; \mathbb{H}^1)}\leq R$  for all $\overline{Y} \in L^2(0, T; \mathbb{H}^1).$
	\end{itemize}
\end{lemma}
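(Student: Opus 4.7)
For any $\overline{Y}\in L^2(0,T;\mathbb{H}^1)$, the Lipschitz hypotheses \eqref{Lip2.63}--\eqref{Lip2.64} yield uniform $L^\infty$ bounds $\|a\|_\infty,\|B\|_\infty\le L_F$ and $\|b\|_\infty,\|B_\Gamma\|_\infty\le L_G$, \emph{independently} of $\overline{Y}$. Plugging these into Theorem \ref{t5.6} produces a constant $K=K(T,\varepsilon,L_F,L_G,Y_1)$ such that $\|v(\overline{Y})\|_{L^2(\omega_T)}\le K$ for all $\overline{Y}$. A standard $\mathbb{H}^1$ energy estimate on \eqref{nl4.744}---multiply the two equations by $y$ and $y_\Gamma$, integrate, use ellipticity of $\mathcal{A},\mathcal{A}_\Gamma$ on the second-order terms, and absorb the first-order terms via Young's inequality together with the uniform $L^\infty$ bounds---then gives $\|Y\|_{L^2(0,T;\mathbb{H}^1)}\le R$ for some $R$ depending only on $\|Y_0\|_{\mathbb{L}^2}$, $K$, $L_F$, $L_G$ and $T$. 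This establishes (b).

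\textbf{Compactness in (a).} By (b) and the uniform $L^\infty$ coefficient bounds, the source $1_\omega v(\overline{Y})-D(t)Y$ lies in a fixed bounded subset of $L^2(0,T;\mathbb{L}^2)$. Analyticity of the semigroup $e^{tA}$ (Proposition \ref{t4.2}) together with maximal $L^2$-regularity then place $Y=\mathcal{N}(\overline{Y})$ in a bounded subset of $\mathbb{E}_1$ (we may take $Y_0=0$ without loss of generality, as noted in the linear case). Since $\mathbb{E}_1$ embeds compactly into $L^2(0,T;\mathbb{H}^1)$, as pointed out in Section \ref{WP0.1}, the image $\mathcal{N}\bigl(L^2(0,T;\mathbb{H}^1)\bigr)$ is relatively compact in $L^2(0,T;\mathbb{H}^1)$, which is the desired compactness.

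\textbf{Continuity in (a).} Take $\overline{Y}_n\to\overline{Y}$ in $L^2(0,T;\mathbb{H}^1)$; up to a subsequence, $\overline{Y}_n$ and $\nabla\overline{Y}_n$ converge a.e. Since $F,G\in C^1$, the induced coefficients $(a_n,b_n,B_n,B_{\Gamma,n})$ are uniformly bounded in $\mathbb{L}^\infty_T\times(\mathbb{L}^\infty_T)^N$ and converge a.e., hence weak-$*$, to $(a,b,B,B_\Gamma)$. By Proposition \ref{SEM.1}(a), $\widehat{\Phi}_{T,n}=\mathcal{M}(a_n,b_n,B_n,B_{\Gamma,n})$ is bounded in $\mathbb{L}^2$; extract a subsequence with $\widehat{\Phi}_{T,n}\rightharpoonup\Psi_T$. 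Proposition \ref{SEM.1}(b) then upgrades this to strong convergence of the associated adjoint solutions in $L^2(0,T;\mathbb{L}^2)$, so $v_n=1_\omega\widehat{\varphi}_n\to 1_\omega\Psi_\varphi$ strongly in $L^2(\omega_T)$. Continuous dependence of the forward linearized system \eqref{nl4.744} on the coefficients (weak-$*$) and on the control (strongly) yields $\mathcal{N}(\overline{Y}_n)\to Y$ in $L^2(0,T;\mathbb{H}^1)$; the whole-sequence convergence follows by the standard subsequence principle, once we identify $Y=\mathcal{N}(\overline{Y})$.

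\textbf{Main obstacle.} The delicate point is precisely this identification: one must show that the weak-$\mathbb{L}^2$ limit $\Psi_T$ of the minimizers $\widehat{\Phi}_{T,n}$ of the perturbed functionals $J^{\varepsilon}_{Y_1,\omega,n}$ coincides with $\mathcal{M}(a,b,B,B_\Gamma)$, the minimizer of the limit functional $J^{\varepsilon}_{Y_1,\omega}$. This is a $\Gamma$-convergence-flavoured step: starting from the minimality $J^{\varepsilon}_{Y_1,\omega,n}(\widehat{\Phi}_{T,n})\le J^{\varepsilon}_{Y_1,\omega,n}(\Psi)$ for arbitrary $\Psi\in\mathbb{L}^2$, one passes to the limit using (i) the uniform coercivity supplied by the observability constant in Theorem \ref{t5.7}, which depends only on $T,L_F,L_G$, (ii) the lower semicontinuity of the quadratic $L^2(\omega_T)$ term under weak convergence (a consequence of Proposition \ref{SEM.1}(b)), and (iii) strong convergence of the linear terms in $\Psi$. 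Strict convexity of $J^{\varepsilon}_{Y_1,\omega}$ (Lemma \ref{ll4.1}) then gives uniqueness of the minimizer and closes the argument. With (a) and (b) in place, Schauder's fixed point theorem applied to the closed ball of radius $R$ in $L^2(0,T;\mathbb{H}^1)$ furnishes the desired fixed point of $\mathcal{N}$.
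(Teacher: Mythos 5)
Your proposal is correct and follows essentially the same route as the paper: uniform control bounds from Theorem \ref{t5.6} via the Lipschitz constants, the energy estimate for (b), maximal regularity plus the compact embedding $\mathbb{E}_1\hookrightarrow L^2(0,T;\mathbb{H}^1)$ for compactness, and Proposition \ref{SEM.1} together with the minimality-passing-to-the-limit argument (the paper's inequalities \eqref{nl4.85}--\eqref{nl4.87}) to identify the limiting minimizer for continuity. The only differences are cosmetic: you make the subsequence principle and the identification step slightly more explicit than the paper does.
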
 
\begin{proof}
	Continuity of \,\,$\mathcal{N}:$  Take a sequence  $\overline{Y}_n =(\overline{y}_n, \overline{y}_{\Gamma,n})\longrightarrow  \overline{Y}=(\overline{y}, \overline{y}_{\Gamma})$ in \,\,$L^2(0, T; \mathbb{H}^1)$. Then the parameters  
	$\tilde{F}_1(\overline{y}_n)$, $\tilde{G}_1(\overline{y}_{\Gamma,n})$ $\tilde{F}_2(\overline{y}_n)$ and  $\tilde{G}_2(\overline{y}_{\Gamma,n})$ are such that
	\begin{align}
	&\tilde{F}_1(\overline{y}_{n})\longrightarrow \tilde{F}_1(\overline{y}) \,\,\text{in} \,\,L^2(\Omega_T), \,\, \text{and} \,\, \tilde{F}_2(\overline{y}_{n})\longrightarrow \tilde{F}_2(\overline{y}) \,\,\text{in} \,\,L^2(\Omega_T)\label{cv2.71},\\
	&\tilde{G}_1(\overline{y}_{n})\longrightarrow \tilde{G}_1(\overline{y}) \,\,\text{in} \,\,L^2(\Gamma_T), \,\, \text{and} \,\, \tilde{G}_2(\overline{y}_{n})\longrightarrow \tilde{G}_2(\overline{y}) \,\,\text{in} \,\,L^2(\Gamma_T)\label{cv2.72}.
	\end{align}
	
	Denote $a_n=\tilde{F}_1(\overline{y}_n)$, $b_n= \tilde{G}_1(\overline{y}_{\Gamma,n})$, $B_n=\tilde{F}_2(\overline{y}_n)$ and  $B_{\Gamma}=\tilde{F}_2(\overline{y}_n)$. One has
	\begin{equation}\label{nl4.788}
	\|a_n\|_{\infty}\leq L_F, \|B_n\|_{\infty}\leq L_F \,\, \|b_n\|_{\infty}\leq L_G \,\,\text{and} \,\,\|B_{n,\Gamma}\|_{\infty}\leq L_G, \quad \forall\, n\geq 0.
	\end{equation}
	From  Theorem \ref{t5.6} and \eqref{nl4.788}, we deduce that the corresponding controls $\hat{v}_n$ are uniformly bounded
	\begin{equation}\label{nl4.78}
	\|\hat{v}_n\|_{L^2(\omega_T)} \leq C,\,\,\,  \forall n\geq 1.
	\end{equation}
	Recall that $\hat{v}_n= \hat{\varphi}_n$, where $\hat{\Phi}_n=(\hat{\varphi}_n, \hat{\varphi}_{\Gamma,n})$
	is the solution of adjoint system  associated to  $a_n$, $b_n$, $B_n$,  $B_{\Gamma,n}$, and $\hat{\Phi}_{T,n}$,  the minimum of the functional 
	\begin{equation*}
	J_n(\Phi_{T})=J(a_n, b_n, B_n, B_{\Gamma,n}; \Phi_{T})= \frac{1}{2}\|\varphi_n\|^2_{L^2(\omega_T)} + \varepsilon\|\Phi_{T}\|_{\mathbb{L}^2}- \langle Y_1, \Phi_{T}\rangle_{\mathbb{L}^2}.
	\end{equation*}
	Here,  $\Phi_n=(\varphi_n, \varphi_{\Gamma,n})$
	is the solution of adjoint system with final data $\Phi_{T}$ and potentials $a_n$, $b_n$, $B_n$  and $B_{\Gamma,n}$.  From the first point of Proposition \ref{SEM.1}, it follows that
	\begin{equation}\label{nl4.80}
	\|\hat{\Phi}_{T,n}\|_{\mathbb{L}^2} \leq C,\,\,\,  \forall n\geq 1.
	\end{equation} 
	By extracting a subsequence, there exists $\hat{\Phi}_{T}\in\mathbb{L}^2$ such that
	\begin{equation}\label{nl4.822}
	\hat{\Phi}_{T,n}\rightharpoonup \hat{\Phi}_{T}\,\, \text{weakly in}\,\,\mathbb{L}^2.
	\end{equation}
	Using \eqref{cv2.71}, \eqref{cv2.72},  \eqref{nl4.822}  and the second point of Proposition \ref{SEM.1},
	\begin{equation}\label{nl4.83}
	\hat{\Phi}_{n}\longrightarrow \hat{\Phi}_{}\,\, \text{ strongly in}\,\,L^2(0,T;\mathbb{L}^2 ),
	\end{equation}
	where $\hat{\Phi}$ is the solution of the adjoint system associated to $\hat{\Phi}_{T}$, $a=\tilde{F}_1(\overline{y})$, $b=\tilde{G}_1(\overline{y}_\Gamma)$, $B=\tilde{F}_2(\overline{y})$ and $B=\tilde{G}_2(\overline{y})$. 
	In particular, we have
	\begin{equation}\label{nl4.994}
	\hat{v}_{n}\longrightarrow \hat{\varphi}\,\, \text{ strongly in}\,\,L^2(\omega_T ).
	\end{equation} 
	Let us show that $\hat{v}=\hat{\varphi}$ is the control with minimal norm associated to $ \overline{Y}=(\overline{y}, \overline{y}_{\Gamma}).$  To this end it suffices to prove that $\hat{\Phi}_{T}$ is the minimum of $J(a, b, B, B_{\Gamma};\cdot)$.\\
	But this is not difficult. Indeed, from \eqref{nl4.822} and \eqref{nl4.83},  we have 
	\begin{equation}\label{nl4.85}
	J(a, b, B, B_{\Gamma}; \hat{\Phi}_T)\leq \liminf_{n\rightarrow \infty} J(a_n, b_n, B_n, B_{\Gamma,n};\hat{\Phi}_{T,n}),
	\end{equation}
	and
	\begin{equation}\label{nl4.86}
	J(a, b, B, B_{\Gamma};\Psi_T)= \liminf_{n\rightarrow \infty} J(a_n, b_n, B_n, B_{\Gamma, n}; \Psi_{T}) \,\,\text{for all}\,\, \Psi_T\in  \mathbb{L}^2.
	\end{equation}
	By definition, one has
	\begin{equation}\label{nl4.87}
	J(a_n, b_n, B_n, B_{\Gamma,n};\hat{\Phi}_{n,T})\leq J(a_n, b_n, B_n, B_{\Gamma,n};\Psi_{T})\,\,\text{for all}\,\, \Psi_T\in  \mathbb{L}^2.
	\end{equation} 
	By \eqref{nl4.85}, \eqref{nl4.86} and \eqref{nl4.87}, we get
	$$  J(a, b, B, B_{\Gamma};\hat{\Phi}_{T})\leq J(a, b, B, B_{\Gamma};\Psi_{T}) \,\,\text{for all}\,\, \Psi_T\in  \mathbb{L}^2.  $$ 
	Finally, \eqref{nl4.994} implies
	\begin{equation*}
	Y(\hat{v}_{n})\longrightarrow Y(\hat{v})\,\, \text{ strongly in} \,\,L^2(0, T; \mathbb{H}^1).
	\end{equation*}
	The proof of continuity of $\mathcal{N}$ is then completed.  To show the 	compactness of $\mathcal{N}$, 
	let $B$ be a bounded set of $L^2(0, T; \mathbb{H}^1)$.  From what precede,  it is easy to see that $\mathcal{N}(B)\subset L^2(0, T; \mathbb{H}^1)$. We are going to show that $\mathcal{N}(B)$ is relatively compact in $L^2(0, T; \mathbb{H}^1)$.
	To this end, let us rewrite the solution of \eqref{nl4.744} as 
	$$Y= P - Q,$$
	where $P(t)= e^{tA}Y_0$ and $\displaystyle Q(t) =\int_{0}^{t}e^{(t-s)A} H_{\overline{Y}}(s)ds,$
	with $$H_{\overline{Y}}=(-1_\omega v + \tilde{ F}_1(\overline{y})y +\tilde{ F}_2(\overline{y})\cdot \nabla y, \tilde{ G}_1(\overline{y}_\Gamma)y_\Gamma +\tilde{ G}_2(\overline{y_{\Gamma}})\cdot \nabla_{\Gamma} y_{\Gamma}),$$
	which is uniformly bounded in $L^2(0, T; \mathbb{H}^1)$.
	Obviously $P$ is a fixed element in  $L^2(0, T; \mathbb{H}^1)$, and by the analyticity of the semigroup $(e^{tA})_{t\geq 0}$ and {\cite[Theorem 2.3]{Di'84}} we get that $Q$ lies in a bounded set of $\mathbb{E}_1$ which is  a relatively compact set in  $L^2(0, T; \mathbb{H}^1)$. This completes the proof of the compactness of $\mathcal{N}$. Show the 
	boundedness of the range of $\mathcal{N}$. 
	Theorem \ref{t5.6} shows that there exists $C>0$ such that 
	\begin{equation}
	\|v(\overline{Y})\|_{L^2(\omega_T)}\leq C.
	\end{equation}
	Energy estimate \eqref{4.3} and this last inequality show  that
	$$\|\mathcal{N}(\overline{Y})\|_{L^2(0, T; \mathbb{H}^1)} \leq C,$$
	where the constant $C$  depends only
	on the Lipschitz constants $L_F$, $L_G$ and the initial data $Y_0$, but  independent on $\overline{Y}.$  \qed
\end{proof}

In view of results  of Lemma \ref{pfix.1} and as a consequence of Schauder's fixed point theorem, we deduce the existence of a fixed point of $\mathcal{N}$. Namely, we have the following result.
\begin{corollary}
	The nonlinear functional $\mathcal{N}$ admits a fixed point, and the system \eqref{nl2.62} is approximately controllable.
\end{corollary}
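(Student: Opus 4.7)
The plan is to apply Schauder's fixed point theorem to the nonlinear mapping $\mathcal{N}: L^2(0, T; \mathbb{H}^1) \longrightarrow L^2(0, T; \mathbb{H}^1)$ and then interpret the resulting fixed point as the target of the desired controllability assertion. The two Lemma \ref{pfix.1} properties already do the heavy lifting: part (a) tells us that $\mathcal{N}$ is continuous and compact, and part (b) produces a uniform radius $R > 0$ with $\|\mathcal{N}(\overline{Y})\|_{L^2(0,T;\mathbb{H}^1)} \leq R$ for every $\overline{Y}$.

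The first step is therefore to introduce the closed ball
\[
B_R = \{ \overline{Y} \in L^2(0, T; \mathbb{H}^1) : \|\overline{Y}\|_{L^2(0, T; \mathbb{H}^1)} \leq R \},
\]
which is convex, closed, and bounded in the Hilbert space $L^2(0, T; \mathbb{H}^1)$. By (b), $\mathcal{N}(B_R) \subset B_R$, and by (a) the restriction $\mathcal{N}|_{B_R}$ is continuous with relatively compact image. Schauder's theorem then yields some $\widehat{Y} = (\widehat{y}, \widehat{y}_\Gamma) \in B_R$ with $\mathcal{N}(\widehat{Y}) = \widehat{Y}$.

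Next, I translate this fixed point into a control for \eqref{nl2.62}. By construction, letting $a = \tilde{F}_1(\widehat{y})$, $b = \tilde{G}_1(\widehat{y}_\Gamma)$, $B = \tilde{F}_2(\widehat{y})$ and $B_\Gamma = \tilde{G}_2(\widehat{y}_\Gamma)$, the control $\widehat{v} = v(\widehat{Y})$ supplied by the linear theory (Theorem \ref{lem2.5}) drives the linearized system \eqref{nl4.744} to a state $Y(T, \widehat{v})$ satisfying $\|Y(T, \widehat{v}) - Y_1\|_{\mathbb{L}^2} \leq \varepsilon$, and the associated trajectory is $Y = \mathcal{N}(\widehat{Y}) = \widehat{Y}$. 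Because of the Zuazua--Fabre decomposition
\[
\tilde{F}_1(\widehat{y})\widehat{y} + \tilde{F}_2(\widehat{y}) \cdot \nabla \widehat{y} = F(\widehat{y}, \nabla \widehat{y}),\qquad \tilde{G}_1(\widehat{y}_\Gamma)\widehat{y}_\Gamma + \tilde{G}_2(\widehat{y}_\Gamma) \cdot \nabla_\Gamma \widehat{y}_\Gamma = G(\widehat{y}_\Gamma, \nabla_\Gamma \widehat{y}_\Gamma),
\]
the linear system with these frozen coefficients coincides pointwise with \eqref{nl2.62} along the fixed-point trajectory. Hence $\widehat{Y}$ is a weak solution of the semilinear problem \eqref{nl2.62} driven by $\widehat{v}$, and it satisfies the approximate controllability bound $\|\widehat{Y}(T) - Y_1\|_{\mathbb{L}^2} \leq \varepsilon$.

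Since Lemma \ref{pfix.1} has been granted, the only conceptual subtlety is the last identification step: I must make sure that the self-mapping character of $\mathcal{N}$ together with the Zuazua--Fabre factorization really lets the linear solution be read as a semilinear one. This is immediate from the definitions of $\tilde{F}_i, \tilde{G}_i$ provided in \cite{Zucnl} at any point $\widehat{Y} = \mathcal{N}(\widehat{Y})$, so no additional estimate is required. I expect the most delicate ingredient to have been the compactness of $\mathcal{N}$ in Lemma \ref{pfix.1}, which uses analyticity of $(e^{tA})_{t\geq 0}$ together with the compact embedding $\mathbb{E}_1 \hookrightarrow L^2(0, T; \mathbb{H}^1)$; here the remaining argument is a direct application of Schauder's theorem plus a change of viewpoint, with no further technical obstacle.
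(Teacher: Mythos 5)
Your proposal is correct and follows essentially the same route as the paper: the paper likewise deduces the corollary by applying Schauder's fixed point theorem to $\mathcal{N}$ on the basis of the continuity, compactness, and uniform boundedness established in Lemma \ref{pfix.1}, and then identifies the fixed point's trajectory with a solution of the semilinear system \eqref{nl2.62} via the Zuazua--Fabre factorization. Your explicit construction of the invariant ball $B_R$ and the spelled-out identification step are just slightly more detailed versions of what the paper leaves implicit.
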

We are now ready to deal with the cost of approximate controllability issue. Consider, as in the linear case, the admissible set
\begin{align*}
&\mathcal{V}_{ad} (0,Y_1,\varepsilon,\omega)=\\
&\big\{ v\in L^2(\omega_T) :   \text{the solution}\,\,  Y \text{of} \,\,\eqref{nl2.62}\,\,\text{with}\, Y_0=0\,\text{satisfies}\,\,\,\|Y(T)-Y_1\|_{\mathbb{L}^2}\leq \varepsilon\big\},
\end{align*}
and the cost of approximate controllability
$$\mathcal{C}_1(0,Y_1,\varepsilon, \omega)= \inf_{v\in \mathcal{V}_{ad} (0,Y_1,\varepsilon,\omega)}\|v\|_{L^2(\omega_T)}.$$
By using Theorem \ref{t5.6}, \eqref{Lip2.63} and \eqref{Lip2.64} we can deduce  the following result.

\begin{theorem}
	For any target $Y_1  =(y_1, y_{\Gamma,1})\in\mathbb{H}^2$, $T>0$, $\varepsilon>0$, F and G satisfying  \eqref{Lip2.63} and \eqref{Lip2.63}, one has
	\small
	\begin{equation*}
	\mathcal{C}_1(0, Y_1,\varepsilon, \omega)\leq \exp\Big(C\big(N_1(T, L_F, L_G) +  \frac{1}{\epsilon} M_1(L_F, L_G, Y_1)  \Big)\|Y_1\|_{\mathbb{L}^2},
	\end{equation*}
	where 
	\begin{align*}
	N_1(T, L_F, L_G)&= 1 +\frac{1}{T}+ T(\|L_F\|_{L^{\infty}} + \|L_G\|_{L^{\infty}} +\|L_F\|^2_{L^{\infty}} +\|L_G\|^2_{L^{\infty}})\\
	&+ \|L_F\|^{\frac{2}{3}}_{L^{\infty}} +\|L_G\|^{\frac{2}{3}}_{L^{\infty}}+\|L_F\|^2_{L^{\infty}} +\|L_G\|^2_{L^{\infty}}
	\end{align*}
	and 
	\begin{align*}
	M_1(L_F, L_G, Y_1)&=  \|L_F\|_{L^{\infty}} + \|L_G\|_{L^{\infty}} +\|L_F\|^2_{L^{\infty}} +\|L_G\|^2_{L^{\infty}}+ \\
	&(\|L_F\|_{L^{\infty}} + \|L_G\|_{L^{\infty}})\|Y_1\|_{\mathbb{L}^2}+ (\|L_F\|_{L^{\infty}} + \|L_G\|_{L^{\infty}})\|Y_1\|_{\mathbb{H}^1} +\|Y_1\|_{\mathbb{H}^2}.
	\end{align*}
\end{theorem}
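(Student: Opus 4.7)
The plan is to reduce the semilinear bound to the linear estimate in Theorem \ref{t5.6} via the fixed-point construction already worked out in Lemma \ref{pfix.1}. First I would recall that the semilinear problem \eqref{nl2.62} has been rewritten in the quasilinear form \eqref{1.72} with bounded coefficients $\tilde{F}_1,\tilde{F}_2,\tilde{G}_1,\tilde{G}_2$ enjoying the uniform $L^\infty$ bounds $\|\tilde{F}_i(\cdot)\|_\infty\le L_F$ and $\|\tilde{G}_i(\cdot)\|_\infty\le L_G$ for $i=1,2$. By Schauder's theorem, the nonlinear map $\mathcal{N}$ admits a fixed point $\widehat{Y}=(\widehat{y},\widehat{y}_\Gamma)\in L^2(0,T;\mathbb{H}^1)$, and the associated optimal control $\widehat{v}=v(\widehat{Y})$ drives the linearized system \eqref{nl4.744} (with coefficients $a=\tilde{F}_1(\widehat{y})$, $B=\tilde{F}_2(\widehat{y})$, $b=\tilde{G}_1(\widehat{y}_\Gamma)$, $B_\Gamma=\tilde{G}_2(\widehat{y}_\Gamma)$) to a state $Y$ with $\|Y(T)-Y_1\|_{\mathbb{L}^2}\le\varepsilon$. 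Crucially, since $\widehat{Y}$ is a fixed point, this $Y$ is also a solution of the \emph{semilinear} problem, so $\widehat{v}\in\mathcal{V}_{ad}(0,Y_1,\varepsilon,\omega)$ and therefore $\mathcal{C}_1(0,Y_1,\varepsilon,\omega)\le\|\widehat{v}\|_{L^2(\omega_T)}$.

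Next, I would apply Theorem \ref{t5.6} to the linearized system at the fixed point. Since the coefficients satisfy
\[
\|a\|_\infty,\,\|B\|_\infty\le L_F,\qquad \|b\|_\infty,\,\|B_\Gamma\|_\infty\le L_G,
\]
substituting these bounds into the explicit expressions for $N(T,a,b,B,B_\Gamma)$ and $M(a,b,B,B_\Gamma,Y_1)$ from Theorem \ref{t5.6} yields precisely the quantities $N_1(T,L_F,L_G)$ and $M_1(L_F,L_G,Y_1)$ appearing in the statement. Hence
\[
\|\widehat{v}\|_{L^2(\omega_T)}\le \exp\!\Big(C\big(N_1(T,L_F,L_G)+\tfrac{1}{\varepsilon}M_1(L_F,L_G,Y_1)\big)\Big)\|Y_1\|_{\mathbb{L}^2},
\]
which gives the desired bound on $\mathcal{C}_1(0,Y_1,\varepsilon,\omega)$.

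The main conceptual obstacle is just making sure the fixed-point argument produces a control whose norm is controlled \emph{uniformly} by the linear estimate, and this has already been built into Lemma \ref{pfix.1} (see in particular \eqref{nl4.78} and \eqref{nl4.80}), which is why Schauder's theorem applies in the first place. So the actual writing reduces to (i) recording the existence of the fixed point, (ii) noting that its associated minimal-norm control is admissible for \eqref{nl2.62}, and (iii) invoking Theorem \ref{t5.6} with the substitutions $\|a\|_\infty,\|B\|_\infty\le L_F$, $\|b\|_\infty,\|B_\Gamma\|_\infty\le L_G$. No new Carleman or compactness estimate is needed; the whole analytic content was already developed in the linear section and in the continuity/compactness analysis of $\mathcal{N}$.
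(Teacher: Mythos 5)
Your proposal is correct and is essentially the argument the paper intends (the paper states the result follows from Theorem \ref{t5.6} together with \eqref{Lip2.63}--\eqref{Lip2.64} without writing out details): one exhibits the fixed-point control $\widehat{v}$, observes it is admissible for the semilinear problem, and bounds its norm by the linear estimate with the frozen coefficients, using monotonicity of the bound in $\|a\|_\infty,\|b\|_\infty,\|B\|_\infty,\|B_\Gamma\|_\infty$ to replace them by $L_F$ and $L_G$. No gaps.
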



\end{document}